\newcommand{\kibitz}[2]{\ifnum\Comments=1\textcolor{#1}{#2}\fi}
\newtheoremstyle{normal}
{2ex}               
{3ex}               
{}                  
{}                  
{\bfseries} 
{}                  
{2pt}   
{\thmname{#1}\thmnumber{ #2.} \thmnote{(#3)}}
\newtheoremstyle{italic}
{2ex}
{3ex}
{\itshape}
{}
{\bfseries} 
{}
{2pt}
{\thmname{#1}\thmnumber{ #2.} \thmnote{(#3)}}
\theoremstyle{normal}
\newtheorem{definition}{Definition}[section]
\newtheorem{remark}[definition]{Remark}
\newtheorem{example}[definition]{Example}
\newtheorem{condition}[definition]{Condition}
\theoremstyle{italic}
\newtheorem{theorem}[definition]{Theorem}
\newtheorem{lemma}[definition]{Lemma}
\newtheorem{prop}[definition]{Proposition}
\newtheorem{corollary}[definition]{Corollary}
\newcommand\N{\mathbb{N}}
\newcommand\R{\mathbb{R}}
\newcommand\Z{\mathbb{Z}}
\newcommand\eps{\varepsilon}
\newcommand{\vect}{\bm }  
\newcommand\Prob{\mathbb{P}}    
\newcommand\Exp{\mathbb{E}}     
\newcommand\ind{\mathds{1}}     
\newcommand\Ec{\mathcal{E}}
\newcommand\Bc{\mathcal{B}}
\newcommand\Dc{\mathcal{D}}
\newcommand\Ab{\mathbb{A}}
\newcommand\Bb{\mathbb{B}}
\newcommand\Cb{\mathbb{C}}
\newcommand\Fb{\mathbb{F}}
\DeclareMathOperator{\Cov}{Cov}
\DeclareMathOperator{\const}{const}
\DeclareMathOperator{\cum}{cum}
\newcommand{\ec}{C_n}
\newcommand\weak{\rightsquigarrow}
\newcommand\bbb{ {c} }
\newcommand\dimi{ {j} }
\newcommand\udimi{ {u^{(\dimi)} } }
\begin{document}

\title{Weak convergence of the empirical copula process with respect to weighted metrics}

\author{Betina Berghaus, Axel B\"ucher and Stanislav Volgushev\footnote{Ruhr-Universit\"at Bochum,
Fakult\"at f\"ur Mathematik, 
Universit\"atsstr.~150, 44780 Bochum, Germany. 
{E-mail:} betina.berghaus@rub.de, axel.buecher@rub.de, stanislav.volgushev@rub.de. 
This work has been supported by the Collaborative Research Center ``Statistical modeling of nonlinear dynamic processes'' (SFB 823) of the German Research Foundation (DFG) which is gratefully acknowledged.} \smallskip\\ 
}

\maketitle

\begin{abstract}
The empirical copula process plays a central role in the asymptotic analysis of many statistical procedures which are based on copulas or ranks. Among other applications, results regarding its weak convergence can be used to develop asymptotic theory for estimators of dependence measures or copula densities, they allow to derive tests for stochastic independence or specific copula structures, or they may serve as a fundamental tool for the analysis of multivariate rank statistics.
In the present paper, we establish weak convergence of the empirical copula process (for observations that are allowed to be serially dependent) with respect to weighted supremum distances. The usefulness of our results is illustrated by applications to general bivariate rank statistics and to estimation procedures for the Pickands dependence function arising in multivariate extreme-value theory.
\end{abstract}


\noindent \textit{Keywords and Phrases:}  Empirical copula process; weighted weak convergence; strongly mixing; bivariate rank statistics; Pickands dependence function.

\smallskip

\noindent \textit{AMS Subject Classification:} 
62G30, 
60F17. 



\section{Introduction}
\def\theequation{1.\arabic{equation}}
\setcounter{equation}{0}

The theory of weak convergence of empirical processes can be regarded as one of the most powerful tools in mathematical statistics. Through the continuous mapping theorem or the functional delta method, it greatly facilitates the development of asymptotic theory in a vast variety of situations \citep{VanWel96}.

For applying the continuous mapping theorem or the functional delta method, the course of action is often similar. Consider for instance the continuous mapping theorem: starting from some abstract weak convergence result, say $\Fb_n \weak \Fb$ in some metric space $(\Dc,d_\Dc)$, one would like to deduce  weak convergence of $\phi(\Fb_n) \weak \phi(\Fb)$, where $\phi$ is some mapping defined on $(\Dc,d_\Dc)$ with values in another metric space $(\Ec,d_\Ec)$.  This conclusion is possible provided $\phi$ is continuous at every point of a set which contains the limit $\Fb$, almost surely \citep{VanWel96}. 

The continuity of $\phi$ is linked to the strength of the metric $d_\Dc$ -- a stronger metric will make more functions continuous. For example, let $\Dc=\ell^\infty([0,1])$ denote the space of bounded functions on $[0,1]$ and consider the real-valued functional $\phi(f) := \int_{(0,1)} f(x)/x \, dx$ (with $\phi$ defined on a suitable subspace of~$\Dc$). In Section~\ref{subsec:pick} below, this functional will turn out to be of great interest for the estimation of Pickands dependence function and it is also closely related to the classical Anderson-Darling statistic. Now, if we equip~$\Dc$ with the supremum distance, as is typically done in empirical process theory, the map $\phi$ is not continuous because $1/x$ is not integrable. Continuity of $\phi$ can be ensured by considering a weighted distance, such as for instance $\sup_{x\in [0,1]} |f_1(x)-f_2(x)|/g(x)$ for a positive weight function $g$ such that $ g(x)/x$ is integrable. Similar phenomenas arise with the functional delta method, see \cite{BeuZah10}. It thus is desirable to establish weak convergence results with the metric $d_\Dc$ taken as strong as possible. One class of metrics which is of particular interest in many statistical applications is given by weighted supremum distances.

For classical empirical processes, corresponding weak convergence results are well known. For example, 
the standard $d$-dimen\-sional empirical process $\Fb_n(\vect x)=\sqrt n \{ F_n(\vect x) - F(\vect x) \}$ with $F$ having standard uniform marginals, 
converges weakly with respect to the metric induced by the weighted norm 
\[
\| G \|_{\omega} = \sup_{\vect u \in [0,1]^d} \left| \frac{G(\vect u)}{ \{ g(\vect u) \}^\omega} \right|, \quad g(\vect u)= \big(\min_{j=1}^d u_j \big) \wedge \big(1-  \min_{j=1}^d u_j \big),
\]
$\omega\in(0,1/2)$. 
See, e.g., \cite{ShoWel86} for the one-dimensional i.i.d.-case, \cite{ShaYu96} for the one-dimensional time series case or \cite{GenSeg09} for the bivariate i.i.d.-case.
For $d=2$, the graph of the function $g$ is depicted in Figure~\ref{fig:level}.

\begin{figure}[h!]
\vspace{-1cm} 
\hspace{-1cm}
\includegraphics[width=0.62\textwidth]{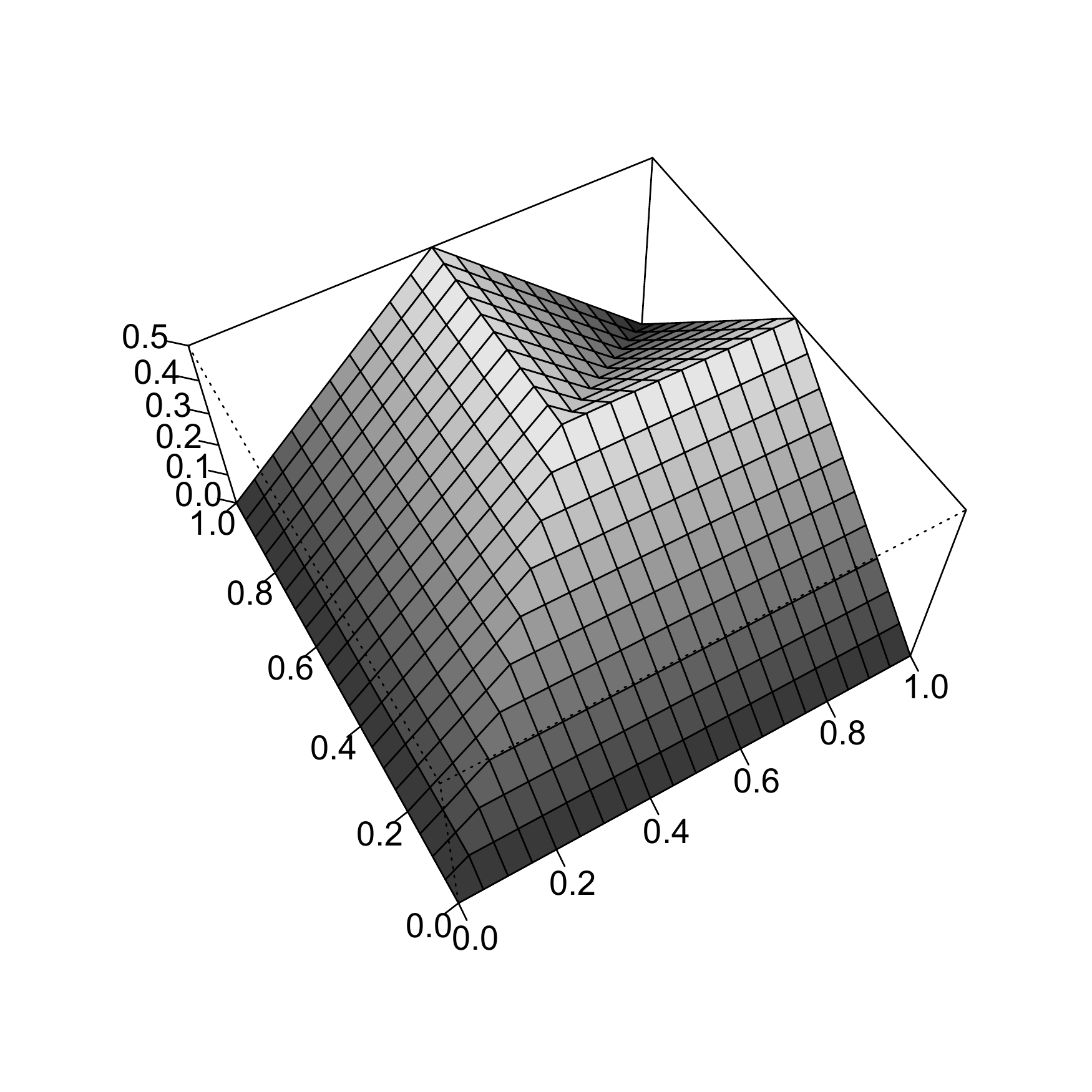} \hspace{-1.9cm}
\includegraphics[width=0.62\textwidth]{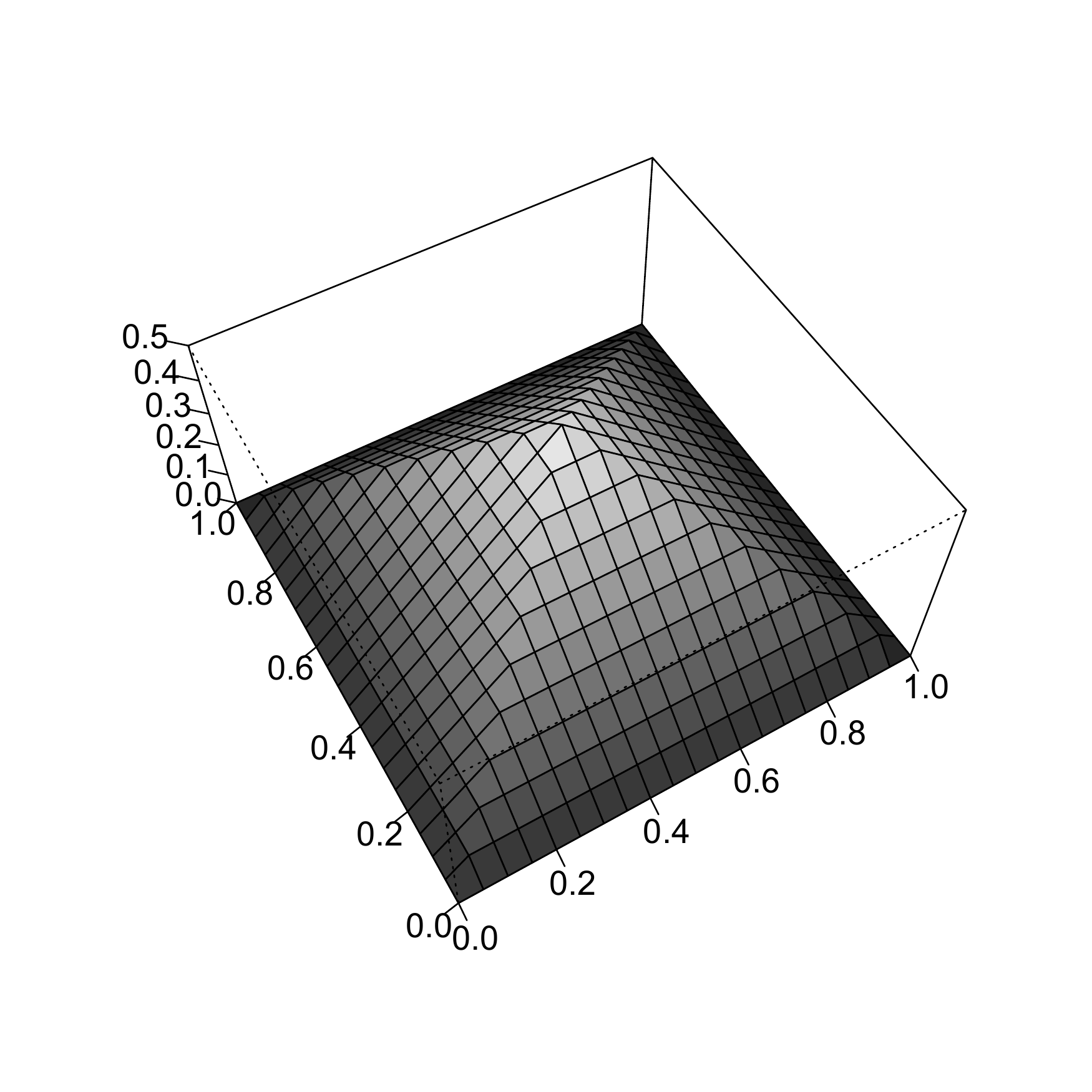}
\vspace{-1.7cm}  
\caption{\emph{Graphs of $g(u,v) = \min\{ u,v,1-\min(u,v)\}$ 
   (left picture) and of $\tilde g(u,v) = \min\{u ,v , (1-u),(1-v)\}$ (right picture).
   }} \label{fig:level}
   \vspace{-.3cm}
\end{figure}

The present paper is motivated by the apparent lack of such results for the empirical copula process $\hat \Cb_n$.
This process, an element of $D([0,1]^d)$ precisely defined in Section~\ref{sec:weighted} below, plays a crucial role in the asymptotic analysis of statistical procedures which are based on copulas or ranks. 
Unweighted weak convergence of $\hat \Cb_n$ has been investigated by several authors under a variety of assumptions on the smoothness of the copula and on the temporal dependence of the underlying observations, see \cite{GanStu87, FerRadWeg04,Seg12,BucVol13, BucSegVol14}, among others. However, results regarding its weighted weak convergence are almost non-existent. To the best of our knowledge, the only reference appears to be \cite{Rus76}, where, however, weight functions are only allowed to approach zero at the lower boundary of the unit cube. The restrictiveness of this condition becomes particularly visible in dimension $d=2$ where it is known that the limit of the empirical copula process is zero on the entire boundary of the unit square \citep{GenSeg10}. This observation suggests that, for $d=2$, it should be possible to maintain weak convergence of the empirical copula process when dividing by functions of the form $\{\tilde g(u,v)\}^\omega$ where 
\[
\tilde g(u,v) = u\wedge v \wedge (1-u) \wedge (1-v).
\] 
A picture of the graph of $\tilde g$ can be found in Figure~\ref{fig:level}, obviously, we have $\tilde g \le g$. The main result of this paper confirms the last-mentioned conjecture. More precisely, we establish weighted weak convergence of the empirical copula process in general dimension $d\ge 2$ with weight functions that approach zero wherever the potential limit approaches zero. We also do not require the observations to be i.i.d. and allow for exponential alpha mixing. 

Potential applications of the new weighted weak convergence results are extensive. As a direct corollary, one can derive the asymptotic behavior of Anderson-Darling type goodness-of-fit statistics for copulas. The derivation of the asymptotic behavior of rank-based estimators for the Pickands dependence functions \citep{GenSeg09} can be greatly simplified and, moreover, can be simply extended to time series observations. Through a suitable partial integration formula, the results can also be exploited to derive weak convergence of multivariate rank statistics as for instance of certain scalar measures of (serial) dependence. The latter two applications are worked out in detail in Section~\ref{sec:app} of this paper.

The remaining part of this paper is organized as follows. In Section~\ref{sec:weighted}, the empirical copula process is introduced and the main result of the paper, its weighted weak convergence, is stated. In Section~\ref{sec:app}, the main result is illustratively exploited to derive the asymptotics of multivariate rank statistics and of common estimators for extreme-value copulas. All proofs are deferred to Section~\ref{sec:proofs}, with some auxiliary results postponed to Section~\ref{sec:aux}. Finally, Appendix~\ref{sec:integral} in the supplementary material contains some general results on (locally) bounded variation and integration for two-variate functions which are needed for the proof of Theorem~\ref{theo:score}.

\section{Weighted empirical copula processes} \label{sec:weighted}
\def\theequation{2.\arabic{equation}}
\setcounter{equation}{0}
 
Let $\vect X=(X_1, \dots, X_d)'$ be a $d$-dimensional random vector with joint cumulative distribution function (c.d.f.)~$F$ and continuous marginal c.d.f.s $F_1, \dots ,F_d$. The copula $C$ of $F$, or, equivalently, the copula of $\vect X$, is defined as the c.d.f.\ of the random vector $\vect U = (U_1, \dots, U_d)'$ that arises from marginal application of the probability integral transform, i.e., $U_\dimi=F_\dimi(X_\dimi)$ for  $\dimi=1, \dots, d$. By construction, the marginal c.d.f.s of $C$ are standard uniform on $[0,1]$.
By Sklar's Theorem, $C$ is the unique function for which we have
\[
  F(x_1, \dots, x_d) = C \{ F_1(x_1), \dots, F_d(x_d) \}
\]
for all $\vect x =(x_1, \dots, x_d) \in \R^d$.

Let $\vect X_i, i=1, \dots, n$ be an observed stretch of a strictly stationary time series such that $\vect X_i$ is equal in distribution to $\vect X$. Set $\vect U_i=(U_{i1}, \dots, U_{id})\sim C$  with $U_{i\dimi} = F_\dimi(X_{i\dimi})$.
Define (observable) pseudo observations $\hat{\vect U}_i=(\hat U_{i1}, \dots , \hat U_{id})$ of $C$ through $\hat U_{i \dimi}= n F_{n \dimi}(X_{i\dimi})(/n+1)$ for $i=1, \dots , n$ and $\dimi=1, \dots ,d$. The empirical copula $\hat C_n$ of the sample $\vect X_1, \dots , \vect X_n$ is defined as the empirical distribution function of $\hat{\vect U}_1, \dots , \hat{\vect U}_n$, i.e.,
\[
\hat C_n(\vect u  )= \frac{1}{n}\sum_{i=1}^n \ind (\hat{\vect U}_i \leq \vect u), \qquad \vect u \in [0,1]^d.
\]
The corresponding empirical copula process is defined as 
\[
\vect u \mapsto \hat \Cb_n(\vect u) = \sqrt n \{ \hat C_n (\vect u) - C(\vect u) \}.
\]
For $\omega\ge 0$, define a weight function
\[
g_\omega(\vect u) = 
\min\{ \wedge_{\dimi=1}^d u_\dimi,  \wedge_{\dimi=1}^d [1-(u_1 \wedge \dots \wedge \widehat {u_\dimi} \wedge \dots \wedge u_d)]\}^\omega,
\]
where the hat-notation $u_1 \wedge \dots \wedge \widehat {u_\dimi} \wedge \dots \wedge u_d$ is used as a shorthand for $\min\{u_1, \dots, u_{j-1}, u_{j+1}, \dots, u_d\}$. For $d=2$, the function is particularly nice and reduces to $
g_\omega(u_1, u_2) = \min(u_1, u_2, 1-u_1, 1-u_2)^\omega 
$, see Figure~\ref{fig:level}. Note that for vectors $\vect u \in [0,1]^d$ such that at least one coordinate is equal to $0$ or such that $d-1$ coordinates are equal to~$1$, we have  $g_\omega(\vect u) = 0$. As already mentioned in the introduction for the case $d=2$, these vectors are exactly the points where the limit of the empirical copula process is equal to $0$, almost surely, whence one might hope to obtain a weak convergence result for $\Cb_n/g_\omega$. To prove such a result, a smoothness condition on $C$ has to be imposed.

\begin{condition} \label{cond:2nd}
For every $\dimi \in\{1, \dots, d\}$, the first oder partial derivative $\dot C_\dimi(\vect u) := \partial C(\vect u)/\partial u_\dimi$ exists and is continuous on $V_\dimi=\{ \vect u \in [0,1]^d: u_\dimi \in (0,1) \}$.
For every $\dimi_2, \dimi_2 \in \{1, \dots, d\}$, 
the second order partial derivative $\ddot C_{\dimi_1 \dimi_2}(\vect u) := \partial^2 C(\vect u)/\partial u_{\dimi_1}\partial u_{\dimi_2}$ exists and is continuous on $V_{\dimi_1} \cap V_{\dimi_2}$.  Moreover, there exists a constant $K>0$ such that
\[
	| \ddot C_{\dimi_1\dimi_2}(\vect u) | \le  K \min \left\{ \frac{1}{u_{\dimi_1}(1-u_{\dimi_1})}, \frac{1}{u_{\dimi_2}(1-u_{\dimi_2})} \right\}, \qquad \forall\, \vect u \in V_{\dimi_1} \cap V_{\dimi_2}.
\]
\end{condition}
For completeness, define $\dot C_j(\vect u) = \limsup_{h \to 0} \{ C(\vect u + h \vect e_j) - C(\vect u) \} / h$ wherever it does not exist.
Note, that Condition~\ref{cond:2nd} coincides with Con\-dition~2.1 and Condition 4.1 in \cite{Seg12}, who used it to prove Stute's representation of an almost sure remainder term \citep{Stu84}. The condition is satisfied for many commonly occurring copulas \citep{Seg12}.

For $-\infty \le a < b \le \infty$, let $\mathcal{F}_a^b$ denote the sigma-field generated by those $\vect X_i$ for which $i \in\{a, a+1, \dots, b\}$ and define, for $k\ge 1$,
\begin{align*}
  \alpha^{\scriptscriptstyle [\vect X]}(k)
  = 
  \sup \left\{ \vert \mathbb{P}(A \cap B) - \mathbb{P}(A) \mathbb{P} (B) \vert : 
  {A \in \mathcal{F}_{-\infty}^i,  B \in \mathcal{F}_{i+k}^\infty}, i \in \Z \right\}
\end{align*}
as the alpha-mixing coefficient of the time series $(\vect X_i)_{i\in \Z}$. The sequence is called strongly mixing (or alpha-mixing) if $\alpha^{\scriptscriptstyle [\vect X]}(k) \to 0$ for $k\to\infty$.  Finally, 
\[
\alpha_n(\vect u) = \sqrt n \{ G_n(\vect u) - C(\vect u) \}, \qquad G_n(\vect u) =  n^{-1} {\textstyle \sum_{i=1}^n} \ind( \vect U_i \le \vect u),
\]
denotes the (unobservable) empirical process based on $\vect U_1, \dots, \vect U_n$.

\begin{theorem}[Weighted weak convergence of the empirical copula process] \label{theo:weightalpha}
Suppose that $\vect  X_1, \vect X_2, \dots$ is a stationary, alpha-mixing sequence with $\alpha^{\scriptscriptstyle [\vect X]}(k )=O(a^k)$, as $k \to \infty$, for some $a \in (0,1)$. If the marginals of the stationary distribution are continuous and if the corresponding copula $C$ satisfies Condition~\ref{cond:2nd}, then, for any $\bbb \in (0,1)$ and any $\omega\in(0,1/2)$, 
\[
\sup_{\vect u \in [\frac{\bbb}{n}, 1-\frac{\bbb}{n}]^d} \left| 
\frac{ \hat \Cb_n(\vect u)}{g_\omega(\vect u) } - \frac{\bar \Cb_n(\vect u) } { g_\omega(\vect u) } 
\right| =o_P(1)
\]
where, for any $\vect u \in [0,1]^d$, 
\[
  \bar \Cb_n(\vect u) := \alpha_n(\vect u) - \sum_{\dimi=1}^d  \dot C_\dimi(\vect u) \alpha_n(\vect \udimi ),
\]
with $\vect \udimi = (1, \dots, 1 , u_j, 1, \dots, 1)$. Moreover,  we have
$
\bar \Cb_n/ \tilde g_\omega
\weak 
\Cb_C /  \tilde g_\omega
$
in $(\ell^\infty([0,1]^d), \| \cdot \|_\infty)$, where $\tilde g_\omega(\vect u) = g_\omega(\vect u) + \ind\{g_\omega(\vect u) = 0\}$, where
\[
\Cb_C(\vect u) = \alpha_C(\vect u) - \sum_{\dimi=1}^d  \dot C_\dimi(\vect u) \alpha_C(\vect \udimi ),
\]
and where $\alpha_C$ denotes a tight, centered Gaussian process with covariance
\[
\Cov\{ \alpha_C(\vect u), \alpha_C(\vect v) \} 
= 
\sum\nolimits_{i\in \Z} \Cov \{ \ind( \vect U_0 \le u), \ind( \vect U_i \le v) \}.
\]
\end{theorem}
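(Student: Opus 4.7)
The statement bundles two essentially independent assertions: (i) a weighted uniform approximation of $\hat \Cb_n$ by $\bar \Cb_n$ on the interior region $[c/n, 1-c/n]^d$, and (ii) weighted weak convergence of $\bar \Cb_n/\tilde g_\omega$ on the full cube. For (i), I would begin from the Stute-type decomposition used by \cite{Seg12}, writing (after scaling by $\sqrt n$) $\hat \Cb_n = \bar \Cb_n + \sqrt n R_n$, and reduce the problem to $\sup \sqrt n |R_n(\vect u)|/g_\omega(\vect u) = o_P(1)$. The remainder splits into a second-order Taylor piece (involving $\ddot C_{j_1 j_2}$ at intermediate points) and a lower-order inversion piece arising from the approximation of the marginal quantile functions. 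To control the dominant Taylor piece I would use the sharper bound $|\ddot C_{j_1 j_2}(\vect u)| \le K/\sqrt{u_{j_1}(1-u_{j_1}) u_{j_2}(1-u_{j_2})}$, obtained as the geometric mean of the two estimates in Condition~\ref{cond:2nd}, combined with the weighted one-dimensional modulus estimate $\sup_{u \in [c/n, 1-c/n]} |\alpha_n(\vect u^{(j)})|/\{u(1-u)\}^{1/2-\eta} = O_P(1)$, valid under exponential $\alpha$-mixing for arbitrarily small $\eta > 0$ (a Shao--Yu type result). Balancing the exponents then gives $\sqrt n |R_n(\vect u)| = o_P(g_\omega(\vect u))$ uniformly, provided $\eta$ is chosen smaller than $1/2 - \omega$.

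For (ii), split $\bar \Cb_n(\vect u) = \alpha_n(\vect u) - \sum_{j=1}^d \dot C_j(\vect u) \alpha_n(\vect u^{(j)})$. The finite-dimensional distributions of $\bar \Cb_n/\tilde g_\omega$ converge to those of $\Cb_C/\tilde g_\omega$ by the $\alpha$-mixing CLT. The core ingredient for the functional convergence is weighted weak convergence of the unobservable empirical process, $\alpha_n/\tilde g_\omega \weak \alpha_C/\tilde g_\omega$ in $\ell^\infty([0,1]^d)$, which I would establish via a blocking argument (reducing to an approximately independent setting) combined with chaining against bracketing numbers in the weighted metric --- a multivariate extension of the weighted limit theorems of Shao--Yu. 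The correction terms $\dot C_j(\vect u) \alpha_n(\vect u^{(j)})/\tilde g_\omega(\vect u)$ are controlled through Condition~\ref{cond:2nd}, which by integrating $\ddot C_{jk} \ge 0$ from $u_k = 0$ yields $\dot C_j(\vect u) \le K \min_{k \ne j} u_k/[u_j(1-u_j)]$. Combined with the cancellation that makes $\bar \Cb_n$ vanish on every face where $g_\omega = 0$ and with the explicit structure of $g_\omega$, this establishes joint tightness of $\bar \Cb_n/\tilde g_\omega$, after which continuous mapping finishes the argument.

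The main technical obstacle is the tightness of $\alpha_n/\tilde g_\omega$ under exponential $\alpha$-mixing in arbitrary dimension $d \ge 2$: lifting the one-dimensional weighted maximal inequality to the multivariate setting with the specific boundary-adapted weight $g_\omega$ requires joint control of the empirical process near all faces of the cube simultaneously, and does not appear to have been treated in the literature. Once this ingredient is secured, the remaining steps reduce to routine Taylor expansions and empirical-process manipulations.
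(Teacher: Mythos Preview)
Your plan for part (ii) contains a genuine error: the weighted weak convergence $\alpha_n/\tilde g_\omega \weak \alpha_C/\tilde g_\omega$ in $\ell^\infty([0,1]^d)$ that you call the ``core ingredient'' is not merely hard but false. The weight $g_\omega$ vanishes not only on the lower faces $\{u_j=0\}$ but also on the upper edges where $d-1$ coordinates equal~$1$; approaching such an edge, say $u_2,\dots,u_d\to 1$ with $u_1\in(0,1)$ fixed, one has $\alpha_n(\vect u)\to \alpha_{n1}(u_1)\ne 0$ while $g_\omega(\vect u)\to 0$, so $\alpha_n/\tilde g_\omega$ is unbounded in probability. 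The cancellation in $\bar\Cb_n$ is therefore not a secondary refinement to be invoked afterwards but the very reason $\bar\Cb_n/\tilde g_\omega$ is a sensible object; one must exploit $\bar\Cb_n(\vect u^{(j)})=0$ from the start. The paper never weights $\alpha_n$ by the full $g_\omega$: it uses only unweighted weak convergence of $\alpha_n$ for the finite-dimensional distributions, a modulus-of-continuity bound on increments of $\alpha_n$, and one-dimensional weighted bounds on $\alpha_{nj}$ and $\beta_{nj}$, and then proves asymptotic equicontinuity of $\bar\Cb_n/\tilde g_\omega$ directly by writing $\bar\Cb_n(\vect u)=\bar\Cb_n(\vect u)-\bar\Cb_n(\vect u^{(j)})$ near the relevant face. (Incidentally, $\ddot C_{jk}\ge 0$ is not assumed and need not hold for $d\ge 3$; the bound on $\dot C_j$ you want follows from $|\ddot C_{jk}|\le K/\{u_j(1-u_j)\}$ alone.)

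For part (i) the outline is closer in spirit, but a single Taylor expansion cannot be run uniformly over $[c/n,1-c/n]^d$. First, the Stute decomposition applies to $C_n=G_n\circ\vect G_n^-$, not to $\hat C_n$; a preliminary step is needed to show their difference is $o_P(n^{-1/2-\mu})$. Second, when some $u_j$ is of order $1/n$ the intermediate point $\xi_j$ between $u_j$ and $G_{nj}^-(u_j)$ need not satisfy $u_j/\xi_j=O_P(1)$ (this ratio is controlled only for $u_j\ge n^{-\gamma}$ with $\gamma<1$), and the one-dimensional weighted bound on $\alpha_{nj}$ does not handle the multivariate increment $\alpha_n\{\vect G_n^-(\vect u)\}-\alpha_n(\vect u)$. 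The paper therefore splits: the Taylor argument is carried out only on the inner region $N(n^{-1/2},1/2)$, while on the boundary shell $N(cn^{-1},n^{-1/2})$ both $\Cb_n/g_\omega$ and $\bar\Cb_n/g_\omega$ are shown to be $o_P(1)$ individually by direct estimates (no second-order expansion). The pieces are then glued via an asymptotic equicontinuity lemma. Your approach would need an analogous boundary treatment to close.
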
 \vspace{-.1cm}
The proof of Theorem~\ref{theo:weightalpha} is given in Section~\ref{subsec:proofmain} below. In fact, we state a more general result which is based on conditions on the usual empirical process~$\alpha_n$. These conditions are subsequently shown to be valid for exponentially alpha-mixing time series.

\section{Applications} \label{sec:app}
\def\theequation{3.\arabic{equation}}
\setcounter{equation}{0}

Theorem~\ref{theo:weightalpha} may be exploited in numerous ways. For instance, many of the most powerful goodness-of-fit tests for copulas are based on distances between the empirical copula and a parametric estimator for $C$ \citep{GenRemBea09}. The results of Theorem~\ref{theo:weightalpha} can be exploited to validate tests for a richer class of distances, as for weighted Kolomogorov-Smirnov or $L^2$-dis\-tances.
Second, estimators for extreme-value copulas can often be expressed through improper integrals involving the empirical copula \citep[see][among others]{GenSeg09}. Weighted weak convergence as in Theorem~\ref{theo:weightalpha} facilitates the anlysis of their asymptotic behavior and allows to extend the available results to time series observations. Details regarding the CFG- and the Pickands estimator are worked out in Section~\ref{subsec:pick} below.

Theorem~\ref{theo:weightalpha} may also be used outside the genuine copula framework, for instance, for proving asymptotic normality of multivariate rank statistics. The power of that approach lies in the fact that proofs for time series are essentially the same as for i.i.d.\ data sets. In Section~\ref{subsec:rankstat}, we derive a general weak convergence result for bivariate rank statistics.

\subsection{Bivariate rank statistics} \label{subsec:rankstat}
Bivariate rank statistics constitute an important class of real-valued statistics that can be written as
\begin{align*}
  R_n= \frac{1}{n}\sum_{i=1}^n J(\hat U_{i1}, \hat U_{i2})
\end{align*}
for some function $J:(0,1)^2 \to \mathbb{R}$, called score function. $R_n$ can also be expressed as a Lebesgue-Stieltjes integral with respect to $\hat C_n$, i.e.,
\[
R_n = \int_{[\frac{1}{n+1},\frac{n}{n+1}]^2} J(u,v) \mathrm{d}\hat C_n(u,v),
\]
which offers the way to derive the asymptotic behavior of $R_n$ from the asymptotic behavior of the empirical copula. This idea has already been exploited in \cite{FerRadWeg04}: however, in their Theorem 6, $J$ has to be a bounded function which is not the case for many interesting examples. Also, the uniform central limit theorems for multivariate rank statistics in \cite{VanWel07} require rather strong smoothness assumptions on $J$ (which imply boundedness of $J$).

\begin{example}[Rank Autocorrelation Coefficients] \label{ex:rankauto}
Suppose $Y_1, \dots, Y_n$ are drawn from a stationary, univariate time series  $(Y_i)_{i\in\Z}$. Rank autocorrelation coefficients of lag $k\in\N$ are statistics of the form 
\[
r_{n,k}  =\frac{1}{n-k}\sum_{i=k+1}^n J_1\Big \{\tfrac{n}{n+1}F_{n}(Y_{i}) \Big \}J_2\Big \{\tfrac{n}{n+1}F_{n}(Y_{i-k})\Big\}, 
\]
where $J_1, J_2$ are real-valued functions on $(0,1)$ and $F_n$ denotes the empirical cdf of $Y_1, \dots, Y_n$. For example, the van der Waerden autocorrelation \citep{HalPur88} is given by 
\begin{align*}
r_{n,k,vdW}=\frac{1}{n-k}\sum_{i=k+1}^n \Phi^{-1}\Big \{\tfrac{n}{n+1}F_{n}(Y_{i})\Big \}\Phi^{-1}\Big \{\tfrac{n}{n+1}F_{n}(Y_{i-k})\Big \},
\end{align*}
(with $\Phi$ and $\Phi^{-1}$ denoting the cdf of the standard normal distribution and its inverse, respectively)
and the Wil\-coxon autocorrelation \citep{HalPur88} is defined as
\begin{align*}
r_{n,k,W}= \frac{1}{n-k}\sum_{i=k+1}^n \Big  \{\tfrac{n}{n+1}F_{n}(Y_{i})-\frac{1}{2}\Big \}\log\Big \{\frac{\tfrac{n}{n+1}F_{n}(Y_{i-k}) }{1-\tfrac{n}{n+1}F_{n}(Y_{i-k})} \Big \}.
\end{align*}
Obviously, the corresponding score functions are unbounded.
Asymptotic normality for these and similar rank statistics has been shown for i.i.d.~observations and for $ARMA$-processes \citep{HalIngPur85}. To the best of our knowledge, no general tool to handle the asymptotic behavior of such statistics for dependent observations seems to be available. Theorem \ref{theo:score} below aims at partially filling that gap.
\end{example}

\begin{example}[The pseudo-maximum likelihood estimator]
As a common practice in bivariate copula modeling one assumes to observe a sample $\vect X_1, \dots, \vect X_n$ from a bivariate distribution whose copula belongs to a parametric copula family, parametrized by a finite-dimensional parameter $\theta\in \Theta \subset  \R^p$. Except for the assumption of absolute continuity, the marginal distributions are often left unspecified in order to allow for maximal robustness with respect to potential miss-specification. In such a setting, the pseudo-maximum likelihood estimator (see \cite{GenGhoRiv95} for a theoretical investigation) provides  the most common estimator for the parameter~$\theta$. If $c_\theta$ denotes the corresponding copula density, the estimator is defined as
\[
\hat \theta_n={\arg\max}_{\theta \in \Theta} \sum_{i=1}^{n} \log  \{ c_\theta (\hat U_{i1}, \hat U_{i2} ) \}.
\]
Using standard arguments from maximum-likelihood theory and imposing suitable regularity conditions, the asymptotic distribution of $\sqrt n (\hat \theta_n - \theta_0)$ can be derived from the asymptotic behavior of
\begin{align}  \label{eq:mlrn}
R_n = \frac{1}{n} \sum_{i=1}^{n} J_{\theta_0} (\hat U_{i1}, \hat U_{i2} ),
\end{align}
where $\theta_0$ denotes the unknown true parameter and where $J_\theta= (\partial \log c_\theta )/(\partial \theta)$ denote the score function. Typically, this function is unbounded, as for instance in case of the bivariate Gaussian copula model where $\theta$ is the correlation coefficient and the score function takes the form
\[
J_{\theta } (u,v)= \frac{\theta(1-\theta^2)-\theta\{ \Phi^{-1}(u)^2 + \Phi^{-1}(v)^2\} + (1+\theta^2)\Phi^{-1}(u)\Phi^{-1}(v)}{1+\theta^2}.
\]
Still, the conditions of Theorem~\ref{theo:score} below can be shown to be valid.

Finally, note that pseudo-maximum likelihood estimators also arise in Markovian copula models \citep{CheFan06mar} where copulas are used to model the serial dependence of a stationary time series at lag one. 
Again, their asymptotic distribution may be derived from rank statistics as in \eqref{eq:mlrn}.
\end{example}

The following theorem is the central result of this section. It establishes weak convergence of bivariate rank-statistics by exploiting weighted weak convergence of the empirical copula process. For the definition of the space of functions of locally bounded total variation in the sense of Hardy-Krause, $BVHK_{loc}((0,1)^2)$, and for Lebesgue-Stieltjes integrals with respect to such functions, we refer the reader to Definition~\ref{def:locbvhk} in the supplementary material. The proof is given in Section~\ref{subsec:proofs2}.

\begin{theorem}\label{theo:score}
Suppose the conditions of Theorem~\ref{theo:weightalpha} are met. Moreover, suppose that $J \in BVHK_{loc}((0,1)^2)$ is right-continuous and that there exists $\omega\in(0,1/2)$ such that $\vert J (\vect u) \vert \leq \const \times g_\omega (\vect u)^{-1}$  and such that 
\begin{align}
\int_{(0,1)^2} g_\omega(\vect u) \vert \mathrm{d} J  (\vect u)\vert< \infty. \label{cond:score}
\end{align}
Moreover, for $\delta \to 0$, suppose that
\begin{align}
\int_{(\delta,1-\delta]} \vert  J( \mathrm{d}u,\delta)  \vert  =O(\delta^{-\omega}) & \text{ and } \int_{(\delta,1-\delta]}   \vert  J(\mathrm{d}u,1-\delta)  \vert   =O(\delta^{-\omega}), \label{cond:score2}\\
\int_{(\delta,1-\delta]}  \vert  J (\delta,\mathrm{d}v) \vert   =O(\delta^{-\omega}) & \text{ and } \int_{(\delta,1-\delta]}  \vert J(1-\delta,\mathrm{d}v)  \vert   =O(\delta^{-\omega}).\label{cond:score3}
\end{align}
Then, as $n \to \infty$,
\begin{align*}
\sqrt n \{ R_n - \mathbb{E} [ J(\vect U) ]  \} \weak   \int_{(0,1)^2  }\mathbb{C}_C (\vect u) \mathrm{d}  J(\vect u)  .
\end{align*}
The weak limit is normally distributed with mean $0$ and variance
\[
\sigma^2 =  \int_{(0,1)^2  } \int_{(0,1)^2  } \Exp[\Cb_C(\vect u) \Cb_C(\vect v)] \mathrm{d} J(\vect u) \mathrm{d}J(\vect v).
\]
 \end{theorem}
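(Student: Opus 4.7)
The plan is to transfer the variation of $J$ onto the empirical copula via a two-dimensional Hardy--Krause Lebesgue--Stieltjes integration by parts (using the results of Appendix~\ref{sec:integral}) and then invoke Theorem~\ref{theo:weightalpha} to identify the limit as a continuous linear functional of the Gaussian process $\Cb_C$. Setting $A_n = [\tfrac{1}{n+1},\tfrac{n}{n+1}]^2$, I first write
\[
\sqrt n\{R_n - \Exp[J(\vect U)]\} = \sqrt n \int_{A_n} J\, d(\hat C_n - C) - \sqrt n \int_{(0,1)^2\setminus A_n} J\, dC.
\]
The second term is $o(1)$: since $|J| \le \const\cdot g_\omega^{-1}$ and the removed strip has width $O(n^{-1})$, integrability of $u^{-\omega}$ near the boundary together with $\omega<1/2$ yields $\sqrt n\cdot O(n^{\omega-1}) = o(1)$. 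Integration by parts applied to the first term produces a bulk integral $\int_{A_n} \hat\Cb_n\, dJ$, four edge integrals along the sides of $A_n$, and four corner evaluations of $\hat \Cb_n$.

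The next step is to replace $\hat \Cb_n$ by the linearization $\bar \Cb_n$ of Theorem~\ref{theo:weightalpha} everywhere, whose approximation error is $o_P(1)\cdot g_\omega$ uniformly on $A_n$. In the bulk the error is bounded by $o_P(1)\cdot \int g_\omega\,|dJ|$, which vanishes by~\eqref{cond:score}. On each of the four edges the error is controlled by $o_P(1)\cdot\sup_{\text{edge}} g_\omega\cdot\int|dJ|_{\text{edge}}$; the product $\sup g_\omega\cdot\int|dJ|_{\text{edge}}$ is $O(n^{-\omega})\cdot O(n^\omega) = O(1)$ by~\eqref{cond:score2}--\eqref{cond:score3}, giving a total error of $o_P(1)$. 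At the corners $|\hat \Cb_n - \bar \Cb_n|$ is $o_P(g_\omega(\text{corner})) = o_P(n^{-\omega}) = o_P(1)$.

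What remains is to show that the $\bar \Cb_n$ boundary contributions are themselves negligible and that the $\bar \Cb_n$ bulk integral converges to $\int_{(0,1)^2}\Cb_C\, dJ$. The corner values and edge integrals vanish because, as the edges collapse onto $\partial[0,1]^2$, the limiting Gaussian process $\Cb_C$ is identically zero there; a careful accounting using the explicit decomposition $\bar \Cb_n = \alpha_n - \sum_\dimi \dot C_\dimi\,\alpha_n(\vect\udimi)$ together with hypotheses~\eqref{cond:score2}--\eqref{cond:score3} shows each edge and corner term tends to zero in probability. For the bulk, I split the domain into $(\delta,1-\delta)^2$ and four boundary strips. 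On the interior, the weighted weak convergence $\bar \Cb_n/\tilde g_\omega \weak \Cb_C/\tilde g_\omega$ in $\ell^\infty$ combined with the continuous mapping theorem applied to the linear functional $f \mapsto \int_{(\delta,1-\delta)^2} f\,dJ$ (continuous in supremum norm because $J \in BVHK_{loc}$) yields $\int_{(\delta,1-\delta)^2}\Cb_C\, dJ$. The strip contributions are uniformly controlled by $O_P(1)\cdot \int_{\text{strip}} g_\omega\,|dJ|$, which tends to zero as $\delta\to 0$ by dominated convergence in~\eqref{cond:score}; the analogous bound for $\Cb_C$ closes a standard $\delta$-sandwich argument.

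Gaussianity and the variance formula are then immediate: $\int_{(0,1)^2}\Cb_C\, dJ$ is a pathwise linear functional of a centered Gaussian process, hence normally distributed with mean $0$, and Fubini applied to $\Exp[\Cb_C(\vect u)\Cb_C(\vect v)]$ delivers the stated double-integral representation of $\sigma^2$. The main obstacle throughout is the careful bookkeeping of the boundary contributions from integration by parts: the weights $g_\omega$ and the inverse-weight edge-variation bounds on $J$ balance exactly at rate $O(1)$, so proving that these terms are genuinely $o_P(1)$—and not merely bounded—forces one to exploit both the precise boundary-vanishing of the limit $\Cb_C$ and the additive structure of $\bar\Cb_n$ in terms of univariate empirical processes.
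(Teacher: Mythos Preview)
Your overall strategy matches the paper's: restrict to a shrinking square, integrate by parts, replace $\hat\Cb_n$ by $\bar\Cb_n$ via Theorem~\ref{theo:weightalpha}, and push the bulk integral to the weighted limit with the continuous mapping theorem. The treatment of the tail term $\sqrt n\int_{(0,1)^2\setminus A_n}J\,dC$, the corner evaluations, and the edge integrals is essentially correct, and your $\delta$-sandwich for the bulk is a legitimate variant of the paper's more direct use of Lemma~\ref{lem:cbound2} to extend the domain of integration from $A_n$ to $(0,1)^2$.

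There is, however, a genuine gap in the integration-by-parts step. You state that the formula produces only a bulk integral, four edge integrals, and four corner evaluations. That is the version of the formula valid when one of the two integrands is \emph{continuous} (the Corollary following Theorem~\ref{theo:partialgen}). But $\hat\Cb_n = \sqrt n(\hat C_n - C)$ is not continuous: $\hat C_n$ has atoms at the pseudo-observations. The full integration-by-parts formula of Theorem~\ref{theo:partialgen} therefore generates an additional remainder (the paper's $r_{n3}$) involving expressions such as
\[
\int_{A_n} \nu_n(\{u\}\times(v,1-\tfrac{1}{2n}])\,\mathrm d J(u,v),
\]
where $\nu_n$ is the signed measure associated with $\hat\Cb_n$. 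These jump terms are not automatically negligible: since $C$ is continuous one has $\nu_n(\{u\}\times A) = \sqrt n\,\nu_{\hat C_n}(\{u\}\times A) \le \sqrt n\{\hat C_n(u,1)-\hat C_n(u-,1)\}$, which is $n^{-1/2}$ times the maximal number of tied values among $\hat U_{11},\dots,\hat U_{n1}$. In the time-series setting ties are possible, and the paper bounds this quantity by $O_P(n^{-\mu})$ for any $\mu\in(\omega,1/2)$ using Condition~\ref{cond:modulus} (see the argument around~\eqref{eq:maxjump}). Multiplied against $\int_{A_n}|\mathrm dJ| = O(n^{\omega})$ from~\eqref{cond:score}, this yields $O_P(n^{\omega-\mu})=o_P(1)$. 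Without this step the integration by parts is incomplete, and the balance you identify as the ``main obstacle'' is not the only one: the discontinuity of $\hat C_n$ produces a separate remainder that must be controlled independently.
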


\begin{remark} \label{rem:rank}
(i)
Provided the second order partial derivative $\ddot J_{12} (u,v) := \partial^2 J(u,v)/\partial u\partial v$ exists, then the conditions \eqref{cond:score}--\eqref{cond:score3} are equivalent to 
$
\int_{(0,1)^2} g_\omega(u,v) \vert \ddot J_{12} (u,v) \vert \mathrm{d}(u,v)< \infty 
$
and, as $\delta \to 0$,
\begin{align*}
\int_\delta^{1-\delta} \vert \dot J_1(u,\delta)  \vert  \mathrm{d}u =O(\delta^{-\omega}) & \text{ and } \int_\delta^{1-\delta} \vert \dot J_1(u,1-\delta)  \vert  \mathrm{d}u =O(\delta^{-\omega}),\\
\int_\delta^{1-\delta} \vert \dot J_2 (\delta,v) \vert  \mathrm{d}v =O(\delta^{-\omega}) & \text{ and } \int_\delta^{1-\delta} \vert \dot J_2(1-\delta,v)  \vert  \mathrm{d}v =O(\delta^{-\omega}),
\end{align*} 
where $\dot J_1(u,v) := \partial J(u,v)/\partial u, \dot J_2(u,v) := \partial J(u,v)/\partial v$.

\smallskip
\noindent
(ii) A careful check of the proof of Theorem~\ref{theo:score} shows that the theorem actually remains valid under the more general conditions of Theorem~\ref{theo:weight} below, with $\omega \in (0,1/2)$ replaced by $\omega \in(0,\frac{\theta_1}{2(1-\theta_1)}\wedge \frac{\theta_2}{2(1-\theta_2)}\wedge (\theta_3-1/2))$.
\end{remark}

As a simple application of Theorem~\ref{theo:score} let us return to the autocorrelation coefficients from Example~\ref{ex:rankauto}. It can easily be shown that both $J_{vdW}(u,v)=\Phi^{-1}(u)\Phi^{-1}(v)$ 
and $J_{W}(u,v)=(u-\frac{1}{2})\log(\frac{v}{1-v})$ satisfy the conditions of Theorem \ref{theo:score}. To prove this for $J_{vdW}$ use that $\vert \Phi^{-1}(u)\vert \leq \{ u (1-u)\}^{-\eps}$  for any $\eps>0$ and that $\frac{1}{ \phi \{ \Phi^{-1}(u)\} }\leq \{ u (1- u)\}^{-1}$, with $\phi$ denoting the density of the standard normal distribution. Therefore, both coefficients are asymptotically normally distributed for any stationary, exponentially alpha-mixing time series provided that the copula of $(Y_t,Y_{t-k})$ satisfies Condition~\ref{cond:2nd}. This broadens results from \cite{HalIngPur85}, which may be further extended along the lines of Remark~\ref{rem:rank}(ii) by a more thorough investigation of Conditions~\ref{cond:modulus}--\ref{cond:quantile}. Details are omitted for the sake of brevity.

\subsection{Nonparametric estimation of Pickands dependence function}\label{subsec:pick}
Theorem~\ref{theo:weightalpha} can be used to extend recent results for the estimation of Pickands dependence functions. Recall that $C$ is a multivariate extreme-value copula if and only if $C$ has a representation of the form
\[ 
C(\vect u) = \exp \left\{ \Big( \sum_{\dimi=1}^d \log u_\dimi\Big) A  \Big ( \frac{\log u_1}{\sum_{\dimi=1}^d \log u_\dimi}, \dots, \frac{\log u_{d-1}}{\sum_{\dimi=1}^d \log u_\dimi} \Big  ) \right\}, ~~ \vect u \in (0,1)^d,
\] 
for some function $A:\Delta_{d-1} \to [1/d,1]$, where $\Delta_{d-1}$ denotes the unit simplex $\Delta_{d-1}= \{ \vect w=(w_1, \dots , w_{d-1}) \in [0,1]^{d-1}: \sum_{\dimi=1}^{d-1}w_\dimi\leq 1\}$. In that case, $A$ is necessarily convex and satisfies the relationship
\[
\max(w_1, \dots, w_d) \le A(w_1, \dots, w_{d-1}) \le 1 \quad (w_d = {\textstyle 1-\sum_{j=1}^{d-1} w_j}),
\]
for all $\vect w \in \Delta_{d-1}$. By reference to \cite{Pic81}, 
$A$ is called Pickands dependence function. Nonparametric estimation methods for $A$ in the i.i.d.\ case and under the additional assumption that the marginal distributions are known have been considered in \cite{Pic81, Deh91, CapFouGen97, JimVilFlo01}, among others. In the more realistic case of unknown marginal distribution, rank-based estimators have for instance been investigated in \cite{GenSeg09, BucDetVol11, GudSeg12, BerBucDet13}, among others. 
For illustrative purposes, we restrict attention to the rank-based versions of the Pickands estimator in \cite{GudSeg12} in the following, even though the results easily carry over to, for instance, the CFG-estimator. The Pickands-estimator is defined as
\[
\hat A^P_n (\vect w) = \left[ \frac{1}{n} \sum_{i=1}^n \min\Big\{ \frac{- \log(\hat U_{i1}) }{ w_1} , \dots, \frac{- \log (\hat U_{id})}{w_{d} } \Big \} \right]^{-1} 
\]
and it follows by simple algebra (see Lemma 1 in \citealp{GudSeg12}) that
$
\Ab_n^P
:=\sqrt n ( \hat A_{n}^P - A ) 
=
- A^2 \Bb_n^P/ (1+n^{1/2} \Bb_n^P),
$
where
\[
\Bb_n^P(\vect w) = \int_0^1 \hat \Cb_n(u^{w_1}, \dots, u^{w_{d}}) \, \frac{\mathrm d u}{u}.
\]
Note that $\int_0^1u^{-1} \, \mathrm d u$ does not converge, which hinders a direct application of the continuous mapping theorem to deduce weak convergence of $\Bb_n^P$ (and hence of $\Ab_n^P$) in $\ell^\infty(\Delta_{d-1})$ just on the basis of (unweighted) weak convergence of $\hat \Cb_n$. Deeper results are necessary and in fact, 
\cite{GenSeg09} and \cite{GudSeg12} deduce weak convergence  of $\Bb_n^P$ by using Stute's representation for the empirical copula process based on i.i.d.\ observations (see \citealp{Stu84, Tsu05}) and by exploiting a weighted weak convergence result for $\alpha_n$. 

With Theorem~\ref{theo:weightalpha}, we can give a much simpler proof. Write
\[
\Bb_n^P(\vect w) = \int_{0}^1 \frac{\hat \Cb_n( u^{w_1}, \dots, u^{w_{d}})}{\min( u^{w_1}, \dots, u^{w_{d}})^\omega}  \frac{\min(u^{w_1}, \dots, u^{w_{d}})^\omega}{u}\, \mathrm d u.
\]
Then, since $\int_0^1 {\min( u^{w_1}, \dots, u^{w_{d}})^\omega}\, \frac{\mathrm d u}{u} \le \int_0^1 u^{\omega/d-1}\, \mathrm d u$ exists for any $\omega>0$, weak convergence of $\Bb_n^P$ is a direct consequence of the continuous mapping theorem and Theorem~\ref{theo:weightalpha}. Note that this method of proof is not restricted to the i.i.d.\ case.

\section{Proofs}\label{sec:proofs}
\def\theequation{4.\arabic{equation}}
\setcounter{equation}{0}

\subsection{Proof of Theorem~\ref{theo:weightalpha}} 
\label{subsec:proofmain}


Theorem~\ref{theo:weightalpha} will be proved by an application of a more general result on the empirical copula process. For its formulation, we need a couple of additional conditions which, subsequently, will be shown to be satisfied for exponentially alpha-mixing time series.

\begin{condition} \label{cond:modulus}
There exists some $\theta_1 \in(0,1/2]$ such that, for all $\mu \in (0,\theta_1)$ and all sequences $\delta_n \to 0$, we have
\[
	M_n(\delta_n, \mu)
	:=
	\sup_{ | \vect u - \vect v | \le \delta_n}   \frac{\vert  \alpha_n(\vect u) - \alpha_n(\vect v) \vert }{\vert \vect u - \vect v \vert^\mu\vee n^{-\mu}}  =o_P(1).
\]
\end{condition}

Condition~\ref{cond:modulus} can for instance be verified in the i.i.d.\ case with $\theta_1=1/2$, exploiting a bound for the multivariate oscillation modulus derived in Proposition A.1 in \cite{Seg12}.

\begin{condition} \label{cond:weak}
The empirical process $\alpha_n$ converges weakly in $\ell^\infty([0,1]^d)$ to some limit process $\alpha_C$ which has continuous sample paths, almost surely.
\end{condition}

For i.i.d.\ samples, the latter condition is satisfies with $\alpha_C$ being a $C$-Brownian bridge, i.e., a centered Gaussian process with con\-tinuous sample paths, a.s., and with $\Cov\{ \alpha_C(\vect u), \alpha_C(\vect v) \} = C(\vect u \wedge \vect v) - C(\vect u)C(\vect v)$.

\begin{condition} \label{cond:quantile}
There exist  $\theta_2 \in(0,1/2]$ and  $\theta_3 \in (1/2, 1]$  such that, for any $\omega \in (0,\theta_2)$, any $\lambda\in (0, \theta_3)$ and all $j=1, \dots, d$, we have 
\[
	\sup_{u_j\in(0,1)} \left| \frac{\alpha_{nj}(u_j) }{u_j^{\omega}(1-u_j)^{\omega}} \right| =O_P(1), \quad 
	\sup_{u_j\in(1/n^\lambda,1-1/n^\lambda)} \left| \frac{\beta_{nj}(u_j) }{u_j^{\omega}(1-u_j)^{\omega}} \right| =O_P(1),
\]
where
$\alpha_{nj}(u_j) = \sqrt n \{ G_{nj}(u_j) - u_j \}
$ and $\beta_{nj}(u_j) = \sqrt n \{ G_{nj}^-(u_j) - u_j \}$.
\end{condition}
Here, $
G_{n\dimi}(u_\dimi) = n^{-1} \sum_{i=1}^n \ind( U_{i\dimi} \le u_\dimi)
$
and,
for a distribution function $H$ on the reals, $H^-$ denotes the (left-continuous) generalized inverse function of $H$
defined as 
\begin{align*} 
  H^{-}(u):= \inf\{x\in\R : H(x)\geq u \}, \quad  0<u\leq 1,
\end{align*}
and $H^-(0) = \sup\{x\in\R : H(x)=0\}$.
In the i.i.d.~case, Condition~\ref{cond:quantile} is a mere consequence of results in \cite{CsoCsoHorMas86}, with $\theta_2=1/2$ , $\theta_3=1$.

The following proposition shows that the (probabilistic) Conditions~\ref{cond:modulus}, \ref{cond:weak} and \ref{cond:quantile} are satisfied for sequences that are exponentially alpha-mixing. 

\begin{prop}\label{prop:wdcheckmod}~
Suppose that $\vect  X_1, \vect X_2, \dots$ is a stationary, alpha-mixing sequence with $\alpha^{\scriptscriptstyle [\vect X]}(k )=O(a^k)$, as $k \to \infty$, for some $a \in (0,1)$. Then, Conditions \ref{cond:modulus}, \ref{cond:weak} and \ref{cond:quantile} are satisfied with $\theta_1=\theta_2=1/2$ and $\theta_3=1$.
\end{prop}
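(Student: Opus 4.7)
The plan is to verify Conditions~\ref{cond:modulus}, \ref{cond:weak} and \ref{cond:quantile} one at a time, drawing on existing invariance principles for exponentially $\alpha$-mixing sequences and, where necessary, filling in a chaining argument tailored to the multivariate setting. Throughout, the key technical input is that the mixing coefficients $\alpha^{\scriptscriptstyle [\vect X]}(k)$ decay geometrically, so $\sum_{k\geq 1} k^{\,q}\,\alpha^{\scriptscriptstyle [\vect X]}(k)^{r}<\infty$ for \emph{any} $q\ge 0$, $r>0$; this will freely be used to convert moment inequalities into tightness results.

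For Condition~\ref{cond:weak}, I would invoke a multivariate empirical process CLT for $\alpha$-mixing sequences. Finite-dimensional convergence follows from Bradley's CLT applied to indicator functions of lower-left rectangles (the geometric mixing rate supplies all moment conditions trivially). Tightness in $\ell^\infty([0,1]^d)$ follows from, e.g., the bracketing/moment approach of Doukhan, Massart and Rio (1995) or Rio (2000), since the class $\{\ind(\cdot\leq\vect u):\vect u\in[0,1]^d\}$ has polynomial bracketing numbers and the mixing decays exponentially. The limit $\alpha_C$ is Gaussian with the covariance announced in Theorem~\ref{theo:weightalpha}; its covariance function is continuous since the increments of indicators of rectangles have $L^2$-norm that tends to zero with the Lebesgue measure of the symmetric difference, so $\alpha_C$ admits a version with continuous sample paths, as required.

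For Condition~\ref{cond:modulus}, the plan is to bound the oscillation modulus by a chaining argument built on moment bounds. Given $\vect u,\vect v\in[0,1]^d$, write $\alpha_n(\vect u)-\alpha_n(\vect v)= n^{-1/2}\sum_{i=1}^n \big\{\ind(\vect U_i\le\vect u)-\ind(\vect U_i\le\vect v)-\Exp[\cdots]\big\}$, whose summands are bounded by $1$ and have variance $\lesssim |\vect u-\vect v|$. Rio's covariance inequality together with exponential $\alpha$-mixing yields, for every integer $p\ge 1$,
\[
  \Exp\bigl|\alpha_n(\vect u)-\alpha_n(\vect v)\bigr|^{2p}\le C_p\, |\vect u-\vect v|^p,
\]
uniformly in $\vect u,\vect v$ with $|\vect u-\vect v|\geq n^{-1}$. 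Choosing $p$ large enough (depending on $d$ and the desired Hölder exponent $\mu<1/2$) and applying a standard dyadic chaining argument on $[0,1]^d$ gives $\sup_{|\vect u-\vect v|\le\delta_n} |\alpha_n(\vect u)-\alpha_n(\vect v)|\le C_p\,(\delta_n\vee n^{-1})^{\mu}\,\log n$ with high probability, whence $M_n(\delta_n,\mu)=o_P(1)$. This gives $\theta_1=1/2$.

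For Condition~\ref{cond:quantile}, each coordinate process is a one-dimensional empirical process based on the uniform marginals $U_{1\dimi},\dots,U_{n\dimi}$, and these inherit the exponential $\alpha$-mixing of $(\vect X_i)$. The weighted uniform bound on $\alpha_{nj}$ with weight $u_j^\omega(1-u_j)^\omega$, $\omega<1/2$, is exactly the content of the weighted invariance principle of Shao and Yu (1996) for $\alpha$-mixing sequences, yielding $\theta_2=1/2$. For the inverse process $\beta_{nj}$, I would combine the weighted bound on $\alpha_{nj}$ with a Vervaat-type identity $\beta_{nj}(u_j)= -\alpha_{nj}\bigl(G_{nj}^-(u_j)\bigr)+O(n^{-1/2})$, and an elementary control showing that $G_{nj}^-(u_j)/u_j$ and $(1-G_{nj}^-(u_j))/(1-u_j)$ are bounded in probability uniformly over $u_j\in[n^{-\lambda},1-n^{-\lambda}]$ for any $\lambda<1$ (another consequence of Shao--Yu applied near the endpoints). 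This yields $\theta_3=1$.

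The main obstacle will be Condition~\ref{cond:modulus}: the moment bound above has to be derived with the right dependence on $p$ so that chaining closes with an arbitrarily small loss in the Hölder exponent, and one must be careful that the mixing-based variance estimate still scales like $|\vect u-\vect v|$ (not like the $L^1$-norm of the indicator difference, which is the same, but it must be proved rather than assumed). Once this moment inequality is in hand, the rest is standard chaining and the geometric mixing rate makes all tail sums trivially summable.
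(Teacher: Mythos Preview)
Your plan matches the paper's proof closely: Condition~\ref{cond:weak} via Rio (2000), Condition~\ref{cond:quantile} for $\alpha_{nj}$ via Shao--Yu (1996), and Condition~\ref{cond:modulus} via high-moment bounds plus chaining. Two remarks on execution. First, the moment bound you state, $\Exp|\alpha_n(\vect u)-\alpha_n(\vect v)|^{2p}\le C_p|\vect u-\vect v|^p$, is slightly too sharp: the paper obtains it via cumulant estimates (Kley--Volgushev--Dette--Hallin, 2014) and picks up a logarithmic correction, so one only gets $\|\alpha_n(\vect u)-\alpha_n(\vect v)\|_{2L}\le K|\vect u-\vect v|^\gamma$ for any $\gamma<1/2$; this is harmless since you only need $\mu<1/2$, but ``Rio's covariance inequality'' alone will not deliver the $2p$-th moment---you need a Rosenthal-type or cumulant argument. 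Second, your stated chaining conclusion $\sup_{|\vect u-\vect v|\le\delta_n}|\alpha_n(\vect u)-\alpha_n(\vect v)|\le C_p(\delta_n\vee n^{-1})^\mu\log n$ does \emph{not} yield $M_n(\delta_n,\mu)=o_P(1)$, because dividing by $|\vect u-\vect v|^\mu$ at the smallest scales leaves a $\log n$ factor; the fix (which the paper carries out by a dyadic decomposition in $|\vect u-\vect v|$ combined with a separate treatment of the regime $|\vect u-\vect v|\le n^{-1}$) is to chain with a H\"older exponent $\mu'\in(\mu,1/2)$ and exploit the gap $\mu'-\mu$. Finally, for the quantile process $\beta_{nj}$ the paper simply cites Cs\"org\H{o}--Yu (1996) rather than going through a Vervaat identity; your route works too, but note that the $O(n^{-1/2})$ remainder in Vervaat requires controlling the maximal jump of $G_{nj}$, which is not automatically $1/n$ under serial dependence and is handled in the paper via Condition~\ref{cond:modulus}.
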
 

Here, Condition~\ref{cond:quantile} is a mere consequence of results in  \cite{ShaYu96} and \cite{CsoYu96}, whereas Condition~\ref{cond:weak} has been shown in \cite{Rio00}. For the proof of Condition~\ref{cond:modulus}, we can rely on results from \cite{KleVolDetHal14}. 
The precise arguments are given in Section~\ref{subsec:alphacond} below.

The following theorem can be regarded as a generalization of Theorem~\ref{theo:weightalpha}: weighted weak convergence of the empirical copula process takes place provided the abstract Conditions~\ref{cond:modulus}, \ref{cond:weak} and \ref{cond:quantile} are met. The proof is given in Section~\ref{subsec:proofweight} below.

\begin{theorem}[Weighted weak convergence of empirical copula processes] \label{theo:weight}
Suppose Conditions~\ref{cond:2nd}, \ref{cond:modulus} and \ref{cond:quantile} are met. Then, for any $\bbb \in (0,1)$ and any $\omega\in(0,\frac{\theta_1}{2(1-\theta_1)}\wedge \frac{\theta_2}{2(1-\theta_2)}\wedge (\theta_3-1/2))$, 
\[
\sup_{\vect u \in [\frac{\bbb}{n}, 1-\frac{\bbb}{n}]^d} \left| 
\frac{ \hat \Cb_n(\vect u)}{g_\omega(\vect u) } - \frac{\bar \Cb_n(\vect u) } { g_\omega(\vect u) } 
\right| =o_P(1).
\]
If additionally Condition~\ref{cond:weak} is met, then
$
\bar \Cb_n/ \tilde g_\omega
\weak 
\Cb_C /  \tilde g_\omega
$
in $(\ell^\infty([0,1]^d), \| \cdot \|_\infty)$.
\end{theorem}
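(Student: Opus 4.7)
The plan is to start from the pointwise identity $\hat C_n(\vect u) = G_n(\tilde{\vect u})$, where $\tilde u_j := G_{nj}^-((n+1)u_j/n)$. Writing
\[
\hat\Cb_n(\vect u) = \alpha_n(\tilde{\vect u}) + \sqrt n\{C(\tilde{\vect u}) - C(\vect u)\},
\]
the target $\bar\Cb_n(\vect u) = \alpha_n(\vect u) - \sum_j \dot C_j(\vect u)\alpha_n(\vect u^{(j)})$ emerges from three substitutions: replacing $\alpha_n(\tilde{\vect u})$ by $\alpha_n(\vect u)$; Taylor-expanding $C(\tilde{\vect u}) - C(\vect u)$ around $\vect u$ to first order; and replacing $\sqrt n(\tilde u_j - u_j)$ by $-\alpha_n(\vect u^{(j)})$. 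The first assertion reduces to showing that each of these three substitutions contributes an error that is $o_P(g_\omega(\vect u))$ uniformly over $\vect u \in [\bbb/n,1-\bbb/n]^d$.

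The oscillation $\alpha_n(\tilde{\vect u}) - \alpha_n(\vect u)$ is bounded via Condition~\ref{cond:modulus}: for any $\mu<\theta_1$, it is at most $M_n(\delta_n,\mu)|\tilde{\vect u}-\vect u|^{\mu}$, and Condition~\ref{cond:quantile} gives $|\tilde u_j - u_j| = n^{-1/2}|\beta_{nj}(u_j)| = O_P(n^{-1/2}(u_j(1-u_j))^\omega)$ uniformly on $[1/n^\lambda,1-1/n^\lambda]$, with the extreme strips $u_j \le 1/n^\lambda$ handled separately via the first display of Condition~\ref{cond:quantile} combined with crude bounds for low-order statistics. The identity $\sqrt n(G_{nj}^-(u_j)-u_j) = -\alpha_{nj}(G_{nj}^-(u_j))$ reduces the Bahadur substitution to a one-dimensional oscillation estimate of the same form, to which the univariate version of Condition~\ref{cond:modulus} applies. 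After dividing by $g_\omega(\vect u)$, the exponent restriction $\omega<\theta_1/[2(1-\theta_1)] \wedge (\theta_3-1/2)$ is what forces these two errors to vanish.

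The hard step is the second-order Taylor remainder: after dividing by $g_\omega(\vect u)$, one must bound
\[
\sqrt n \sum_{j,k} |\ddot C_{jk}(\vect\xi)|\,|\tilde u_j-u_j|\,|\tilde u_k-u_k|/g_\omega(\vect u)
\]
uniformly, where $\vect\xi$ lies between $\vect u$ and $\tilde{\vect u}$. Condition~\ref{cond:2nd} supplies $|\ddot C_{jk}(\vect\xi)| \le K/\min\{\xi_j(1-\xi_j),\xi_k(1-\xi_k)\}$, but one must first argue that $\xi_j(1-\xi_j) \asymp u_j(1-u_j)$ with high probability, which requires $|\tilde u_j - u_j| \le u_j(1-u_j)/2$ and is precisely where the restriction $\omega < \theta_2/[2(1-\theta_2)]$ is consumed. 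Combined with the weighted Bahadur bound $|\tilde u_j-u_j| = O_P(n^{-1/2}(u_j(1-u_j))^\omega)$, the remainder is of order $n^{-1/2}(u_j(1-u_j))^{2\omega-1}$, which is absorbed by $g_\omega(\vect u)$ under the stated exponent restriction. This bookkeeping of three competing rates near the faces and edges of the cube is the main obstacle of the proof.

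The second assertion follows by a continuous-mapping argument. The key observation is that $\bar\Cb_n(\vect u)$ vanishes identically on the zero set of $g_\omega$: on a face $\{u_k = 0\}$ one has $\alpha_n(\vect u) = 0$, $\alpha_n(\vect u^{(k)}) = \alpha_{nk}(0) = 0$, and $\dot C_j(\vect u) = 0$ for $j \ne k$ (since $C \equiv 0$ on that face); on an edge where all coordinates except a single $u_{j_0}$ equal $1$, the identities $\dot C_{j_0}(\vect u^{(j_0)}) = 1$ and $\alpha_n(\vect u^{(j)}) = \alpha_{nj}(1) = 0$ for $j\ne j_0$ produce a full cancellation. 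Combining Condition~\ref{cond:weak} with the marginal tightness of $\alpha_{nj}(u_j)/(u_j(1-u_j))^{\omega}$ supplied by Condition~\ref{cond:quantile}, the functional $(\alpha_n, (\alpha_{nj})_j) \mapsto \bar\Cb_n/\tilde g_\omega$ is continuous on the relevant subspace of $\ell^\infty$, so the continuous mapping theorem delivers $\bar\Cb_n/\tilde g_\omega \weak \Cb_C/\tilde g_\omega$.
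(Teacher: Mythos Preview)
Your three-term decomposition matches the paper's $B_{n1}+B_{n2}+B_{n3}$ in Lemma~4.8 exactly, and for $\vect u$ in the ``bulk'' region $N(n^{-1/2},1/2)$ your exponent bookkeeping is essentially correct. The genuine gap is in the boundary strip where $g_1(\vect u)\in(\bbb n^{-1},n^{-1/2}]$. Your own rate for the second-order Taylor remainder, $n^{-1/2}(u_j(1-u_j))^{2\omega-1}$, after dividing by $g_\omega(\vect u)\sim (u_j(1-u_j))^{\omega}$, is of order $n^{-1/2}(u_j(1-u_j))^{\omega-1}$; at $u_j\sim \bbb/n$ this is $\sim n^{1/2-\omega}\to\infty$, so the expansion argument simply cannot be pushed to $[\bbb/n,1-\bbb/n]^d$. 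The sentence ``extreme strips handled separately via crude bounds for low-order statistics'' does not supply a remedy: what is actually needed there is a different mechanism that does not go through Taylor expansion at all. The paper shows that on $N(\bbb n^{-1},\delta_n)$ both $\Cb_n/g_\omega$ and $\bar\Cb_n/g_\omega$ are individually $o_P(1)$ (Lemmas~4.9 and~4.10), using monotonicity and Lipschitz bounds of the type $|C_n(\vect u)-C_n(0,u_2,\dots,u_d)|\le G_{n1}\{G_{n1}^-(u_1)\}\approx u_1$, together with an equicontinuity lemma (Lemma~4.11) that lets one move from a boundary point $\vect u$ to a nearby bulk point $\vect u'\in[n^{-1/2},1-n^{-1/2}]^d$ where the expansion is valid. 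This boundary analysis is the bulk of the work and is absent from your proposal.

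For the second assertion, the map $(\alpha_n,(\alpha_{nj})_j)\mapsto \bar\Cb_n/\tilde g_\omega$ is not continuous for the sup-norm on $\alpha_n$: Condition~4.2 only gives \emph{unweighted} weak convergence of the full process $\alpha_n$, and dividing $\alpha_n(\vect u)-\alpha_n(\vect u^{(j)})$ by $g_\omega(\vect u)$ is an unbounded operation even after the algebraic cancellations you note on the zero set. The paper circumvents this by proving asymptotic equicontinuity of $\bar\Cb_n/\tilde g_\omega$ directly (equation~(4.2) in Lemma~4.11, which again relies on the boundary Lemma~4.10), and then combining with fidi convergence from Condition~4.2. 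Your observation that $\bar\Cb_n$ vanishes on $\{g_\omega=0\}$ is correct and is used in the paper, but it does not by itself deliver the required tightness.
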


\begin{proof}[Proof of Theorem~\ref{theo:weightalpha}.]
The theorem is a mere consequence of Proposition~\ref{prop:wdcheckmod} and Theorem~\ref{theo:weight}.
\end{proof}

\subsection{Proof of Proposition~\ref{prop:wdcheckmod}} \label{subsec:alphacond}
For an $r$-dimensional random vector $(Y_1, \dots Y_r)'$, define the $r$th order joint cumulant by
\[
\cum(Y_1, \dots Y_r) = \sum_{\{ \nu_1, \dots , \nu_p\}} (-1)^{p-1} (p-1)! \mathbb{E}\big  (\prod_{j \in \nu_1} Y_j\big) \times  \dots \times \mathbb{E}\big(\prod_{j \in \nu_p} Y_j\big),
\]
where the summation extends over all partitions $\{ \nu_1, \dots , \nu_p \}$, $p\in \{1, \dots , r\}$, of $\{ 1, \dots,r \}$. The following lemma will be one of the main tools for establishing Condition~\ref{cond:modulus} under exponentially alpha-mixing.

\begin{lemma} \label{lem:kleyvoly}
If $Y_1, Y_2, \dots$ is a strictly stationary sequence of random variables with $\vert Y_i \vert \leq K < \infty$ and if there exist constants $\rho \in (0,1)$ and $K' < \infty$ such that for any $p \in \mathbb{N}$ and arbitrary $i_1, \dots , i_p \in \mathbb{Z}$
\[
\vert \cum (Y_{i_1}, \dots , Y_{i_p})\vert \leq K' \rho^{\max_{k,\ell} \vert i_k - i_\ell\vert},
\]
then, there exist constants $C_1, C_2< \infty$ only depending on $K,K'$ and $\vert \nu_r \vert$ such that
\[
\Big \vert \cum \Big (\sum_{i=1}^n Y_i, j \in \nu_r\Big ) \Big \vert \leq C_1(n+1)\eps (\vert \log \eps \vert +1 )^{C_2},
\]
where $\eps = \mathbb{E}[ \vert Y_i \vert ]$.
\end{lemma}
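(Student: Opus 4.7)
The plan is to expand the $r$-fold cumulant (writing $r := |\nu_r|$) of $S_n := \sum_{i=1}^n Y_i$ by multilinearity,
\[
\cum(S_n, \ldots, S_n) = \sum_{i_1, \ldots, i_r = 1}^n \cum(Y_{i_1}, \ldots, Y_{i_r}),
\]
and then control each summand by combining two bounds. The first is the hypothesis, $|\cum(Y_{i_1}, \ldots, Y_{i_r})| \le K' \rho^{D}$ with $D := \max_{k,\ell} |i_k - i_\ell|$, which decays geometrically in the diameter. The second is a ``trivial'' moment bound of order $\eps$, which dominates when $D$ is small. An optimized split between the two regimes will produce the logarithmic correction $(|\log\eps|+1)^{C_2}$.

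To establish the moment bound, I would write $\cum(Y_{i_1}, \ldots, Y_{i_r})$ through the partition formula recalled in the paper and use the elementary estimate $|\Eb[\prod_{j \in B} Y_{i_j}]| \le K^{|B|-1}\eps$ valid for every block $B$ of every partition $\pi$ of $\{1, \ldots, r\}$. Multiplying over blocks gives a factor $K^{r-|\pi|}\eps^{|\pi|}$; using $\eps \le K$ one rewrites $\eps^{|\pi|} \le \eps K^{|\pi|-1}$, so the cumulant, being a finite sum over partitions weighted by $(|\pi|-1)!$, is at most $C_{K,r} \eps$ for an explicit constant $C_{K,r}$ (involving the Bell number $B_r$ and $(r-1)!$). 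Combined with the hypothesis,
\[
|\cum(Y_{i_1}, \ldots, Y_{i_r})| \le \min\{ C_{K,r} \eps,\; K' \rho^{D} \}.
\]

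Next I would split the multiple sum at a threshold $T$. On $\{D \le T\}$, fixing $i_1 \in \{1, \ldots, n\}$ and letting the other indices range within $[i_1 - T, i_1 + T]$ leaves at most $n(2T+1)^{r-1}$ tuples, so this piece contributes at most $n(2T+1)^{r-1} C_{K,r} \eps$. On $\{D > T\}$, ordering the indices and fixing the minimum together with the span $D$ leaves at most $D^{r-2}$ interior configurations, hence this piece is at most $r!\, n \sum_{D > T} D^{r-2} K' \rho^{D} = O(n T^{r-2} \rho^{T})$ for fixed $\rho < 1$. Choosing $T := \lceil |\log \eps|/|\log \rho| \rceil$ ensures $\rho^T \le \eps$, so both pieces are bounded by $C_1 (n+1) \eps (|\log \eps| + 1)^{r-1}$, yielding the claim with $C_2 = r - 1 = |\nu_r| - 1$.

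No serious obstacle is expected; the work is essentially bookkeeping to verify that the combinatorial constants (the Bell number, the factor $r!$, and the tail constant $\sum_D D^{r-2}\rho^D$) can be absorbed into $C_1$ and $C_2$ depending only on $K, K', \rho$ and $r$. One should also note that when $\eps$ is not small (say $\eps \ge \rho$), the conclusion is already trivial from the naive estimate $|S_n| \le nK$, so it suffices to treat the regime $\eps \to 0$ where the threshold $T$ above is well-defined and positive.
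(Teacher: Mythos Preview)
Your proposal is correct and follows the standard route for this type of cumulant bound: expand by multilinearity, combine the geometric decay in the diameter with the trivial $O(\eps)$ bound coming from the moment formula, and split at a threshold $T \asymp |\log\eps|$. The paper itself does not give a proof but refers to Lemma~7.4 in \cite{KleVolDetHal14}, and your argument is precisely the kind of bookkeeping that underlies that result; in particular your derivation of the moment bound $|\cum(Y_{i_1},\dots,Y_{i_r})|\le C_{K,r}\eps$ via $|\Eb\prod_{j\in B}Y_{i_j}|\le K^{|B|-1}\eps$ and $\eps\le K$ is exactly what is needed.

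Two minor cosmetic points. First, in your counting on $\{D>T\}$ the ``interior configurations'' bound should be $(D+1)^{r-2}$ rather than $D^{r-2}$, but this is irrelevant for the final estimate. Second, you correctly note that $C_1,C_2$ also depend on $\rho$, which the lemma statement in the paper omits; this is a harmless imprecision in the statement, not a flaw in your argument.
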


\begin{proof}
The proof is almost identical to the proof of Lemma 7.4 in \cite{KleVolDetHal14} and is therefore omitted.
\end{proof}

\begin{proof}[Proof of Proposition~\ref{prop:wdcheckmod}]
The weak convergence result in Condition~\ref{cond:weak} has been shown in Theorem 7.3 in \cite{Rio00}.

Regarding Condition~\ref{cond:quantile}, note that exponentially alpha-mixing implies that $\alpha^{\scriptscriptstyle [\vect X]}(k)=O(k^{-b-\delta})$ for any $b>1+\sqrt 2$ and any $\delta>0$. Therefore, by Theorem~3.1 in \cite{ShaYu96}, 
\[
	\sup_{u\in[0,1]} \left| \frac{\sqrt n \{ G_{nj}(u) - u \} }{ \{u(1-u)\}^{(1-1/b)/2}} \right| =O_P(1) 
\]
Since $(1-1/b)/2$ converges to $1/2$ for $b\to \infty$, we indeed have the first display in Condition~\ref{cond:quantile} with $\theta_2=1/2$. Regarding the second display,  \cite{CsoYu96} have shown that
\begin{align*}
\sup_{u \in [\delta_n , 1-\delta_n]} \left| \frac{ \sqrt n \{ G_{n \dimi }^-(u) -u \}}{\{u(1-u)\}^{(1-1/b)/2} } \right| =O_P(1),
\end{align*}
for $\delta_n=n^{-b/(1+b)} \to n^{-1}$ as $b \to \infty$, which implies that we may choose $\theta_3=1$.

Finally, consider Condition~\ref{cond:modulus}. 
It follows from a simple multivariate extension of Proposition~3.1 in \cite{KleVolDetHal14} that, in our case of an exponentially alpha-mixing sequence $(\vect X_i)_{i \in \mathbb{Z}}$,
there exist constants $\rho \in (0,1)$ and $K < \infty$ such that, for any $p\in\N$ and any arbitrary hyper-rectangles $A_1, \dots , A_p \subset \mathbb{R}^d$ and arbitrary $i_1, \dots ,i_p \in \mathbb{Z}$,
\begin{align} \label{eq:cum}
\vert \cum (\ind \{ \vect X_{i_1}\in A_1 \}, \dots , \ind \{ \vect  X_{i_p} \in A_p\}) \vert \leq K \rho^{\max_{k,\ell} \vert  i_k - i_\ell \vert}.
\end{align}
The latter display will be the main tool to establish  Condition~\ref{cond:modulus}.
First, decompose
\begin{align*} 
M_n(\delta_n, \mu)
= 
\sup_{\vert \vect u - \vect v \vert \leq \delta_n} \frac{\vert \alpha_n(\vect u ) - \alpha_n (\vect v) \vert }{\vert \vect u - \vect v \vert^\mu \vee n^{-\mu}} 
= 
\max\{S_{n1}, S_{n2}\}
\end{align*}
where
\begin{align*}
S_{n1}= \sup_{n^{-1} \leq \vert\vect u - \vect v \vert \leq \delta_n} \frac{\vert \alpha_n(\vect u) - \alpha_n(\vect v) \vert }{\vert \vect u -\vect v \vert^\mu}, 
\quad
S_{n2}  =\sup_{ \vert \vect u - \vect v \vert \leq n^{-1}}  n^\mu \vert \alpha_n(\vect u) - \alpha_n (\vect u) \vert .
\end{align*}
It suffices to show that $S_{n1}=o_P(1)$ and $S_{n2}=o_P(1)$ as $n \to \infty$.

First consider $S_{n2}$. 
We will show that, for any $\ell \in \mathbb{N}$ and any $\beta \in (0,1)$, there exist constants $K_1$ and $K_2$ only depending on $d$, $\ell$, $\beta$ and the constants in \eqref{eq:cum} such that 
\begin{align}\label{smallvalues}
\Prob\Big(\sup_{\vert \vect u - \vect v \vert \leq n^{-1}} \vert \alpha_n(\vect u)- \alpha_n(\vect v) \vert > \eps\Big) 
\le 
3 \ind(n^{-1/2}>K_1 \eps) + K_2 \eps^{-2\ell} n^{1-\beta \ell}.
\end{align}
Indeed, $S_{n2}=o_P(1)$ follows by setting $\eps = n^{-\mu} \eps'$, by choosing $\beta>2\mu$ and by finally choosing $\ell$ sufficiently large.

In order to prove \eqref{smallvalues}, we begin by bounding the left-hand side of that display by
\begin{multline*}
 \Prob \Big(\sup_{\vert  \vect u - \vect v \vert \leq n^{-1}} \Big \vert \frac{1}{\sqrt n } \sum_{i=1}^n \ind (\vect U_i \leq \vect u) - \ind(\vect U_i \leq  \vect v) \Big\vert >\frac{\eps}{2}\Big) \\
+ \Prob \Big(\sup_{\vert \vect u - \vect v \vert \leq n^{-1}} \sqrt n \vert C(\vect u ) - C(\vect v) \vert > \frac{\eps }{2} \Big),
\end{multline*}
where the second probability is smaller than $ \ind(n^{-1/2}> \frac{\eps}{2}) $ by Lipschitz-continuity of $C$. 
Furthermore, we have
\begin{align*}
&\hspace{-1cm} \sup_{\vert  \vect u - \vect v \vert < n^{-1}} \Big \vert \frac{1}{\sqrt n } \sum_{i=1}^n \ind(\vect U_i \leq \vect u) - \ind(\vect U_i \leq \vect v) \Big \vert \\
&\leq  \sum_{\dimi=1}^d \sup_{0 \leq v_\dimi - u_\dimi \le n^{-1}} \frac{1}{\sqrt n } \sum_{i=1}^n \ind (U_{i\dimi} \le v_\dimi) - \ind(U_{i\dimi} \le u_\dimi)  \\
&= \sum_{\dimi=1}^d \sup_{0 \leq v_\dimi - u_\dimi \le n^{-1}} \sqrt n \{ G_{n\dimi}(v_\dimi) -  G_{n\dimi}(u_\dimi)\} \\
&\le \sum_{\dimi=1}^d \sup_{|v_\dimi - u_\dimi |\le n^{-1}} \sqrt n | G_{n\dimi}(v_\dimi) -  G_{n\dimi}(u_\dimi) - (v_\dimi-u_\dimi) | + \frac{d}{\sqrt n} \\
&= \sum_{\dimi=1}^d \sup_{|v_\dimi - u_\dimi |\le n^{-1}} |\alpha_{n\dimi}(v_\dimi)-\alpha_{n\dimi}(u_\dimi) | + \frac{d}{\sqrt n} .
\end{align*}
We now proceed similar as in the proof of Lemma 8.6 in \cite{KleVolDetHal14} to bound the sum on the right-hand side. Set $M_n= \{ 0,\frac{1}{n}, \frac{2}{n},\dots,1 \} $. Monotonicity of $G_{n \dimi}$ yields 
\begin{multline*}
 \sup_{\vert u_j -v_j \vert \leq n^{-1}} \sqrt n \vert G_{n \dimi }(u_j) - G_{n\dimi }(v_j) -(u_j-v_j) \vert   \\
\le  
\max_{u_j, v_j \in M_n : \vert  u_j - v_j \vert \leq 2/n} \sqrt n \vert G_{n \dimi }(u_j) - G_{n \dimi }(v_j) - (u_j-v_j) \vert  + 2/\sqrt n.
\end{multline*}
Therefore, we get
\begin{multline*}
  \Prob \Big(\sup_{\vert  \vect u - \vect v \vert \leq n^{-1}} \Big \vert \frac{1}{\sqrt n } \sum_{i=1}^n \ind (\vect U_i \leq \vect u) - \ind(\vect U_i \leq  \vect v) \Big \vert >\frac{\eps}{2}\Big) \\
\leq \Prob\Big(\sum_{\dimi =1}^d \max_{u_j, v_j \in M_n : \vert  u_j - v_j \vert \leq 2/n} |\alpha_{n\dimi}(v_\dimi)-\alpha_{n\dimi}(u_\dimi) |  > \frac{\eps}{6} \Big)  \\
+\ind \Big( n^{-1/2}> \frac{\eps}{6d}\Big)+ \ind \Big(n^{-1/2}> \frac{\eps }{ 12}\Big) .
\end{multline*}
Now, note that the set $\{ (u,v)\in M_n^2 : \vert u-v \vert \leq 2/n \}$ contains $O(n)$ elements. Since  $\mathbb{E}(\max_{i=1, \dots , m} \vert Y_i \vert^p)\leq m \times \max_{i=1, \dots, m} \mathbb{E}(\vert Y_i \vert^p)$  for any random variables $Y_1, \dots , Y_m$, we can conclude that  
\begin{align*}
&\hspace{-0.8cm} \Prob\Big(\sum_{\dimi =1}^d \max_{u_j, v_j \in M_n : \vert  u_j - v_j \vert \leq 2/n} |\alpha_{n\dimi}(v_\dimi)-\alpha_{n\dimi}(u_\dimi) |> \frac{\eps }{8}\Big)\\
&\le \sum_{\dimi =1}^d \Prob\Big( \max_{u_j, v_j \in M_n : \vert  u_j - v_j \vert \leq 2/n} |\alpha_{n\dimi}(v_\dimi)-\alpha_{n\dimi}(u_\dimi) |> \frac{\eps }{8d}\Big)\\
&\le \sum_{\dimi =1}^d (8d)^{2\ell} \eps^{-2\ell} \mathbb{E}\Big [  \max_{u_j, v_j \in M_n : \vert  u_j - v_j \vert \leq 2/n} |\alpha_{n\dimi}(v_\dimi)-\alpha_{n\dimi}(u_\dimi) |^{2\ell}\Big] \\
&\le \const \times \eps^{-2\ell} \sum_{\dimi =1}^d  n  \sup_{ \vert  u_j - v_j \vert \leq 2/n} \mathbb{E}\Big [ |\alpha_{n\dimi}(v_\dimi)-\alpha_{n\dimi}(u_\dimi) |^{2\ell}\Big].
\end{align*}
The assertion in \eqref{smallvalues} now follows from an inequality in the proof of Lem\-ma~8.6 in \cite{KleVolDetHal14}. These authors showed that, if \eqref{eq:cum} is satisfied, then, for any $\ell \in \mathbb{N}$, there exist constants $c_1$ and $c_2$ which only depend on $\ell$ and the constants in \eqref{eq:cum} such that
\begin{align*}
\sup_{u_j,v_j \in [0,1]: \vert u_j-v_j \vert \leq \delta} \mathbb{E} \vert \alpha_{nj}(u_j) - \alpha_{nj}(v_j) \vert^{2\ell}  \leq c_2  [ \{ \delta ( 1+ \vert  \log \delta  \vert^{c_1}) \} \vee n^{-1 } ]^{\ell }.
\end{align*}
Set $\delta=2/n$ and exploit that $\log n \le n^{(1-\beta)/c_1}$ for $\beta \in(0,1)$ to get rid of the logarithmic term on the right-hand side to finally  arrive at \eqref{smallvalues}.

It remains to be shown that $S_{n1}=o_P(1)$. We have
\begin{align*} 
&\hspace{-0.8cm}\Prob \Big (\sup_{n^{-1}\leq \vert \vect u - \vect v \vert \leq \delta_n} \frac{\vert  \alpha_n(\vect u) - \alpha_n(\vect v) \vert }{\vert \vect u - \vect v \vert ^\mu} > \eps \Big ) \nonumber \\
\le& \ \Prob \Big ( \max_{k:n^{-1} <2^{-k}\delta_n } \ \sup_{2^{-(k+1)}\delta_n \leq \vert \vect u - \vect v \vert \leq 2^{-k}\delta_n} \frac{\vert \alpha_n(\vect u) -\alpha_n(\vect v) \vert}{ \vert \vect u - \vect v \vert^\mu}>\eps \Big )\nonumber\\
\leq &\sum_{k:n^{-1}< 2^{-k}\delta_n } \Prob \Big ( \sup_{2^{-(k+1)}\delta_n \leq \vert \vect u- \vect v \vert \leq 2^{-k}\delta_n} \vert \alpha_n(\vect u) - \alpha_n(\vect v) \vert > \eps (2^{-k}\delta_n)^\mu 2^{-\mu} \Big)\nonumber\\
\leq & \sum_{k: n^{-1}< 2^{-k} \delta_n} \Prob \Big ( \sup_{\vert \vect u - \vect v \vert \leq 2^{-k}\delta_n} \vert \alpha_n(\vect u) - \alpha_n(\vect v) \vert > \eps (2^{-k}\delta_n)^\mu 2^{-\mu} \Big). 
\end{align*}
Therefore, we only have to show, that
\begin{align} \label{eq:sumksum}
\sum_{k: n^{-1}< 2^{-k} \delta_n} \Prob \Big ( \sup_{\vert \vect u - \vect v \vert \leq 2^{-k}\delta_n} \vert \alpha_n(\vect u) - \alpha_n(\vect v) \vert > \eps (2^{-k}\delta_n)^\mu 2^{-\mu} \Big) =o(1).
\end{align}

We will show later that, for any $L \in \mathbb{N}$ and for any $\gamma \in (0,1/2)$,  there exists a constant $K=K(\gamma, L)>0$ such that, for all $\vect u , \vect v\in [0,1]^d$ with $\vert \vect u - \vect v \vert  \geq n ^{-1}$, 
\begin{align} \label{eq:condmoment}
\Vert \alpha_n(\vect u ) - \alpha_n(\vect v) \Vert_{2L} \leq K \vert \vect u - \vect v \vert^\gamma =: K \mathrm{d}(\vect u ,\vect v),
\end{align}
where $\Vert X \Vert_p=\mathbb{E} [\vert X\vert ^p]^{1/p}$.
Note that the packing number $D(\eps, \mathrm{d})$ of the metric space $([0,1]^d, \mathrm{d})$ satisfies $D(\eps, \mathrm{d}) \le \const \times \eps^{-d / \gamma}$. Then,
using the notation $\Psi (x) = x^{2L}$, $\Psi^{-1}(x)=x^{1/(2L)}$, $\delta= (2^{-k}\delta_n)^\gamma$ and $\bar \eta=2n^{-\gamma}$, Lemma~7.1 in \cite{KleVolDetHal14} yields the existence of a random variable $S_1$  such that 
\begin{multline*}
 \Prob \Big ( \sup_{\vert \vect u - \vect v \vert \leq 2^{-k}\delta_n} \vert \alpha_n(\vect u) - \alpha_n(\vect v) \vert > \eps (2^{-k}\delta_n)^\mu 2^{-\mu} \Big) \\
\leq 
\Prob\Big(S_1 > (2^{-k}\delta_n)^\mu2^{-\mu -1} \eps\Big) \hspace{5cm}\\
+ \Prob\Big(2 \sup_{\vert \vect u - \vect v \vert \leq n^{-1} , \vect u \in \tilde T } \vert \alpha_n(\vect u) - \alpha_n(\vect v)\vert > (2^{-k} \delta_ n)^\mu 2^{-\mu - 1}\eps\Big), 
\end{multline*} 
where $\tilde T$ denotes a finite set of cardinality $O(n^d)$ and where, for any $\eta > \bar \eta$,
\begin{multline*}
\Prob(\vert S_1 \vert  > (2^{-k}\delta_n)^\mu2^{-\mu -1} \eps)  \\
\leq
\const \times \bigg [ \frac{\int_0^\eta \eps^{-\frac{d}{2\gamma  L } } \, \mathrm{d}\eps + \{ (2^{-k } \delta_n )^{\gamma} + 4n^{-\gamma}\} \eta^{-\frac{d}{\gamma L} } }{ (2^{-k}\delta_n)^\mu2^{-\mu -1} \eps} \bigg ]^{2L}.
\end{multline*}
Set $\eta= 2 (2^{-k}\delta_n)^{\gamma/(1+\frac{d}{\gamma2 L})}$, choose $\gamma$ and $L$ such that $d<2\gamma L$ and note that $4n^{-\gamma} \le 4(2^{-k}\delta_n)^\gamma$.  Then
\begin{align*}
 \Prob(\vert S_1 \vert  > (2^{-k}\delta_n)^\mu2^{-\mu -1} \eps) 
 \leq 
\const \times \Big \{ (2^{-k}\delta_n)^{-\mu + \gamma \frac{1-\frac{d}{2\gamma L}}{1+ \frac{d}{2\gamma L}}} \Big \}^{2L},
\end{align*}
where the constant may depend on $\eps, \gamma, \mu, d, L$.
Therefore, choosing $L$ and $ \gamma$ sufficiently large, we obtain that 
\[ \Prob(\vert S_1 \vert  > (2^{-k}\delta_n)^\mu2^{-\mu -1} \eps) \leq \const \times (2^{-k}\delta_n)^{ \kappa}
\]
 for some $\kappa >0$.

Furthermore, \eqref{smallvalues} and the fact that $2^{-k} \delta_n \ge n^{-1}$ implies that
\begin{multline*}
\Prob \Big(2 \sup_{\vert \vect u - \vect v \vert \leq n^{-1} , \vect u \in \tilde T } \vert \alpha_n(\vect u) - \alpha_n(\vect v) \vert > (2^{-k} \delta_ n)^\mu 2^{-\mu - 1} \eps \Big) \\
 \leq \const \times n^{-\bar \beta} + 3 \ind ( n^{\mu - 1/2 } > \const)
\end{multline*}
for some $\bar \beta>0$, by choosing $\beta \in (2\mu,1)$ and $\ell$ sufficiently large.  Therefore,
 \begin{multline*}
\sum_{k: n^{-1}< 2^{-k} \delta_n} \Prob \Big ( \sup_{\vert \vect u - \vect v \vert \leq 2^{-k}\delta_n} \vert \alpha_n(\vect u) - \alpha_n(\vect v) \vert > \eps (2^{-k}\delta_n)^\mu 2^{-\mu} \Big)\\
 \leq \const \left\{\log(n) \{n^{- \bar \beta} +3 \ind (n^{\mu -1/2}> \const)\} + \delta_n^{\kappa} \sum_{k=0}^\infty 2^{-k \kappa} \right\} =o(1),
\end{multline*}
where the logarithmic term is due to the fact that there are at most $O(\log n)$ summands such that $(2^{-k}\delta_n) > n^{-1}$. The last display is exactly \eqref{eq:sumksum}.

Finally, it remains to be shown that \eqref{eq:condmoment} is satisfied. For $i=1, \dots , n$, let $A_i(\vect u , \vect v) = \ind(\vect U_i  \leq \vect u) - \ind(\vect U_i \leq \vect v) - \{C(\vect u )- C(\vect v)\}$.
Then, by Theorem~2.3.2 in \cite{Bri75}, 
 \begin{multline*}
\mathbb{E}\{\alpha_n(\vect u) - \alpha_n(\vect v)\}^{2L} 
= 
n^{-L} \mathbb{E}\Big \{ \sum_{i=1}^n A_i(\vect u , \vect v ) \Big  \}^{2L} \\
= 
n^{-L} \cum \Big( \prod_{j=1}^{2L } \sum_{i=1}^n A_i(\vect u , \vect v) \Big) \\
= n^{-L}\sum_{\nu_1, \dots ,\nu_R} \prod_{r=1}^R \cum\Big ( \sum_{i=1}^n A_i(\vect u , \vect v), j \in \nu_r\Big),
\end{multline*}
where the sum runs over all partitions of the set $\{ 1, \dots , 2L \}$ and where $\cum(Y_j , j \in \nu)$ denotes the joint cumulant of all  random variables $Y_j$ with $j \in \nu$. Note that, for $\nu_r$ with $\vert \nu_r \vert =1$, we have $\cum\big ( \sum_{i=1}^n A_i(\vect u , \vect v), j \in \nu_r\big)= \mathbb{E}\sum_{i=1}^n A_i (\vect u , \vect v) = 0$, whence it is sufficient to consider $R \leq L$. In that case, an application of Lemma~\ref{lem:kleyvoly} implies that
there exist constants $0<C, C'<\infty$ such that 
\begin{align*}
\cum\Big ( \sum_{i=1}^n A_i(\vect u , \vect v), j \in \nu_r\Big)
&\leq 
C (n+1) \vert \vect u - \vect v \vert (1+ \big \vert \log\vert \vect u - \vect v \vert \big \vert  )^{C'} \\
&\leq 
\bar K (n+1)\vert \vect u -\vect v\vert^{2\gamma}.
\end{align*}
Hence, for any $\vect u , \vect v \in [0,1]^d$ such that $\vert \vect u - \vect v \vert > n^{-1}$,
\begin{multline*}
\mathbb{E}\{\alpha_n(\vect u) - \alpha_n(\vect v)\}^{2L} 
\leq 
\const \times  \sum_{\nu_1, \dots ,\nu_R, R \le L} (n+1)^{R-L} \vert \vect  u - \vect v \vert^{2R \gamma}  \\
\leq 
\const \times \vert  \vect  u - \vect v \vert ^{2L \gamma},
\end{multline*}
which is exactly \eqref{eq:condmoment}.
\end{proof}

\subsection{Proof of Theorem~\ref{theo:weight}} \label{subsec:proofweight}
Throughout the proof, we will use the following additional notations.
Set
\[
\ec(\vect u) 
= G_n \{ \vect G_n^-(\vect u) \}, \qquad \vect G_n^-(\vect u) = \left( G_{n1}^{-}(u_1), \dots, G_{nd}^{-}(u_d) \right)
\]
and define a version of the empirical copula process based on $C_n$ by
\[
\vect u \mapsto \Cb_n(\vect u) = \sqrt n \{ \ec(\vect u) - C(\vect u) \}.
\]
Moreover,  for $0<a<b<1/2$, define
\[
	N(a,b) = \{ \vect u \in [0,1]^d \vert a <  g_1(\vect u)\leq b\}.
\]
Note that $[0,1]^d = \{ \vect u: g_1(\vect u)=0\} \cup N(0,a) \cup N(a,1/2)$. The set $N(a,1/2)$ consists of those vectors such that all of their coordinates are larger than $a$ and such that at most $d-2$ coordinates are larger than or equal to $1-a$. In particular, for $d=2$, we have $N(a,1/2)=(a,1-a)^2$.

The proof of Theorem~\ref{theo:weight} will be based on the following sequence of  Lemmas. All convergences are with respect to $n\to\infty$.

\begin{lemma} \label{lem:cnalt}
Under the conditions of Theorem~\ref{theo:weight},
\[
\sup_{\vect u \in N(\bbb n^{-1}, 1/2)}
\Big \vert \frac{ \hat \Cb_n(\vect u)}{g_\omega(\vect u)} - \frac{  \Cb_n(\vect u) } { g_\omega(\vect u) } \Big \vert = o_P(1).
\]
\end{lemma}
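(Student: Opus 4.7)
My plan is to prove this by establishing a \emph{deterministic, uniform} bound of the form $|\hat C_n(\vect u) - C_n(\vect u)| \le d/n$ on the a.s.-event of no ties, and then to divide by the lower bound $g_\omega(\vect u) \ge (\bbb/n)^\omega$ valid on $N(\bbb n^{-1},1/2)$. No probabilistic machinery (in particular none of Conditions \ref{cond:modulus}--\ref{cond:quantile}) is actually needed for this lemma; the continuity of the marginals (guaranteeing distinct ranks almost surely) and the elementary constraint $\omega<1/2$ do all the work.

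For the pointwise bound, the plan is to rewrite both empirical copulas in terms of the integer ranks $R_{i\dimi}:=nG_{n\dimi}(U_{i\dimi})\in\{1,\dots,n\}$. Since $\hat U_{i\dimi}=R_{i\dimi}/(n+1)$, one has $\ind(\hat U_{i\dimi}\le u_\dimi)=\ind(R_{i\dimi}\le\lfloor(n+1)u_\dimi\rfloor)$, while $G_{n\dimi}^{-}(u_\dimi)=U_{(\lceil nu_\dimi\rceil),\dimi}$ gives $\ind(U_{i\dimi}\le G_{n\dimi}^{-}(u_\dimi))=\ind(R_{i\dimi}\le\lceil nu_\dimi\rceil)$. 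The two integer thresholds $\lfloor(n+1)u_\dimi\rfloor$ and $\lceil nu_\dimi\rceil$ differ by at most one; since the ranks $\{R_{i\dimi}\}_{i=1}^n$ are a permutation of $\{1,\dots,n\}$, flipping one coordinate's threshold by $1$ alters at most one observation's $\dimi$-th indicator. Using $\bigcap_\dimi A_\dimi\triangle\bigcap_\dimi B_\dimi\subseteq\bigcup_\dimi(A_\dimi\triangle B_\dimi)$, a union bound over $\dimi=1,\dots,d$ yields the deterministic inequality
\[
n\,|\hat C_n(\vect u)-C_n(\vect u)|\le d \quad\text{for every }\vect u\in[0,1]^d.
\]

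To conclude, I use that $g_\omega=g_1^\omega$ and that, by definition of $N(\bbb n^{-1},1/2)$, we have $g_1(\vect u)>\bbb/n$, hence $g_\omega(\vect u)>(\bbb/n)^\omega$, on the region of interest. Combining with the pointwise bound,
\[
\sup_{\vect u\in N(\bbb n^{-1},1/2)}\left|\frac{\hat\Cb_n(\vect u)-\Cb_n(\vect u)}{g_\omega(\vect u)}\right|\le\frac{\sqrt n\cdot d/n}{(\bbb/n)^\omega}=d\,\bbb^{-\omega}\,n^{\omega-1/2},
\]
which tends to $0$ because the hypotheses of Theorem~\ref{theo:weight} force $\omega<1/2$ (one checks $\theta_1/\{2(1-\theta_1)\}\le 1/2$ whenever $\theta_1\le 1/2$, similarly for $\theta_2$, and $\theta_3-1/2\le 1/2$).

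There is effectively no obstacle: the only care needed is the correct bookkeeping between left-continuous inverses and ranks, and noting that ties occur on a null event by continuity of the marginals, so the deterministic bound holds $\Prob$-almost surely — amply sufficient for $o_P(1)$.
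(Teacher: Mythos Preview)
Your deterministic rank computation on the no-ties event is clean and correct: the two integer thresholds $\lfloor (n+1)u_\dimi\rfloor$ and $\lceil n u_\dimi\rceil$ differ by at most one, and when $\{R_{i\dimi}\}_{i=1}^n$ is a permutation of $\{1,\dots,n\}$ this gives $|\hat C_n(\vect u)-C_n(\vect u)|\le d/n$ uniformly. This is essentially what the paper obtains in its no-ties sub-case. The subsequent division by $g_\omega(\vect u)\ge(\bbb/n)^\omega$ and the check that $\omega<1/2$ are also fine.

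The gap is the premise that the no-ties event has probability one. Continuity of the \emph{marginals} does \emph{not} exclude ties across time in a strictly stationary sequence: the joint law of $(U_{i\dimi},U_{k\dimi})$ for $i\neq k$ may well put mass on the diagonal (think of a Markov chain that, at each step, stays at its current value with probability $p\in(0,1)$ and otherwise draws a fresh value from a continuous law; this chain is geometrically $\alpha$-mixing, has continuous marginals, yet $\Prob(U_{1\dimi}=U_{2\dimi})=p>0$). With ties, the ranks are not a permutation of $\{1,\dots,n\}$, and shifting a threshold by one can flip arbitrarily many indicators, so your key inequality collapses. The paper addresses exactly this: after the no-ties argument it writes ``In the general case\dots'' and bounds the maximal number of ties by $n\sup_u|G_{n\dimi}(u)-G_{n\dimi}(u-)|$, which via \eqref{eq:maxjump} and Condition~\ref{cond:modulus} is $o_P(n^{1/2-\mu})$ for any $\mu\in(\omega,\theta_1)$. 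Hence your assertion that ``none of Conditions~\ref{cond:modulus}--\ref{cond:quantile} is actually needed'' is incorrect: Condition~\ref{cond:modulus} is precisely what controls the tie multiplicity in the time-series setting. Your argument is easily repaired by inserting this step (replace $d/n$ by $d$ times the maximal jump height of $G_{n\dimi}$, then invoke Condition~\ref{cond:modulus}).
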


\begin{lemma} \label{lem:epsdiff}
Under the conditions of Theorem~\ref{theo:weight},
\[
\sup_{\vect u \in N(n^{-1/2}, 1/2)}
 \Big \vert \frac{ \Cb_n(\vect u)}{g_\omega(\vect u)} - \frac{ \bar \Cb_n(\vect u) } { g_\omega(\vect u) } \Big \vert = o_P(1).
\]
\end{lemma}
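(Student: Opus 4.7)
The plan is to start from the decomposition
\[
\Cb_n(\vect u) = \alpha_n(\vect G_n^-(\vect u)) + \sqrt n \bigl\{C(\vect G_n^-(\vect u)) - C(\vect u)\bigr\},
\]
obtained by adding and subtracting $\sqrt n\, C(\vect G_n^-(\vect u))$ inside $\Cb_n$. In view of the definition of $\bar \Cb_n$, it suffices to establish the two weighted uniform approximations
\begin{align*}
\mathrm{(A)}\quad &\sup_{\vect u \in N(n^{-1/2}, 1/2)} \frac{|\alpha_n(\vect G_n^-(\vect u)) - \alpha_n(\vect u)|}{g_\omega(\vect u)} = o_P(1),\\
\mathrm{(B)}\quad &\sup_{\vect u \in N(n^{-1/2}, 1/2)} \frac{\bigl|\sqrt n\{C(\vect G_n^-(\vect u)) - C(\vect u)\} + \sum_{j=1}^d \dot C_j(\vect u)\alpha_n(\vect \udimi)\bigr|}{g_\omega(\vect u)} = o_P(1).
\end{align*}
The lemma then follows by subtraction.

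For (A), Condition~\ref{cond:quantile} applied to each marginal yields, for any $\omega'' \in (0,\theta_2)$ and $\lambda \in(1/2,\theta_3)$, the uniform bound $|G_{nj}^-(u_j) - u_j| = O_P(n^{-1/2})\{u_j(1-u_j)\}^{\omega''}$ on $(n^{-\lambda},1-n^{-\lambda})$, and hence $|\vect G_n^-(\vect u) - \vect u| = O_P(n^{-1/2})$ uniformly on $N(n^{-1/2},1/2)$. Condition~\ref{cond:modulus}, applied with $\mu \in (0,\theta_1)$ and a deterministic $\delta_n \downarrow 0$ dominating that rate, then gives
\[
|\alpha_n(\vect G_n^-(\vect u)) - \alpha_n(\vect u)| \le M_n(\delta_n,\mu)\bigl(|\vect G_n^-(\vect u) - \vect u|^\mu \vee n^{-\mu}\bigr) = o_P(n^{-\mu/2}).
\]
Since $g_\omega(\vect u) \ge n^{-\omega/2}$ on $N(n^{-1/2},1/2)$, this is $o_P(g_\omega(\vect u))$ within the range of $\omega$ stated in the theorem; a more careful version invoking the marginal-by-marginal weight $\{u_j(1-u_j)\}^{\omega''}$ instead of the crude uniform bound yields the sharpest admissible range.

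For (B), I would unfold $C(\vect G_n^-(\vect u)) - C(\vect u)$ by coordinate-wise telescoping: setting $\vect V^{(j)} = (G_{n1}^-(u_1),\dots,G_{nj}^-(u_j),u_{j+1},\dots,u_d)$, the one-dimensional mean value theorem gives $C(\vect V^{(j)}) - C(\vect V^{(j-1)}) = \dot C_j(\vect \xi^{(j)})(G_{nj}^-(u_j) - u_j)$ for intermediate points $\vect \xi^{(j)}$. Multiplying by $\sqrt n$ and summing over $j$, two replacements remain: first, swap $\dot C_j(\vect \xi^{(j)})$ for $\dot C_j(\vect u)$ via the second-derivative bound in Condition~\ref{cond:2nd}, whose boundary blow-up $1/\{u_k(1-u_k)\}$ is absorbed by the quantile factor $\{u_j(1-u_j)\}^{\omega''}$; second, swap $\sqrt n(G_{nj}^-(u_j) - u_j)$ for $-\alpha_{nj}(u_j) = -\alpha_n(\vect \udimi)$ via the weighted Bahadur representation, which follows from the one-dimensional specialization of Condition~\ref{cond:modulus} together with the identity $G_{nj}(G_{nj}^-(u_j)) = u_j + O(1/n)$. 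The hard part is the weighted supremum bookkeeping of these error terms near the boundary, where $g_\omega(\vect u)$ can be as small as $n^{-\omega/2}$: the three conditions of Theorem~\ref{theo:weight} each contribute one factor whose size is governed by $\theta_1$, $\theta_2$, $\theta_3$, and their product is $o_P(g_\omega)$ precisely when $\omega$ lies in the range $\omega<\theta_1/[2(1-\theta_1)]\wedge\theta_2/[2(1-\theta_2)]\wedge(\theta_3-1/2)$ imposed in the theorem.
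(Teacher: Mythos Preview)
Your decomposition and your treatment of (A) match the paper's: write $\Cb_n-\bar\Cb_n=B_{n1}+B_{n2}+B_{n3}$ with $B_{n1}$ the $\alpha_n$-increment, $B_{n2}$ the Taylor remainder and $B_{n3}=\sum_j\dot C_j(\vect u)\{\beta_{nj}(u_j)+\alpha_{nj}(u_j)\}$, and handle $B_{n1}/g_\omega$ via Condition~\ref{cond:modulus} together with $g_\omega\ge n^{-\omega/2}$ and $B_{n3}/g_\omega$ via Lemma~\ref{lem:inverse}. The gap is in your second-derivative replacement for (B). On $N(n^{-1/2},1/2)$ you know only that every coordinate exceeds $n^{-1/2}$ and that at most $d-2$ coordinates lie in $[1-n^{-1/2},1]$; hence up to $d-2$ of the $u_j$ can be arbitrarily close to (or equal to) $1$. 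For such $j$ the weighted bound on $\beta_{nj}$ in Condition~\ref{cond:quantile} is not available, since it covers only $u_j\in(n^{-\lambda},1-n^{-\lambda})$ with $\lambda<\theta_3$, and the mean-value intermediate points may leave $V_j$, so $\ddot C_{jk}$ need not even exist there. Your claim that the blow-up ``is absorbed by the quantile factor $\{u_j(1-u_j)\}^{\omega''}$'' therefore fails precisely at the upper boundary.

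The paper closes this by introducing, for each $\vect u$, the index set $S=\{j:u_j\in[n^{-1/2},1-n^{-\gamma}]\}$ with a suitable $\gamma\in(1/2,\theta_3)$. For coordinates $j\notin S$ one abandons the Taylor expansion altogether: the crude bounds $\dot C_j\in[0,1]$ and Lipschitz continuity of $C$ reduce that piece to $2\sum_{j\notin S}|\beta_{nj}(u_j)|/g_\omega(\vect u)$, which is $o_P(1)$ by the separate auxiliary Lemma~\ref{lem:betabound} establishing $\sup_{u_j\in[1-n^{-\gamma},1]}|\beta_{nj}(u_j)|=o_P(n^{-\omega/2})$ for any $\gamma>1/2$. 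Only on the remaining coordinates $j\in S$, where $u_j\in[n^{-1/2},1-n^{-\gamma}]$ so that Condition~\ref{cond:quantile} (with $\lambda=\gamma$) and Lemma~\ref{lem:uxi} both apply, does one carry out the second-order Taylor expansion; there the bookkeeping you sketch indeed goes through. Without this split and without Lemma~\ref{lem:betabound} your outline does not close.
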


\begin{lemma} \label{lem:cbound}
Under the conditions of Theorem~\ref{theo:weight}, for any $\delta_n \downarrow 0$ such that $\delta_n \ge \bbb n^{-1}$, 
\[ 
\sup_{\vect u \in N(\bbb n^{-1}, \delta_n)} \Big \vert  \frac{\Cb_n (\vect u )}{g_\omega (\vect u)} \Big  \vert  = o_P(1).
\]

\end{lemma}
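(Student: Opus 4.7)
\emph{Plan.} I would start from the identity
\begin{align*}
\Cb_n(\vect u) = \alpha_n(\vect G_n^-(\vect u)) + \sqrt n\bigl\{C(\vect G_n^-(\vect u)) - C(\vect u)\bigr\},
\end{align*}
which follows from $\ec(\vect u) = G_n(\vect G_n^-(\vect u))$, and show that each summand divided by $g_\omega(\vect u)$ is $o_P(1)$ uniformly on $N(\bbb n^{-1}, \delta_n)$. The region splits geometrically into a \emph{lower-face} regime (some coordinate $u_k$ realizes $g_1(\vect u) = u_k \le \delta_n$) and an \emph{upper-face} regime (at least $d-1$ coordinates of $\vect u$ lie in $[1-\delta_n, 1]$); I describe the lower-face argument and flag the upper face as the main obstacle.

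For the deterministic term in the lower-face case, a Taylor expansion gives the pointwise bound $\sum_j |\dot C_j(\xi^{(j)})|\cdot|G_{nj}^-(u_j) - u_j|$. Condition~\ref{cond:quantile} yields $\sqrt n|G_{nj}^-(u_j)-u_j| = O_P(\{u_j(1-u_j)\}^\omega)$. Integrating the bound on $\ddot C_{kj}$ from Condition~\ref{cond:2nd} from $0$ to $u_k$, and using $\dot C_j|_{u_k=0}=0$ for $j\ne k$, yields $|\dot C_j(\vect u)| \le K u_k/(u_j(1-u_j))$, while $|\dot C_k|\le 1$. Division by $g_\omega(\vect u) = u_k^\omega$ (valid in the pure lower-face case) produces an $O_P(u_k^{1-\omega})$ bound, which is $o_P(1)$ uniformly since $u_k \le \delta_n \to 0$ and $\omega < 1$.

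For the empirical term, I would project $\vect G_n^-(\vect u)$ onto the face $\{v_k=0\}$, on which $\alpha_n$ vanishes almost surely because both $C$ and $G_n$ are identically zero there. Writing $\vect v^\star$ for the projection, Condition~\ref{cond:modulus} applied with some $\mu\in(\omega,\theta_1)$ gives
\begin{align*}
|\alpha_n(\vect G_n^-(\vect u))| = |\alpha_n(\vect G_n^-(\vect u))-\alpha_n(\vect v^\star)| \le M_n(G_{nk}^-(u_k),\mu)\bigl(G_{nk}^-(u_k)^\mu \vee n^{-\mu}\bigr),
\end{align*}
and Condition~\ref{cond:quantile} supplies $G_{nk}^-(u_k) = u_k(1+o_P(1))$. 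Since $M_n = o_P(1)$, division by $g_\omega(\vect u) = u_k^\omega$ yields $o_P(u_k^{\mu-\omega}) \le o_P(\delta_n^{\mu-\omega}) = o_P(1)$ when $u_k \ge n^{-1}$, while for the edge range $u_k \in[\bbb/n, n^{-1}]$ the $n^{-\mu}$ branch divided by $g_\omega(\vect u) \ge (\bbb/n)^\omega$ gives $o_P(n^{\omega-\mu}) = o_P(1)$ for $\mu > \omega$.

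The principal obstacle is the upper-face regime, where the nearest face $\{v_m = 1\}$ is not a zero set of $\alpha_n$ but rather the locus on which $\alpha_n$ reduces to a $(d-1)$-dimensional marginal empirical process, so the pushing argument fails directly. I would handle this via inclusion-exclusion: writing $C(\vect u) = 1 + \sum_{\emptyset \ne I \subset \{1,\dots,d\}}(-1)^{|I|}\bar C_I(\vect u_I)$ and similarly for $\ec$, each joint survival function $\bar C_I$ with $|I| \ge 2$ is bounded by $\delta_n$ on the upper face (since at least one coordinate in $I$ lies in $[1-\delta_n,1]$), and the Taylor-plus-oscillation analysis is then run analogously on the survival side, exploiting precisely the \emph{symmetric} form of the bound on $\ddot C_{j_1 j_2}$ in Condition~\ref{cond:2nd} (the minimum over $u_{j_1}(1-u_{j_1})$ and $u_{j_2}(1-u_{j_2})$). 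The leftover marginal contributions of the form $\alpha_{nm}(u_m)$ at the fully projected boundary are then controlled by the univariate part of Condition~\ref{cond:quantile}.
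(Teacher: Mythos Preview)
Your overall decomposition $\Cb_n(\vect u)=\alpha_n(\vect G_n^-(\vect u))+\sqrt n\{C(\vect G_n^-(\vect u))-C(\vect u)\}$ and the face--projection idea for the empirical part are exactly how the paper attacks the ``middle'' portion of $N(\bbb n^{-1},\delta_n)$. However, two genuine gaps remain.

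\textbf{No intermediate scale.} Your analysis tacitly requires Condition~\ref{cond:quantile} on $\beta_{nk}$ at the point $u_k$, and the relation $G_{nk}^-(u_k)=u_k(1+o_P(1))$. Neither is available all the way down to $u_k=\bbb/n$: Condition~\ref{cond:quantile} only controls $\beta_{nj}$ on $(n^{-\lambda},1-n^{-\lambda})$ with $\lambda<\theta_3\le 1$, and even where it applies one only gets $|G_{nk}^-(u_k)/u_k-1|=O_P(n^{-1/2}u_k^{\omega-1})$, which explodes for $u_k$ of order $1/n$. The paper fixes this by inserting a threshold $n^{-\gamma}$ with $1/2+\omega<\gamma<\{2(1-\theta_2)\}^{-1}\wedge\theta_3$ and using a completely different, crude argument on $N(\bbb n^{-1},n^{-\gamma})$: there one does \emph{not} split into empirical and deterministic parts, but simply bounds
\[
|\ec(\vect u)-C(\vect u)|\le \ec(\vect u)+C(\vect u)\le G_{nk}\{G_{nk}^-(u_k)\}+u_k\le 2u_k+o_P(n^{-1/2-\mu})
\]
via monotonicity, Lipschitz continuity and Lemma~\ref{lem:inverse}. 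This gives $|\Cb_n(\vect u)|/u_k^\omega\le 2\sqrt n\,u_k^{1-\omega}+o_P(n^{\omega-\mu})=o_P(1)$ on that inner layer, without ever invoking Condition~\ref{cond:quantile}. Your sketch has no analogue of this step.

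\textbf{Mixed faces in the lower regime.} Even on the region $u_k\ge n^{-\gamma}$, your bound for the deterministic term is incomplete. For $j\ne k$, combining $|\dot C_j|\le Ku_k/\{u_j(1-u_j)\}$ with $|\beta_{nj}(u_j)|=O_P(\{u_j(1-u_j)\}^{\omega'})$ yields a factor $\{u_j(1-u_j)\}^{\omega'-1}$, and if $1-u_j<u_k$ this is \emph{larger} than $u_k^{\omega'-1}$, so the product does not collapse to $O_P(u_k^{1-\omega})$. The paper makes the split $1-u_j<u_k$ versus $1-u_j\ge u_k$ explicit: in the first case one discards the $\ddot C$-bound, uses $|\dot C_j|\le 1$ and controls $|\beta_{nj}(u_j)|/(1-u_j)^{\omega'}$ directly, gaining the small factor $(1-u_j)^{\omega'-\omega}$. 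Moreover, when $1-u_j<n^{-\gamma}$ the weighted quantile bound is again unavailable; the paper isolates this as a third region $M_{n\gamma}^c$ and pushes those coordinates to $1$ (so that $\Cb_n$ at the projected point involves only coordinates in $(n^{-\gamma},1-n^{-\gamma})$), absorbing the projection error via Lipschitz continuity.

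Your proposed inclusion--exclusion on survival functions for the upper face is a different route from the paper's; there one projects $\vect u$ onto $\vect u^{(2)}=(1,u_2,1,\dots,1)$ and writes $\Cb_n(\vect u)$ as a telescoping sum of four pieces $T_{1n},\dots,T_{4n}$, exploiting that $\alpha_n(\vect u^{(2)})$ is a one--dimensional marginal process handled by Condition~\ref{cond:quantile}. Either approach is viable in principle, but both still require the $n^{-\gamma}$ layering described above, which your sketch omits.
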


\begin{lemma} \label{lem:cbound2}
Under the conditions of Theorem~\ref{theo:weight}, for any $\delta_n \downarrow 0$, 
\[ 
\sup_{\vect u \in N(0, \delta_n)} \Big \vert  \frac{\bar \Cb_n (\vect u )}{g_\omega (\vect u)} \Big  \vert  = o_P(1).
\]
\end{lemma}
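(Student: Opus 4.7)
The plan is to exploit that $\bar \Cb_n$ vanishes on the entire zero set of $g_\omega$, and then to control $\bar \Cb_n(\vect u)/g_\omega(\vect u)$ on the shrinking strip $N(0, \delta_n)$ by comparing $\vect u$ to a nearby boundary point and bounding the resulting differences through Conditions~\ref{cond:2nd}, \ref{cond:modulus} and \ref{cond:quantile}. The zero set of $g_\omega$ consists of the $d$ faces $\{u_{j^*} = 0\}$ together with the $d$ curves $\vect u^{(j_0)} = (1, \dots, 1, u_{j_0}, 1, \dots, 1)$. A direct computation shows that $\bar \Cb_n \equiv 0$ on each such face: one uses $\alpha_{nj}(0) = \alpha_{nj}(1) = 0$ for all $j$, the marginal-uniformity identity $\dot C_{j_0}(\vect u^{(j_0)}) = 1$, and the observation that at points $\vect u$ with $u_{j^*} = 0$ one has $\alpha_n(\vect u) = 0$ and $\dot C_k(\vect u) = 0$ for every $k \ne j^*$.

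I would split $N(0, \delta_n)$ into the region $\mathcal R_1$ on which $g_1(\vect u) = \min_j u_j =: u_{j^*}$ (nearest to a lower face) and the region $\mathcal R_2$ on which $g_1(\vect u) = 1 - u_{i_2}$ with $u_{i_2}$ the second smallest coordinate (nearest to the curve $\vect u^{(i_1)}$, where $i_1$ is the index of the smallest coordinate). On $\mathcal R_1$ set $\vect u^*$ equal to $\vect u$ with $u_{j^*}$ replaced by $0$; on $\mathcal R_2$ set $\vect u^* = \vect u^{(i_1)}$. In both cases $\vect u^*$ lies on a zero face, so $\bar \Cb_n(\vect u^*) = 0$ and $|\vect u - \vect u^*|_\infty = g_1(\vect u) \le \delta_n$. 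Writing $\bar \Cb_n(\vect u) = \bar \Cb_n(\vect u) - \bar \Cb_n(\vect u^*)$ then reduces the problem to bounding (i) the empirical-process increment $\alpha_n(\vect u) - \alpha_n(\vect u^*)$ and (ii) correction terms of the form $[\dot C_j(\vect u) - \dot C_j(\vect u^*)]\,\alpha_{nj}(u_j)$ when $u^*_j = u_j$, or $\dot C_j(\vect u)\,\alpha_{nj}(u_j)$ when $u^*_j \in \{0,1\}$ kills the $\vect u^*$-term.

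For (i) I would use a two-regime bound. When $g_1(\vect u) \ge n^{-1}$, Condition~\ref{cond:modulus} applied with some $\mu \in (\omega, \theta_1)$ controls the increment by $M_n(\delta_n, \mu)\, g_1(\vect u)^\mu$, so division by $g_1(\vect u)^\omega$ yields $M_n(\delta_n, \mu)\, \delta_n^{\mu - \omega} = o_P(1)$. When $g_1(\vect u) < n^{-1}$ the modulus is too weak and I fall back on deterministic bounds: on $\mathcal R_1$ the inequalities $G_n(\vect u) \le G_{nj^*}(u_{j^*})$ and $C(\vect u) \le u_{j^*}$ give $|\alpha_n(\vect u)| \le 2\sqrt n\, u_{j^*} + |\alpha_{nj^*}(u_{j^*})|$, while on $\mathcal R_2$ the difference $G_n(\vect u^{(i_1)}) - G_n(\vect u)$ counts observations with $U_{i,j} > u_j$ for some $j \ne i_1$, giving a bound of order $\sqrt n (1 - u_{i_2}) + (1 - u_{i_2})^{\omega'}$. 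After dividing by $g_1(\vect u)^\omega$ the $\sqrt n\, g_1(\vect u)^{1-\omega}$ piece is at most $n^{\omega - 1/2}$, which vanishes precisely because $\omega < 1/2$, while the marginal-process part is $o_P(1)$ by Condition~\ref{cond:quantile} applied with some $\omega' \in (\omega, \theta_2)$.

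For (ii) I would use the integrated second-derivative bound from Condition~\ref{cond:2nd}, which yields $|\dot C_j(\vect u) - \dot C_j(\vect u^*)| \le \min\{2,\, K g_1(\vect u)/[u_j(1-u_j)]\}$, the constant $2$ being the trivial bound on $|\dot C_j|$. Combined with $|\alpha_{nj}(u_j)| \le \const \times \{u_j(1-u_j)\}^{\omega'}$ from Condition~\ref{cond:quantile} and splitting according to whether $u_j(1-u_j) \ge g_1(\vect u)$ or not, each correction term is bounded by $\const \times g_1(\vect u)^{\omega'}$, so division by $g_1(\vect u)^\omega$ produces $O_P(\delta_n^{\omega' - \omega}) = o_P(1)$. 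The hard part will be the regime $g_1(\vect u) < n^{-1}$, where the oscillation modulus of Condition~\ref{cond:modulus} is insufficient and one must carefully combine the crude deterministic bounds with the sharper weighted-marginal bounds from Condition~\ref{cond:quantile}; the constraint $\omega < 1/2$ is used essentially to make $\sqrt n\, g_1(\vect u)^{1-\omega}$ tend to zero in that regime.
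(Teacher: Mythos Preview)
Your proposal is correct and follows essentially the same strategy as the paper's proof: subtract the value of $\bar\Cb_n$ at a nearby boundary point where it vanishes, then control the $\alpha_n$-increment via Condition~\ref{cond:modulus} on an ``outer'' strip and via crude deterministic bounds ($G_n(\vect u)\le G_{nj^*}(u_{j^*})$, Lipschitz continuity of $C$) on an ``inner'' strip, while the derivative-correction terms are handled uniformly through the mean-value theorem, Condition~\ref{cond:2nd} and the weighted marginal bound from Condition~\ref{cond:quantile}. The only organizational differences are that the paper splits the two strips at the threshold $n^{-\gamma}$ with $\gamma\in(1/\{2(1-\omega)\},1)$ rather than at $n^{-1}$, and it first separates the inner/outer regions and then the two geometric cases $g_\omega(\vect u)=u_1^\omega$ versus $g_\omega(\vect u)=(1-u_1)^\omega$, whereas you do the geometric split $\mathcal R_1/\mathcal R_2$ first; both orderings lead to the same estimates.
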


\begin{lemma} \label{lem:contc}
Under the conditions of Theorem~\ref{theo:weight}, for any $\delta_n \downarrow 0$ 
\begin{align} \label{eq:cbtight}
\sup_{\vect u, \vect u' \in [\frac{\bbb}{n}, 1- \frac{\bbb}{n}]^d :  \vert  \vect u - \vect u' \vert \leq \delta_n} \Big \vert \frac{\Cb_n (\vect u)}{g_\omega (\vect u)} - \frac{\Cb_n (\vect u')}{g_\omega ( \vect u')} \Big \vert =o_P(1)
\end{align}
and 
\begin{align} \label{eq:barcbtight}
\sup_{\vect u, \vect u' \in [0, 1]^d :  \vert  \vect u - \vect u' \vert \leq \delta_n}  \Big \vert \frac{\bar \Cb_n (\vect u)}{\tilde g_\omega (\vect u)} - \frac{\bar \Cb_n (\vect u')}{\tilde g_\omega ( \vect u')} \Big \vert =o_P(1)
\end{align}
\end{lemma}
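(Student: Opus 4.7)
I aim to prove \eqref{eq:cbtight} and \eqref{eq:barcbtight} in parallel via a two-region strategy, partitioning the domain at a scale $\eta_n \downarrow 0$ chosen slowly relative to $\delta_n$. Write $H$ generically for either $\Cb_n$ or $\bar\Cb_n$ and $W$ for the corresponding denominator ($g_\omega$ or $\tilde g_\omega$), and denote the generic pair by $(\vect u, \vect v)$ to ease notation. Two preliminary observations: (i) since $g_1$ is $1$-Lipschitz as a pointwise minimum of $1$-Lipschitz maps, we have $|g_\omega(\vect u) - g_\omega(\vect v)| \le c\,|\vect u - \vect v|^\omega$; (ii) $\bar\Cb_n$ vanishes identically on $Z := \{g_\omega = 0\}$, because on $Z$ either some coordinate equals $0$ or at least $d-1$ coordinates equal $1$, which forces both $\alpha_n(\vect u) = 0$ and $\dot C_j(\vect u)\alpha_n(\vect u^{(j)}) = 0$ for every $j$.

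\textbf{Boundary region.} For pairs with $|\vect u - \vect v| \le \delta_n$ and $\min\{g_\omega(\vect u), g_\omega(\vect v)\} \le \eta_n^\omega$, observation (i) yields $g_\omega \le 2\eta_n^\omega$ at both points once $\delta_n$ is small, so both lie in $N(0, 2\eta_n) \cup Z$. Apply Lemma~\ref{lem:cbound} (with $\eta_n$ replaced by $\eta_n \vee \bbb/n$ if needed to meet the lower-bound hypothesis) to dominate $|\Cb_n/g_\omega|$ by $o_P(1)$ on this region for \eqref{eq:cbtight}, and analogously Lemma~\ref{lem:cbound2} for \eqref{eq:barcbtight}, recalling that both ratios equal $0$ on $Z$.

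\textbf{Interior region.} On $\{\min\{g_\omega(\vect u), g_\omega(\vect v)\} > \eta_n^\omega\}$, decompose
\[
\frac{H(\vect u)}{W(\vect u)} - \frac{H(\vect v)}{W(\vect v)} = \frac{H(\vect u) - H(\vect v)}{W(\vect u)} + H(\vect v)\,\frac{W(\vect v) - W(\vect u)}{W(\vect u)\,W(\vect v)}.
\]
The second summand is $O_P(\delta_n^\omega/\eta_n^{2\omega})$: $|W(\vect v) - W(\vect u)| \le c\,\delta_n^\omega$ by (i), both denominators exceed $\eta_n^\omega$, and $|H(\vect v)|$ is $O_P(1)$ uniformly (directly for $\bar\Cb_n$ from Conditions~\ref{cond:weak}--\ref{cond:quantile}; for $\Cb_n$ from Lemma~\ref{lem:epsdiff} together with boundedness of $g_\omega$). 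For the first summand, expand
\[
\bar\Cb_n(\vect u) - \bar\Cb_n(\vect v) = [\alpha_n(\vect u) - \alpha_n(\vect v)] - \sum_{j=1}^d \dot C_j(\vect u)[\alpha_n(\vect u^{(j)}) - \alpha_n(\vect v^{(j)})] - \sum_{j=1}^d [\dot C_j(\vect u) - \dot C_j(\vect v)]\,\alpha_n(\vect v^{(j)}),
\]
and bound: the first two blocks by $o_P(\delta_n^\mu)$ using Condition~\ref{cond:modulus} for any $\mu \in (0, \theta_1)$; the third by $O(\delta_n/\eta_n)$ using Condition~\ref{cond:2nd}, where the min-form of the $\ddot C_{ji}$-bound together with the fact that at most one coordinate can lie above $1-\eta_n$ on $\{g_1 > \eta_n\}$ tames the potential blow-up of $\ddot C$. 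For $H = \Cb_n$, Lemma~\ref{lem:epsdiff} absorbs the $\Cb_n - \bar\Cb_n$ discrepancy at an $o_P(1)$ cost. After division by $W(\vect u) \ge \eta_n^\omega$, the first summand contributes $o_P(\delta_n^\mu/\eta_n^\omega) + O_P(\delta_n/\eta_n^{1+\omega})$.

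\textbf{Combining and main obstacle.} Set $\eta_n = \delta_n^\rho$ for a fixed $\rho \in (0,\, \min\{\mu/\omega,\, 1/(1+\omega),\, 1/2\})$. The constraint $\omega < \theta_1/[2(1-\theta_1)]$ combined with the freedom to choose $\mu$ close to $\theta_1$ guarantees $\mu/\omega > 1$, so this interval is non-empty; each of the three error terms then becomes a positive power of $\delta_n$, proving that the interior contribution is $o_P(1)$. Combined with the boundary estimate this yields \eqref{eq:cbtight} and \eqref{eq:barcbtight}. The main obstacle is the interior analysis, where three competing $\eta_n$-exponents must simultaneously yield $o_P(1)$ under a single scaling $\rho$; this is only possible thanks to the constraint on $\omega$, which forces both $\mu$ and the Hölder exponent of $g_\omega$ to beat the Lipschitz blow-up rate of $\dot C_j$. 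A subsidiary technical point is the $O(\delta_n/\eta_n)$ Lipschitz bound for $\dot C_j$ on the non-convex set $\{g_1 > \eta_n\}$, which demands a careful coordinate-by-coordinate argument exploiting that at most one coordinate can lie close to~$1$.
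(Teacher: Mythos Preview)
Your overall architecture matches the paper's exactly: a boundary/interior split, Lemmas~\ref{lem:cbound} and~\ref{lem:cbound2} on the boundary, and in the interior the same algebraic decomposition followed by the three-block expansion of $\bar\Cb_n(\vect u)-\bar\Cb_n(\vect v)$. The paper even makes the specific choice $\eta_n=\delta_n^{1/2}$, which your parameter $\rho$ generalises.

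There is, however, a genuine gap in your treatment of the third block $\sum_j[\dot C_j(\vect u)-\dot C_j(\vect v)]\,\alpha_n(\vect v^{(j)})$. First, the claim that on $\{g_1>\eta_n\}$ at most one coordinate can exceed $1-\eta_n$ is false for $d\ge 4$; the correct statement is ``at most $d-2$''. Second, and more seriously, even for $d\le 3$ your Lipschitz bound $|\dot C_j(\vect u)-\dot C_j(\vect v)|=O(\delta_n/\eta_n)$ cannot be obtained from Condition~\ref{cond:2nd} alone: in the mean-value expansion the diagonal term $\ddot C_{jj}(\vect\xi)(u_j-v_j)$ is bounded only by $K\delta_n/[\xi_j(1-\xi_j)]$, and when $u_j,v_j$ are close to $1$ this is not $O(\delta_n/\eta_n)$. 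The min-form of the bound on $\ddot C_{j_1 j_2}$ is useless here since $j_1=j_2=j$.

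The paper resolves this by a case split on $1-v_j$ and by invoking Condition~\ref{cond:quantile}, which you never use for this block. When $1-v_j<\eta_n$ one forgoes the mean value theorem entirely, uses $|\dot C_j(\vect u)-\dot C_j(\vect v)|\le 2$, and exploits that $|\alpha_n(\vect v^{(j)})|=|\alpha_{nj}(v_j)|\le O_P(1)\{v_j(1-v_j)\}^{\omega'}$ for some $\omega'\in(\omega,\theta_2)$, yielding a contribution $O_P(\eta_n^{\omega'-\omega})=o_P(1)$ after division by $g_\omega(\vect u)\ge\eta_n^\omega$. When $1-v_j\ge\eta_n$, both $\xi_j$ and $1-\xi_j$ are at least of order $\eta_n$, so $|\ddot C_{jj}(\vect\xi)|=O(1/\eta_n)$ and your mean-value argument goes through. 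Once this case distinction is inserted (and the exponent bookkeeping redone accordingly), your scheme becomes essentially identical to the paper's.
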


\begin{proof}[Proof of Theorem~\ref{theo:weight}]
Set $\delta_n  = d n^{-1/2}$. 
Given $\vect u \in [\tfrac{\bbb}{n},1-\tfrac{\bbb}{n}]^d$, choose $\vect u' \in [\frac{1}{\sqrt n} ,1- \frac{1}{\sqrt n} ]^d$  such that $|\vect u - \vect u'| \le \delta_n$. Since
\begin{multline*}
\Big \vert \frac{ \hat \Cb_n(\vect u)}{g_\omega(\vect u)} - \frac{ \bar \Cb_n(\vect u) } { g_\omega(\vect u) } \Big \vert 
\le
\Big \vert \frac{ \hat \Cb_n(\vect u)}{g_\omega(\vect u)} - \frac{  \Cb_n(\vect u) } { g_\omega(\vect u) } \Big \vert 
+
\Big \vert \frac{ \Cb_n(\vect u)}{g_\omega(\vect u)} - \frac{  \Cb_n(\vect u') } { g_\omega(\vect u') } \Big \vert \\
+
\Big \vert \frac{ \Cb_n(\vect u')}{g_\omega(\vect u')} - \frac{ \bar \Cb_n(\vect u') } { g_\omega(\vect u') } \Big \vert  
+
\Big \vert \frac{ \bar \Cb_n(\vect u')}{g_\omega(\vect u')} - \frac{ \bar \Cb_n(\vect u) } { g_\omega(\vect u) } \Big \vert 
\end{multline*}
the first assertion of the theorem follows from Lemma~\ref{lem:cnalt},~\ref{lem:epsdiff} and~\ref{lem:contc}.

Next, let us show that $\bar \Cb_n /\tilde g_\omega \weak \Cb_C / \tilde g_\omega$ in $(\ell^\infty([0,1]^d), \Vert \cdot \Vert_\infty)$.
From Problem 2.1.5 in \cite{VanWel96} and Lemma \ref{lem:contc} we obtain that $\bar \Cb_n/\tilde g_\omega$ is asymptotically equicontinuous. Furthermore, Condition \ref{cond:weak} yields that the finite dimensional distributions of $\bar \Cb_n / \tilde g_\omega$ converge weakly to the finite dimensional distributions of $\Cb_C/\tilde g_\omega$. Note that $\Cb_C/ \tilde g_\omega(\vect u)=\bar \Cb_n /\tilde g_\omega(\vect u)=0$ for any $\vect u$ with at least one entry equal to $0$ or with $d-1$ entries equal to~$1$.
\end{proof}

\begin{proof}[Proof of Lemma~\ref{lem:cnalt}]
It suffices to show that, there exists $\mu \in (\omega, \theta_1)$ such that 
\[
\sup_{\vect u \in [0,1]^d}\vert \hat C_n (\vect u) - C_n(\vect u) \vert = o_P(n^{-1/2-\mu}).
\]
Note that $F_{nj}(X_{ij}) = G_{nj}(U_j)$, whence
\begin{align*}
&\hspace{-0.7cm} \sup_{\vect u \in [0,1]^d} \vert \hat C_n (\vect u) - C_n (\vect u) \vert  \\
& \leq 
\sup_{\vect  u \in [0,1]^d} \left \vert \frac{1}{n} \sum_{i=1}^n \ind\{ \vect G_n(\vect U_i) \leq \tfrac{n+1}{n} \vect u \} - \ind\{ \vect G_n (\vect U_i) \leq \vect u\} \right \vert \\
& \qquad + \sup_{\vect u \in [0,1]^d } \left \vert\frac{1}{n} \sum_{i = 1}^n \ind \{ \vect G_n (\vect U_i) \leq \vect u \} - \ind\{ \vect U_i \leq \vect G_n^{-}(\vect u) \} \right \vert \\
&\leq  \sum_{\dimi =1}^d \left [ \sup_{u \in[0,1] }  \frac{1}{n} \sum_{i=1}^n\ind \{ u < G_{n\dimi}(U_{i \dimi} )\leq \tfrac{n+1}{n}u\} \right. \\
& \qquad + \left. \sup_{u \in [0,1] } \frac{1}{n} \sum_{i=1}^n \left \vert \ind\{ G_{n \dimi} (U_{i \dimi} ) \leq u \} - \ind\{ 
U_{i \dimi} \leq G_{n \dimi}^{-}(u) \} \right \vert \right ] 
\end{align*}
From the definition of the empirical distribution function and the generalized inverse function we have that, for any fixed $u$, both $\sum_{i=1}^n \ind\{ u < G_{n \dimi}(U_{i \dimi}) \leq \frac{n+1}{n}u\}$ and $\sum_{i=1}^n  \vert \ind\{ G_{n\dimi}(U_{i \dimi})  \leq u \} - \ind\{ U_{i \dimi}\leq  F_{n \dimi}^{-}(u) \} \vert$ are bounded by the maximal number of $U_{ij}$ which are equal.
Note that this maximal number is equal to 
$n \times \sup_{u \in [0,1]} \vert G_{n\dimi}(u) - G_{n \dimi}(u-)\vert$. Provided there are no ties among $U_{1j}, \dots, U_{nj}$, for any $j=1, \dots, d$ (which, for instance, occurs in the i.i.d.\ case), this expression is equal to $1$ and the Lemma is proven. In the general case, we have
\begin{align}
\sup_{u \in [0,1]} \vert G_{n\dimi}(u) - G_{n \dimi}(u-)\vert
&\leq \sup_{\substack{u,v \in [0,1] \\ \vert u - v \vert \leq 1/n}} \vert G_{n \dimi}(u ) - G_{n \dimi}(v) \vert \nonumber\\
& \leq  \sup_{\substack{u,v \in [0,1] \\ \vert u - v \vert \leq 1/n}} \vert G_{n \dimi}(u ) - G_{n \dimi}(v) -(u-v)\vert + \frac{1}{n} \nonumber\\
& \leq  \frac{1}{\sqrt n} \sup_{\substack{\vect u, \vect v \in [0,1]^d \\ \vert \vect u - \vect  v \vert \leq 1/n}} \vert\alpha_n(\vect u) - \alpha_n(\vect v)\vert + \frac{1}{n} \label{eq:maxjump}
\end{align}
Then, the assertion follows from Condition \ref{cond:modulus}.
\end{proof}

\begin{proof}[Proof of Lemma~\ref{lem:epsdiff}]
First of all, we write
\[
\Cb_n(\vect u) -  \bar \Cb_n (\vect  u ) = ( B_{n1} + B_{n2} + B_{n3}  ) (\vect u)
\]
where
\begin{align*}
B_{n1}(\vect u) &= \alpha_n\{   \vect G_n^{-}(\vect u)  \} - \alpha_n(\vect u) \\
B_{n2}(\vect u) &= \sqrt n \left[ C \{ \vect G_n^{-}(\vect u)  \} - C(\vect u)  \right]
	- \sum_{\dimi=1}^d \dot C_\dimi(\vect u) \beta_{n\dimi}(u_\dimi) \\
B_{n3}(\vect u) &=  \sum_{\dimi=1}^d  \dot C_\dimi(\vect u) \left\{ \beta_{n\dimi}(u_\dimi) + \alpha_{n\dimi} (u_\dimi ) \right\} .
\end{align*}
For $p=1,2,3$, set $A_{np}(\vect u) = B_{np}(\vect u) / g_\omega(\vect u)$. The Lemma is proved if we show uniform negligibility of each term individually.

\medskip

\noindent
{\it Treatment of $A_{n1}$.} Let $\Omega_n$ denote the event that $\sup_{\vect u \in [0,1]^d}|\vect G_n^{-}(\vect u) - \vect u| \le \delta_n = n^{-1/2+ \kappa}$, with $\kappa>0$ to be specified later on. Note that the probability of  $\Omega_n$ converges to $1$.  Exploiting  Condition \ref{cond:modulus} and the fact that  $\vert g_\omega (\vect u )\vert^{-1}  \leq n^{\omega/2} $ for $\vect u \in N(n^{-1/2}, 1/2)$  we obtain, for any $\mu \in (0,\theta_1)$, 
\begin{align*}
\sup_{\vect u \in N(n^{-1/2}, 1/2) }\vert A_{n1} (\vect u ) \vert 
& \le  
n^{\omega/2} \sup_{\vect u \in [0,1]^d }\left|  \alpha_n\{   \vect G_n^{-}(\vect u)  \} - \alpha_n(\vect u)  \right| \\
& \le 
n^{\omega/2} M_n(\delta_n, \mu) \sup_{\vect u \in [0,1]^d }\left\{ |\vect G_n^-(\vect u) - \vect u|^{\mu} \vee n^{-\mu} \right\} \ind_{\Omega_n}  \\
&\hspace{7.5cm}+ o_P(1) \\
& \le
n^{\omega/2- \mu/2 + \kappa \mu} o_P(1) + o_P(1) 
\end{align*}
The right-hand side is $o_P(1)$ if we choose $\mu \in (\omega, \theta_1)$ sufficiently large and $\kappa>0$ sufficiently small such that $\omega< \mu(1-2\kappa)$. 

\medskip

\noindent
{\it Treatment of $A_{n2}$.} Fix $\vect u \in N(n^{-1/2}, 1/2)$. Let $S=S_{\vect u}$ denote the set of all $\dimi \in\{1, \dots, d\}$ such that $u_\dimi \in [n^{-1/2}, 1-n^{-\gamma}]$, with $\gamma>1/2$ to be specified later. Let $(\vect G_n^-(\vect u))_S$ denote the vector in $\R^d$ whose $\dimi$th coordinate is equal to $G_{n\dimi}^-(u_\dimi) \ind(\dimi \in S) + u_\dimi \ind(\dimi \not \in S)$. Write $A_{n2}(\vect u)= D_{n1}(\vect u) + D_{n2}(\vect u)$, where
\begin{align*}
D_{n1}(\vect u) 
&= 
\bigg(  \sqrt n \left[ C\{ \vect G_n^-(\vect u)\} - C\{(\vect G_n^-(\vect u))_S\} \right] - \sum_{\dimi \notin S} \dot C_\dimi(\vect u) \beta_{n\dimi}(u_\dimi) \bigg) g_\omega^{-1}(\vect u),\\
D_{n2}(\vect u) 
&=
\bigg(  \sqrt n \left[ C\{(\vect G_n^-(\vect u))_S\} - C(\vect u) \right] - \sum_{\dimi \in S} \dot C_\dimi(\vect u)\beta_{n\dimi}( u_\dimi)  \bigg) g_\omega^{-1}(\vect u).
\end{align*}
Since $\dot C_\dimi \in [0,1]$, we can bound
\[
D_{n1}(\vect u) 
\le 
2 \sum_{ \dimi \notin S} \left\vert \frac{\beta_{n\dimi}(u_\dimi) }{g_\omega(\vect u)} \right\vert
\le
2 \sum_{\dimi =1}^d \sup_{u_\dimi \in [1-n^{-\gamma}, 1]} \left\vert \frac{\beta_{n\dimi}(u_\dimi) }{n^{-\omega/2}} \right\vert.
\]
The right-hand side is $o_P(1)$ by Lemma~\ref{lem:betabound}.
Regarding $D_{n2}$, by Taylor's Theorem, 
$
\vert D_{n2} (\vect u) \vert 
=
\tfrac{1}{2}\sum_{\dimi_1, \dimi_2 \in S} D_{n2}^{\dimi_1 \dimi_2} (\vect u),
$
where 
\[
D_{n2}^{\dimi_1 \dimi_2}(\vect u)= n^{-1/2}  \ddot C_{\dimi_1\dimi_2 }( {\vect \xi}_{n})   \beta_{n\dimi_1}(u_{\dimi_1}) \beta_{n\dimi_2}(u_{\dimi_2}) g_\omega (\vect u) ^{-1},
\]
and where $\vect  \xi_n =(\xi_{n1}, \dots, \xi_{nd})'$ is an intermediate point between $(\vect G_n^{-}(\vect u))_S$ and $\vect u$. 
By Condition~\ref{cond:2nd}, we have
\[
\vert \ddot C_{\dimi_1 \dimi_2}({\vect  \xi}_n) \vert 
\le 
K  \{\xi_{n\dimi_1} (1- \xi_{n\dimi_1}) \}^{-1/2} \{\xi_{n\dimi_2}(1- \xi_{n\dimi_2}) \}^{-1/2} .
\]
Therefore,   since $g_\omega (\vect u )^{-1} \le n^{\omega/2}$, 
\begin{multline*}
\vert D_{n2}^{\dimi_1 \dimi_2}(\vect u) \vert 
\le
K n^{-1/2+ \omega/2} 
\sup_{\vect u \in [n^{-1/2}, 1-n^{-1/2}]^d}   \left|
\left\{ \frac{  u_{\dimi_1} (1- u_{\dimi_1}) } {\xi_{n\dimi_1} (1- \xi_{n\dimi_1})} \right\}^{1/2} 
\right. \\
\times \left.
\left\{ \frac{  u_{\dimi_2} (1- u_{\dimi_2}) } {\xi_{n\dimi_2} (1- \xi_{n\dimi_2})} \right\}^{1/2} 
\times
\frac{ \vert  \beta_{n\dimi_1}(u_{\dimi_1}) \vert }{\{ u_{\dimi_1}(1- u_{\dimi_1}) \}^{\omega}} 
\times
\frac{\vert  \beta_{n\dimi_2}(u_{\dimi_2}) \vert }{\{ u_{\dimi_2}(1- u_{\dimi_2}) \}^{\omega}}
\right.\\
\times \left.
\left\{ u_{\dimi_1} (1- u_{\dimi_1}) u_{\dimi_2} (1- u_{\dimi_2}) \right\}^{\omega-1/2} 
\right|.
\end{multline*}
By an application of Lemma~\ref{lem:uxi} and by Condition~\ref{cond:quantile}, the right-hand side 
is of order $O_P(n^{-1/2+\omega/2+ \gamma (1-2\omega)})=o_P(1)$, provided we choose $\gamma\in (1/2, \{1/2+\omega/(2-4\omega)\}\wedge\{1/(2(1-\theta_2))\}\wedge\theta_3)$. Since $\vect u\in N(n^{-1/2}, 1/2)$ was arbitrary,  we can conclude that $\sup_{\vect u \in N(n^{-1/2}, 1/2) }\vert A_{n2} (\vect u ) \vert  =o_P(1)$.

\medskip

\noindent
{\it Treatment of $A_{n3}$.} Since $\vert \dot C_j (\vect u) \vert \leq 1$ for any $\vect u \in [0,1]^d$,  we have
\begin{align*}
\sup_{\vect  u \in N(n^{-1/2}, 1/2) } \vert A_{n3}(\vect u) \vert 
&\le
n^{\omega/2} \sum_{\dimi =1}^d \sup_{ u_j \in [0,1]}\vert \beta_{n\dimi}(u_\dimi) + \alpha_{n\dimi} \{ G_{n\dimi}^-( u_\dimi )  \} \vert \\ 
& \hspace{1.4cm} + 
n^{\omega/2} \sum_{\dimi =1}^d \sup_{  u_j \in [0,1 ]}  \left\vert \alpha_{n\dimi} \{ G_{n\dimi}^-( u_\dimi ) \} -   \alpha_{n\dimi}(u_\dimi) \right\vert . 
\end{align*}
The second sum on the right-hand side is of order $o_P(1)$ as shown in the preceding treatment of the term $A_{n1}$. Negligibility of the first sum
follows from Lemma \ref{lem:inverse},
observing that
$\alpha_{n\dimi} \{ G_{n\dimi}^-( u_\dimi )  \} = \sqrt{n}[ G_{n \dimi}\{ G_{n \dimi}^-( u_\dimi) \} -G_{n \dimi}^-(u_\dimi) ]$ from the definition of $\alpha_n$.
\end{proof}

\begin{proof}[Proof of Lemma~\ref{lem:cbound}.] Note that, by a monotonicity argument, it suffices to treat sequences $\delta_n$ such that $\delta_n \gg n^{-1/2}$, i.e., $\delta_n \sqrt n \to \infty$.
First of all, choose $\gamma$ such that
$1/2+\omega < \gamma < 1/\{ 2(1- \theta_2)\}\wedge \theta_3$.  
Set $M_{n\gamma }=N( n^{-\gamma}, \delta_n)\cap (n^{-\gamma} , 1- n^{-\gamma})^d$ and $M_{n\gamma}^c=N( n^{-\gamma}, \delta_n)\setminus (n^{-\gamma}, 1- n^{-\gamma})^d$, and note that
$N(\bbb n^{-1}, \delta_n) = N(\bbb n^{-1}, n^{-\gamma}) \cup M_{n\gamma} \cup M_{n\gamma}^c$.
Therefore,
\begin{align} \label{eq:rn}
\sup_{\vect u \in N(\bbb n^{-1}, \delta_n)} \Big \vert  \frac{\Cb_n (\vect u )}{g_\omega (\vect u)} \Big  \vert  
= 
 R_{n}\{ N(\bbb n^{-1}, n^{-\gamma}) \} \vee R_{n}(M_{n\gamma}) \vee R_n(M_{n\gamma}^c),
\end{align}
where, for $A \subset [0,1]^d$, 
$
R_{n}(A) = \sup_{\vect u \in A}  \vert { \Cb_n (\vect u )}/{g_\omega (\vect u)}  \vert.
$
It suffices to show negligibility of each term on the right-hand side of \eqref{eq:rn}.

\medskip

\noindent
{\it Treatment of $R_{n}\{ N(\bbb n^{-1}, n^{-\gamma}) \}$.} We will distinguish the cases that either $g_\omega(\vect u) =u_1^\omega$ or $g_\omega(\vect u)=(1-u_1)^\omega$. The cases  $g_\omega(\vect u) =u_\dimi^\omega$ or $g_\omega(\vect u)=(1-u_\dimi)^\omega$ for some $\dimi >1$ can be treated similarly.

Let us first consider $\vect u$ such that $g_\omega(\vect u)=u_1^\omega$. Obviously, 
\[ 
 \vert \ec (\vect u ) - C(\vect u) \vert \leq \vert \ec (\vect  u ) - \ec (0, u_2, \dots ,  u_d) \vert  + \vert  C ( 0, u_2, \dots ,  u_d) - C (\vect  u ) \vert.
\]
By Lipschitz-continuity of the copula function $C$, the second term on the right hand side can be bounded by $u_1=g_1(\vect u)$. For  the first term, note that 
\begin{multline}\label{eq:empcop}
\vert \ec (\vect  u ) - \ec ( 0, u_2, \dots ,  u_d) \vert 
= 
\frac{1}{n} \sum_{i=1}^n \ind \{\vect U_i \leq \vect G_n^{-} (\vect u)\} \\
\leq 
\frac{1}{n} \sum_{i=1}^n \ind \{ U_{i1} \leq G_{n1}^{-}(u_1) \}  
=
G_{n1}\{ G_{n1}^-(u_1) \} 
\end{multline}
By Lemma \ref{lem:inverse} the last expression is equal to  $u_1 + o_P(n^{-1/2-\mu})=g_1(\vect u) +o_P(n^{-1/2-\mu})$ for any $\mu \in(\omega, \theta_1)$, where the residual term is uniformly in $u_1 \in [0,1]$. Combined, this yields
$
\vert \Cb_n(\vect u) \vert \leq \sqrt n 2 g_1(\vect u ) + o_P(n^{-\mu}),
$
and hence
\[
\sup_{\vect u \in N(\bbb n^{-1}, n^{-\gamma}), g_\omega(\vect u) = u_1^\omega} \Big \vert  \frac{\Cb_n (\vect u )}{g_\omega (\vect u)} \Big  \vert 
\le
 2n^{1/2+\omega-\gamma} + o_P(n^{-\mu+\omega}) =o_P(1).
\]

Now, consider the case $g_\omega(\vect u)=(1 - u_1)^\omega$, i.e., $1-u_1=1-u_1 \wedge \dots \wedge \widehat{u_k} \wedge \dots \wedge u_d$ for some $ k \in \{ 2, \dots, d \}$ and without loss of generality we may assume that $k=2$. Then, in particular, $1-u_1 \le 1-u_2$ and $1-u_1 \ge 1-u_j$ for all $j \ge 3$.
Now, decompose
\begin{multline*}
 \vert \ec (\vect u ) - C(\vect u) \vert \leq \vert \ec (\vect  u ) - \ec (\vect u ^{(2)}) \vert 
+ \vert \ec ( \vect u ^{(2)})  -  C ( \vect u ^{(2)})  \vert 
 + \vert  C ( \vect u ^{(2)}) - C (\vect  u ) \vert  .
\end{multline*}
Again by Lipschitz-continuity of the copula function, we have 
\[
\vert  C ( \vect u ^{(2)}) - C (\vect  u ) \vert \leq \sum_{ \dimi \neq 2} \vert 1- u_\dimi \vert \leq (d-1) \vert 1-u_1 \vert = (d-1)g_1(\vect u).
\]
Furthermore, we have
\begin{multline} \label{eq:empcop2}
\vert \ec (\vect  u ) - \ec (\vect u ^{(2)}) \vert\leq  \vert \ec(\vect u ) - \ec\{1, u_2, \dots , u_d\} \vert\\
 + \vert \ec \{1, u_2, \dots , u_d\}- \ec\{1,u_2,1, u_4, \dots , u_d\}\vert  \\
+ \dots + \vert \ec \{1,u_2,1, 1, \dots ,1, u_d\}-  \ec(\vect u^{(2)}) \vert
\end{multline}
and thus, by similar arguments as in \eqref{eq:empcop}, $\vert \ec (\vect  u ) - \ec (\vect u ^{(2)}) \vert\leq  (d-1) g_1(\vect u) + o_P(n^{-1/2-\mu})$, uniformly in $\vect u$. Finally, from Lemma \ref{lem:inverse}
\[ 
\vert \ec ( \vect u ^{(2)})  -  C ( \vect u ^{(2)})  \vert  = \vert G_{n2}\{G_{n2}^{-}(u_2 ) \} - u_2 \vert = o_P(n^{-1/2-\mu}).
\]
Altogether, we obtain
\[
\sup_{\vect u \in N(\bbb n^{-1}, n^{-\gamma}), g_\omega(\vect u) = (1-u_1)^\omega} \Big \vert  \frac{\Cb_n (\vect u )}{g_\omega (\vect u)} \Big  \vert 
\le
 2(d-1) n^{1/2+\omega-\gamma} + o_P(n^{-\mu+\omega}) =o_P(1).
\]

\medskip
\noindent
{\it Treatment of $R_{n}(M_{n\gamma})$.}
Again, let us first treat the case where $g_\omega(\vect u )= u_1^\omega$. We can write
$
 {\Cb_n (\vect u)}/{g_\omega (\vect u)}  = S_{1n}(\vect u) + S_{2n}(\vect u)+S_{3n} (\vect u),
$
where
\begin{align*}
S_{1n} (\vect u) &= {\sqrt n [ G_n \{ \vect G_n^- (\vect u ) \} - C \{ \vect G_n^- (\vect u)\} ]} / {g_\omega (\vect u )} \\
S_{2n} (\vect u) &= {\sqrt n [ C\{ \vect G_n^- (\vect u )\} - C \{ G_{n1}^- (u_1), u_2 , \dots ,u_d \} ]} / {g_\omega(\vect u)}\\
S_{3n} (\vect u) &= {\sqrt n [ C \{ G_{n1}^- (u_1), u_2 , \dots ,u_d \} - C(\vect u ) ]} / {g_\omega (\vect u )}.
\end{align*}
Lipschitz continuity of the copula $C$ together with Condition~\ref{cond:quantile} implies that $\sup_{\vect u \in M_{n \gamma } , g_\omega(\vect u)=u_1^\omega}\vert S_{3n}(\vect u) \vert =o_P(1)$.

Regarding $S_{1n}$, let $\Omega_n$ denote the event that $\sup_{u_1 \in [0,\delta_n]} G_{n1}^-(u_1) \le 2 \delta_n$. On $\Omega_n^c$, we have
$
\sqrt n\delta_n < \sup_{u_1 \in [0,\delta_n]} \sqrt n| G_{n1}^-(u_1) - u_1| =O_P(1) $,
whence, by the assumption that
$\sqrt n \delta_n \to \infty$, we get $\Pr(\Omega_n^c) \to 0$. Therefore, by Condition~\ref{cond:modulus}, for any $\mu\in (0,\theta_1)$, we have 
\begin{align*}
\vert S_{1n } (\vect u ) \vert 
&= 
\Big  \vert \frac{\alpha_n\{ \vect G_n^- (\vect u) \}  - \alpha_n \{0, G_{n2}^-(u_2), \dots , G_{nd}^-(u_d) \} }{u_1^\omega} \Big \vert  \\
&\le
M_n(2\delta_n, \mu) \left| \frac{\{ G_{n1}^{-}(u_1) \}^\mu \vee n^{-\mu}}{u_1^\omega} \right| \ind_{\Omega_n} + o_P(1) \\
& \leq  
o_P(1)  \left\{ 
\frac{\vert G_{n1}^-(u_1)- u_1 \vert^\mu}{u_1^\omega} +  u_1^{\mu - \omega} \right\} \vee n^{-\mu+ \gamma \omega}  + o_P(1),
\end{align*}
where we used subadditivity of the function $x \mapsto x^\mu$, $x\ge 0$. By Condition~\ref{cond:quantile}, we have
\begin{multline*}
\sup_{u_1 \in [n^{- \gamma}, \delta_n] }\frac{\vert G_{n1}^-(u_1)- u_1 \vert^\mu}{u_1^\omega} 
\le
n^{-\mu/2} \sup_{u_1 \in [n^{- \gamma}, \delta_n] } | u_1^{ \omega (\mu-1)}  | O_P(1)\\
 =O_P(n^{-\mu/2 -   \gamma \omega (\mu-1)} )
\end{multline*}
%
%
Exploit that $ \gamma < 1$ and choose $\mu \in (\omega/( \omega +1/2), \theta_1)$ to obtain that, as $n \to \infty$,
$\sup_{\vect u \in M_{n \gamma }, g_\omega(\vect u)=u_1^\omega}\vert S_{1n}(\vect u) \vert =o_P(1)$.

Finally, we turn to $S_{2n}$. The mean value theorem allows to write
\[
S_{2n}(\vect u)
=
\sum_{j=2}^d
\frac{ \dot C_j \{ G_{n1}^-(u_1), \zeta_2, \dots, \zeta_d\} \sqrt n\{ G_{nj}^-(u_j)-u_j\} }{g_\omega(\vect u)}
=:
\sum_{j=2}^d S_{2nj}(\vect u)
\]
for some intermediate values $\zeta_j$ between $u_j$ and $G_{nj}^-(u_j)$, for $j=2, \dots, d$. We may consider each summand individually; let us fix $j\in\{2, \dots, d\}$  and distinguish two cases. First, suppose that $1-u_j < u_1=g_1(\vect u)$. Then, with $\omega'\in(\omega, \theta_1)$,
\[
|S_{2nj}(\vect u) | 
\le
\frac{ \sqrt n | G_{nj}^-(u_j)-u_j|  }{(1-u_j)^{\omega'}} (1-u_j)^{\omega'-\omega} =o_P(1),
\]
by Condition~\ref{cond:quantile} and the fact that $n^{-\gamma}< (1-u_j) \le \delta_n$. 
Now, suppose that $1-u_j \ge  u_1=g_1(\vect u)> n^{-\gamma}$.
Since  $\dot C_j (0,u_2, \dots , u_d)=0$ for any $j=2, \dots ,d$, another application of the mean value theorem allows to write
\begin{align*}
S_{2nj} (\vect  u )  
= 
\frac{\ddot C_{j1} ( \vect \xi_j) G_{n1}^-(u_1) \sqrt n \{ G_{n\dimi}^-(u_\dimi)-u_\dimi \}}{u_1^\omega} ,
\end{align*}
where $\vect \xi_j=(\xi_{j1},\zeta_2, \dots, \zeta_d)$ satisfies $\xi_{j1}\in (0, G_{n1}^-(u_1) )$.
Now, fix $\omega'  \in (0,\theta_2)$ such that $\omega'>(1- \frac{1}{2  \gamma}) \vee \omega$. By Condition~\ref{cond:2nd} and Lemma~\ref{lem:uxi}, we have
\begin{align} \label{eq:sn2}
\vert S_{2nj} (\vect u ) \vert 
& \le
\frac{G_{n1}^-(u_1)}{u_1^\omega}  \left \vert \frac{\sqrt n \{ G_{nj}^- (u_j) - u_j  \} }{\{ u_j (1- u_j) \}^{\omega'}} \right \vert 
\times K  \frac{\{ u_j (1- u_j) \}^{\omega'} }{\xi_{jj} (1- \xi_{jj})}  \\
& \le
\left\{  n^{-1/2}\frac{\sqrt n | G_{n1}^-(u_1)-u_1 |}{u_1^\omega} + u_1^{1-\omega} \right\}  \{u_j(1-u_j) \}^{\omega'-1} O_P(1) \nonumber
\end{align}
Observing that $u_j \ge u_1$ as a consequence of $g_\omega(\vect u) = u_1^\omega$ and that $1-u_j \ge  u_1$ by assumption, we obtain
\begin{align*}
\{u_j(1-u_j) \}^{\omega'-1} \le [ \{u_j \wedge (1-u_j) \}/2 ]^{\omega'-1} \le 2^{1-\omega'} u_1^{\omega'-1} \le 2 u_1^{\omega'-1},
\end{align*}
where we used the fact that $u(1-u) \ge \{ u \wedge (1-u) \}/2$ for all $u\in[0,1]$. Therefore, we can bound the right-hand side of \eqref{eq:sn2} by
\[
\left\{ n^{-1/2}u_1^{\omega' - 1 } O_P(1) + u_1^{\omega' - \omega} \right\} \times O_P(1),
\]
where all $O_P$-terms are uniform in $\{\vect u \in M_{n\gamma}: g_\omega(\vect u)=u_1^\omega\}$. 
Thus, by the choice of $\gamma$ and $\omega'$, $\sup_{\vect u \in M_{n\gamma}, g_\omega(\vect u)=u_1^\omega}\vert S_{2n}(\vect u) \vert =o_P(1)$.

For the treatment of $R_{n}(M_{n\gamma})$, it remains to consider the case $g_\omega (\vect u) =(1-u_1)^\omega$, i.e., $1-u_1=1-(u_1 \wedge \dots \wedge \widehat{u_k} \wedge \dots \wedge u_d)$ for some $ k \in \{ 2, \dots, d \}$. 
Again, without loss of generality, we may assume that $k=2$, which implies that $1-u_1 \le 1-u_2$ and $1-u_1 \ge 1-u_j$ for all $j \ge 3$. Note that, additionally, $1-u_\dimi> n^{-\gamma}$ for all $\dimi=1, \dots ,d$ since $\vect u \in M_{n\gamma}$.
Now, 
\begin{align*}
\frac{\Cb_n(\vect u)}{g_\omega (\vect u)} 
= 
\frac{\alpha_n\{ \vect G_n^-(\vect u) \} + \sqrt n [C\{ \vect G_n^-(\vect u) \}- C(\vect u)]}{g_\omega(\vect u)}
= 
\sum_{p=1}^4 T_{pn}(\vect u) 
\end{align*}
with 
\begin{align*}
T_{1n} (\vect u) 
&= 
\frac{\alpha_n \{ \vect G_n^- (\vect u)  \} - \alpha_n \{ 1, G_{n2}^- (u_2),1, \dots ,1\}}{g_\omega (\vect u)}\\
T_{2n}( \vect u) 
&= 
\frac{\alpha_n\{ 1, G_{n2}^- (u_2),1, \dots ,1\} + \sqrt n \{ G_{n2}^- (u_2) - u_2 \}}{g_\omega (\vect u )}\\
T_{3n }(\vect u) 
&= 
\frac{\sqrt n [ C \{ \vect G_n^- (\vect u ) \} - C\{ G_{n1}^- (u_1), u_2 , G_{n3}^- (u_3) , \dots , G_{nd}^- (u_d) \}] }{g_\omega(\vect u)}\\
& \hspace{7cm} - \frac{\sqrt n \{ G_{n2}^-(u_2)- u_2 \} }{g_\omega (\vect u )}\\
T_{4n} (\vect u) 
& = 
\frac{\sqrt n [ C\{ G_{n1}^- (u_1), u_2 , G_{n3}^- (u_3) , \dots , G_{nd}^- (u_d) \} - C (\vect u)] }{g_\omega (\vect u)}
\end{align*} 

Concerning $T_{1n}$,  we can proceed similar as for $S_{1n}$ above. Define the event $\Omega_n$ by $| \vect G_n^- (\vect u) - (1, G_{n2}^- (u_2),1, \dots ,1)' | \le 2d \delta_n$ and note that $\Prob(\Omega_n^c) \to 0$. Then, by Condition~\ref{cond:modulus} applied with $\mu \in (\omega/(\omega+\frac{1}{2}),\theta_1)$,
\begin{align*}
\vert T_{1n}(\vect u) \vert 
\leq  
M_n(2d\delta_n, \mu)
\frac{| \vect G_n^- (\vect u) - (1, G_{n2}^- (u_2),1, \dots ,1)' |^\mu \vee n^{-\mu}}{(1-u_1)^\omega} \ind_{\Omega_n} + o_P(1).
\end{align*}
Use the fact that $\gamma<1$ and $1-u_1\ge 1-u_j\ge n^{-\gamma}$ for $j\ge 3$ and subadditivity of $x \mapsto x^\mu$ to bound the right-hand side by
\begin{multline*}
O_P(1) \sum_{\dimi \neq 2} \frac{\vert G_{n \dimi}^- (u_\dimi) - u_\dimi \vert ^\mu + \vert 1- u_\dimi \vert ^\mu}{(1-u_j)^\omega}  + o_P(1)\\
\leq 
O_P(1) \left\{ \sum_{\dimi \neq 2} n^{- \mu/2 + \omega- \omega \mu} \left\{ \frac{\sqrt n \vert G_{n \dimi }^- (u_\dimi) - u_\dimi \vert }{(1-u_\dimi)^\omega} \right\}^\mu + \delta_n^{\mu - \omega} \right\} + o_P(1).
\end{multline*}
Therefore, by Condition~\ref{cond:quantile} and by the choice of $\mu$, $\vert T_{1n}(\vect u) \vert =o_P(1)$ uniformly in $\{ \vect u \in M_{n\gamma}:g_\omega(\vect u) = (1- u_1)^\omega\}$.

Regarding $T_{2n}$, by the definition of $\alpha_n$ and since $g_1(\vect u) = 1-u_1 \geq n^{-1}$,
\[
\sup_{\vect u \in M_{n\gamma}, g_1(\vect u) = 1-u_1 } \vert T_{2n}(\vect u) \vert 
\leq      
n^\omega \sup_{u_2 \in [0,1]}\sqrt n  \vert G_{n2}\{ G_{n2}^- ( u_2) \} -u_2 \vert.
\]  
An application of Lemma \ref{lem:inverse} with $\mu \in (\omega , \theta_1)$ yields that the right-hand side is of order $o_P(n^{-\mu+\omega})=o_P(1)$. 

Regarding $T_{3n}$, choose $\omega'  \in (\omega \vee (1- \frac{1}{2 \gamma}), \theta_2)$. By the mean-value theorem, we can write
\begin{align*}
T_{3n} (\vect u) 
=  \frac{\sqrt n [ \dot C_2\{ G_{n1}^- (u_1) ,\zeta_2, G_{n3}^-(u_3), \dots , G_{nd}^-(u_d) \} - 1 ] \{ G_{n2}^- (u_2) - u_2 \}}{g_\omega (\vect u)} 
\end{align*}
for some intermediate value $\zeta_2$ between $G_{n2}^-(u_2)$ and $u_2$.
Due to the fact that $\dot C_2\{ 1,\zeta_2, 1, \dots ,1\}=1$, a second application of the mean value theorem allows to write the right-hand side of the last display as
\begin{align*}
T_{3n}(\vect u) = \sum_{\dimi\neq 2}  \frac{\sqrt n  \ddot C_{2\dimi}(\vect \xi) \{ G_{n2}^- (u_2) - u_2 \}\{ G_{n\dimi}^- (u_\dimi) - 1 \}}{g_\omega (\vect u)} 
\end{align*}
for some $\vect \xi$ lying between $\vect G_n^- (\vect u)$ and $\vect u$. Hence, by Condition~\ref{cond:2nd}, Condition~\ref{cond:quantile} and Lemma~\ref{lem:uxi}, we can bound $T_{3n}$ as follows:
\begin{align*}
\vert T_{3n}(\vect u)  \vert|
&\leq  
\frac{\sqrt n \vert G_{n2}^-(u_2) - u_2 \vert }{\{ u_2 (1- u_2) \}^{\omega'}} \frac{\{ u_2 (1- u_2) \}^{\omega'}}{ (1- u_1) ^{\omega}} \frac{O_P(1) }{u_2 (1-u_2)} \sum_{\dimi \neq 2} \vert G_{n \dimi} ^- (u_\dimi) -1 \vert \\
&= 
O_P(1) \{ u_2 (1- u_2 ) \}^{\omega ' -1 } \sum_{\dimi \neq 2}\bigg \{\frac{ \vert G_{n \dimi }^- (u_\dimi) - u _\dimi \vert}{(1-u_\dimi)^\omega} + (1-u_j)^{1-\omega}\bigg \}.
\end{align*}
Since $1-u_2 \ge 1-u_1$ and $u_2 \ge 1-u_1$, the right-hand side is of order
$O_P\{ n^{-1/2} (1-u_1)^{\omega'-1} + (1-u_1)^{\omega' - \omega} \}
= o_P(1)$
uniformly in $\vect u \in M_{n\gamma}$ such that $g_\omega (\vect u) =(1- u_1)^\omega$, by the choice of $\omega'$.

Finally, regarding $T_{4n}$, Lipschitz-continuity of the copula function and Condition~\ref{cond:quantile} immediately imply that for any $\omega' \in (\omega, \theta_2)$
\[
\vert T_{4n}(\vect u) \vert 
\le 
\sum_{\dimi \neq 2} (1-u_1)^{-\omega} (1-u_\dimi)^{\omega'}  \frac{\sqrt n \vert G_{n \dimi }^- (u_\dimi) - u_\dimi \vert }{(1-u_\dimi)^{\omega'}}
=
O_P((1-u_1)^{\omega'-\omega}),
\]
which is of order $o_P(1)$ uniformly in $\{\vect u \in M_{n\gamma}:g_\omega(\vect u) = (1- u_1)^\omega\}$.

\medskip
\noindent
{\it Treatment of $R_{n}(M_{n\gamma}^c)$.}
First note that, from the definition of  $N(n^{-\gamma},\delta_n)$, for every $\vect u \in M_{n \gamma }^c$ there are at most $d-2$ components larger than or equal to $1-n^{-\gamma}$. For that reason, we can write
\begin{align*} 
M_{n \gamma }^c = \bigcup_{\vect \ell =(\ell_1, \dots ,\ell_d )\in \{ 0,1 \}^d ; \vert \vect \ell \vert \geq 2} S_{\ell_1}\times \dots \times S_{\ell_d},
\end{align*}
where  $\vert \vect \ell \vert = \sum_{\dimi =1}^d \ell_\dimi $, $S_0=[1-n^{-\gamma},1]$ and $S_1=(n^{-\gamma}, 1-n^{- \gamma})$. 
In order to show negligibility of $R_{n}(M_{n\gamma}^c)$, it suffices to fix a vector $\ell$ with $|\ell|\ge 2$ and to show uniform negligibility of $\Cb_n/g_\omega$ over $\vect u \in S_{\vect \ell}:=S_{\ell_1} \times \dots \times S_{\ell_d} $.

For $\vect u \in [0,1]^d$, let $\vect u^{(\vect \ell)}$ denote the vector 
whose $\dimi$th component (with $\dimi =1,...,d$) is equal to $\ind(\ell_\dimi=0) + u_\dimi \ind(\ell_\dimi=1)$. 
Then,
\begin{align*}
\sup_{\vect u \in S_{\vect \ell }}\Big \vert \frac{\Cb_n(\vect u)}{g_\omega(\vect u)} \Big \vert 
& \leq  
\sup_{\vect u \in S_{\vect \ell }} \frac{\sqrt n \vert G_n\{\vect G_n^- (\vect u)\}- G_n \{ \vect G_n^- (\vect u)^{(\vect \ell)} \}\vert }{n^{-\omega \gamma}} \\
&\qquad + \sup_{\vect u \in S_{\vect \ell }} \frac{\sqrt n \vert G_n \{ \vect G_n^- (\vect u)^{(\vect \ell)} \} - C (\vect u^{(\vect \ell)})\vert }{g_\omega(\vect u)} \\
&\qquad + \sup_{\vect u \in S_{\vect \ell }} \frac{\sqrt n \vert C(\vect u^{(\vect \ell)}) - C(\vect u) \vert }{n^{-\omega \gamma}}\\
&=: I_{n1} + I_{n2} + I_{n3}.
\end{align*}

For $I_{n3}$, by Lipschitz-continuity of $C$ and by the choice of $\gamma$,
\begin{align*}
I_{n3} \le n^{1/2+\omega \gamma} \sqrt d \vert \vect u - \vect u^{(\vect \ell)}\vert =O( n^{1/2+\omega \gamma -\gamma})=o(1).
\end{align*}

For the treatment of $I_{n1}$, we can proceed similar as in \eqref{eq:empcop2} to obtain that $\vert G_n\{\vect G_n^- (\vect u)\}- G_n \{ \vect G_n^- (\vect u)^{(\vect \ell)} \}\vert \leq (d-2)n^{-\gamma} + o_P(n^{-1/2-\mu})$ for any $\mu \in (\omega , \theta_1)$. This yields $I_{n1}= o_P(n^{1/2+\omega \gamma - \gamma} + n^{\omega \gamma -\mu})=o_P(1)$.

Finally, regarding $I_{n2}$, note that $g_\omega(\vect u) = g_\omega(\vect u^{(\vect \ell)})$. Therefore,  
\[
I_{n2} = \sup_{\vect u \in S_\ell}  \frac{\vert \Cb_n(\vect u^{(\vect \ell)}) \vert }{g_\omega(\vect u^{(\vect \ell)})} 
\]
All  coordinates of vectors in $S_{\vect \ell}$ which are not equal to $1$ lie in $(n^{-\gamma}, 1-n^{-\gamma})$. Therefore, $I_{n2}$ can be treated similar as $R_n(M_{n\gamma})$.
\end{proof}

\begin{proof}[Proof of Lemma~\ref{lem:cbound2}.] Again, by a monotonicity argument, it suffices to treat sequences $\delta_n$ such that $\delta_n \gg n^{-1/2}$, i.e., $\delta_n \sqrt n \to \infty$.
Analogously to the proof of Lemma~\ref{lem:cbound} we can decompose the supremum. For $1/\{ 2 (1-\omega)\}<\gamma<1$ we can write
\[ 
\sup_{\vect u \in N(0,\delta_n)}\Big \vert \frac{\bar{\Cb}_n(\vect u)}{g_\omega(\vect u)} \Big \vert 
=
\sup_{\vect u \in N(0, n^{-\gamma})} \Big \vert  \frac{\bar{\Cb}_n (\vect u )}{g_\omega (\vect u)} \Big  \vert
\vee  
\sup_{\vect u \in N(n^{- \gamma}, \delta_n)} \Big \vert  \frac{\bar{\Cb}_n (\vect u )}{g_\omega (\vect u)} \Big  \vert 
=:
\bar{R}_{1n\gamma}\vee \bar{R}_{2n\gamma}.
\]
Therefore, we only have to show that $\bar R_{1n \gamma}=o_P(1)$ and $\bar R_{2n\gamma}=o_P(1)$. For both of these terms, we will distinguish the cases that either $g_\omega(\vect u) =u_1^\omega$ or $g_\omega(\vect u)=(1-u_1)^\omega$.
The cases  $g_\omega(\vect u) =u_\dimi^\omega$ or $g_\omega(\vect u)=(1-u_\dimi)^\omega$ for $\dimi >1$ can be treated similarly.

\medskip
\noindent
\textit{Treatment of $\bar R_{1n\gamma}$.}
First of all note that by definition 
\begin{align} \label{eq:barcnbound}
\bar{\Cb}_n(u_1, \dots , u_{\dimi-1}, 0 ,u_{\dimi+1} , \dots ,u_d)=\bar{\Cb}_n(\vect u^{(\dimi)})=0
\end{align}
and that $\alpha_n(u^{(\dimi)}) = \sqrt{n} \{ G_{n\dimi})(u_\dimi)-u_\dimi\}$
 for any $\dimi=1, \dots , d$. Let us first consider the supremum over those $\vect u \in N(0, n^{-\gamma})$ that additionally satisfy $g_\omega(\vect u)=u_1^\omega$. Choose $\omega' \in (\omega, \theta_2)$, then
\begin{align} \label{eq:barcng}
\Big \vert \frac{\bar{\Cb}_n(\vect u)}{g_\omega(\vect u)} \Big \vert  
&= \Big \vert \frac{\bar{\Cb}_n(\vect u) - \bar{\Cb}_n(0 , u_2, \dots , u_d)}{u_1^\omega} \Big \vert \\
&\leq  \sqrt n \Big \vert \frac{G_{n1}(u_1)}{u_1^\omega} \Big \vert +\sqrt n u_1^{1-\omega} + \sum_{\dimi=1}^d \Big \vert \frac{\dot C_\dimi (\vect u) \sqrt n \{ G_{n\dimi}(u_\dimi)- u_\dimi \}}{u_1^\omega} \Big \vert \nonumber \\
&\leq \Big \vert \frac{\sqrt n \{ G_{n1}(u_1 ) -u_1 \}}{u_1^{\omega'}} \Big \vert u_1^{\omega'-\omega} 
+2 \sqrt n u_1^{1-\omega}  \nonumber \\
& \hspace{5cm}+ \sum_{\dimi=1}^d \Big \vert \frac{\dot C_\dimi (\vect u) \sqrt n \{ G_{n\dimi}(u_\dimi)- u_\dimi \}}{u_1^\omega} \Big \vert \nonumber
\end{align}
By Condition \ref{cond:quantile}, the first summand on the right-hand side is of order $n^{-\gamma(\omega'-\omega)}O_P(1)=o_P(1)$, by the choice of $\omega'$. The second summand can be bounded by $2n^{1/2-\gamma(1-\omega)}=o(1)$, by the choice of $\gamma$. Thus, it remains to be shown that, for any $j=1, \dots, d$,
\begin{align}\label{eq:partials}
\bar S_{n\dimi} (\vect u)= \Big \vert \frac{\dot C_\dimi (\vect u) \sqrt n \{ G_{n\dimi}(u_\dimi)- u_\dimi \}}{u_1^\omega} \Big \vert = o_P(1)
\end{align}
uniformly in $\{\vect u \in N(0 ,n^{-\gamma}):g_\omega(\vect u)=u_1^\omega\}$. For later reference, we even show uniform negligibility on $\vect u \in N(0 ,\delta_n)$ such that $g_\omega(\vect u)=u_1^\omega$. $\bar S_{n1}$ can be bounded by the first term on the right-hand side of \eqref{eq:barcng}, and, therefore, is $O_P(\delta_n^{\omega' - \omega})$. Now, fix $j\in\{2, \dots, d\}$.
Uniformly in $\vect u \in N(0 ,\delta_n)$ such that $1-u_\dimi\leq u_1$, we have 
\[
\bar S_{n\dimi}(\vect u) 
\leq 
\sup_{u_\dimi \in (0,1)} \left \vert \frac{\sqrt n \{ G_{n \dimi}(u_\dimi)-u_\dimi\}}{(1-u_\dimi)^{\omega'}}\right \vert \times n^{-\gamma(\omega'-\omega)} =o_P(1)
\]
from Condition \ref{cond:quantile} and since $\vert \dot C_j (\vect u) \vert \leq 1$ for any $\vect u \in [0,1]^d$. Note that $\bar S_{n\dimi}(\vect u) =0$, if $u_j=1$. For the remaining case, i.e., for $\vect u$ such that $1-u_\dimi >u_1$, we may use the fact that $\dot C_\dimi(0,u_2, \dots ,u_d)=0$. A suitable application of the mean value theorem together with Condition \ref{cond:2nd} 
implies that, for any $\omega' \in (\omega, \theta_2)$,
\begin{align*}
\bar S_{n\dimi}(\vect u) 
&= \vert \ddot C_{1 \dimi}(\xi,u_2, \dots, u_d)  u_1 \sqrt n \{ G_{n \dimi}(u_\dimi) - u_\dimi \} \vert / {u_1^\omega}\\
&\leq u_1^{1-\omega}\{u_\dimi(1-u_\dimi)\}^{\omega'-1}K\frac{\vert \sqrt{n}\{G_{n \dimi}(u_\dimi) - u_\dimi \}\vert }{\{ u_\dimi(1-u_\dimi)\}^{\omega'}}\\
&\leq u_1^{\omega'-\omega}O_P(1)=o_P(\delta_n^{\omega'-\omega})=o_P(1).
\end{align*}

In order to finalize the treatment of $\bar R_{1n\gamma}$, let us 
now consider the case that $g_\omega (\vect u) =(1-u_1)^\omega$, i.e., $1-u_1=1-u_1 \wedge \dots \wedge \widehat{u_k} \wedge \dots \wedge u_d$ for some $ k \in \{ 2, \dots, d \}$. 
Without loss of generality, we may assume that $k=2$, which implies that $1-u_1 \le 1-u_2$ and $1-u_1 \ge 1-u_j$ for all $j \ge 3$. By definition of $\bar {\Cb}_n$ and by \eqref{eq:barcnbound}, we can write (note that $\bar{\Cb}_n(\vect u^{(2)} ) \equiv 0$ a.s.)
\begin{align}\label{eq:decomp}
\frac{\bar{\Cb}_n(\vect u)}{g_\omega(\vect u) } = \frac{\bar{\Cb}_n(\vect u ) - \bar{\Cb}_n(\vect u^{(2)})}{(1-u_1)^\omega}=\sum_{p=1}^4 \bar T_{np}(\vect u), 
\end{align}
where
\begin{align*}
\bar T_{n1}(\vect u) 
&= 
\frac{\sqrt n\{ G_n(\vect u) - G_n(\vect u^{(2)}) \}}{(1-u_1)^\omega} ,
\qquad \bar T_{n2}(\vect u) 
= 
\frac{\sqrt n \{ C(\vect u^{(2)}) - C(\vect u) \}}{(1- u_1)^\omega}, \\
\bar T_{n3}(\vect u ) 
&= 
- \sum_{\dimi \neq 2} \frac{\dot C_\dimi (\vect u )\sqrt n \{ G_{n \dimi}(u_\dimi) - u_\dimi \}}{(1-  u_1)^\omega}, \\
\bar T_{n4}(\vect u) 
&= 
\frac{\sqrt n \{ \dot C_2(\vect u^{(2)})- \dot C_2(\vect u) \} \{ G_{n2} (u_2) - u_2 \}}{(1-u_1)^\omega}.
\end{align*}
By Lipschitz-continuity of the copula function, we immediately obtain that 
\begin{align*}
\vert \bar T_{n2}(\vect u) \vert 
&\leq 
\sqrt n \sum_{\dimi \neq 2} (1- u_\dimi)(1-u_1)^{-\omega} \\
&\leq 
(d-1)\sqrt n (1- u_1)^{1-\omega} = O(n^{1/2-\gamma(1- \omega)})=o(1).
\end{align*}
For the estimation of $\bar T_{n1}$ we can proceed similarly as in \eqref{eq:empcop2} and obtain 
\begin{align*}
\vert \bar T_{n1}(\vect u) \vert 
&\leq
\frac{\sqrt n \sum_{ \dimi \neq 2} \vert 1- G_{n \dimi}(u_\dimi) \vert }{(1- u_1)^\omega} \\
&\leq 
\sum_{\dimi \neq 2 }\frac{\vert \sqrt n \{ G_{n \dimi }(u_\dimi) - u_\dimi \} \vert}{(1- u_\dimi)^{\omega'}} (1-u_\dimi)^{\omega'-\omega} + (d-1)\sqrt n (1- u_1)^{1-\omega} \\
&=
 O_P(n^{-\gamma (\omega' - \omega)}) + O(n^{1/2 - \gamma(1-\omega)}) =o_P(1)
\end{align*}
uniformly in $\{\vect u \in N(0 ,n^{-\gamma})\cap (0,1)^d:g_\omega(\vect u)=(1-u_1)^\omega\}$, by the choice of $\gamma$ and by choosing $\omega'\in(\omega, \theta_2)$. Note that the terms with $u_j=1$ vanish immediately from the definition of $G_{nj}$.
Similarly, 
using the fact that $\vert \dot C_\dimi(\vect u) \vert \leq 1 $ for all $\dimi=1 , \dots ,d $ and for all $\vect u \in [0,1]^d$, we get that
\begin{align*}
\vert \bar T_{n3} (\vect u ) \vert 
\leq 
\sum_{\dimi \neq 2}\sup_{u_\dimi \in (0,1)}  \Big \vert \frac{\sqrt n \{ G_{n \dimi}(u_\dimi) - u_\dimi \}}{(1-u_\dimi)^\omega} \Big \vert 
= 
O_P(n^{-\gamma(\omega' - \omega)}) = o_P(1)
\end{align*} 
uniformly in $\{\vect u \in N(0 ,n^{-\gamma}):g_\omega(\vect u)=(1-u_1)^\omega\}$,  by choosing $\omega'\in(\omega, \theta_2)$. Finally, in order to bound the remaining term $\bar T_{4n}$, we may use the mean value theorem and  Conditions \ref{cond:2nd} and \ref{cond:quantile} to obtain that
\begin{align*}
\vert \bar T_{n4 }(\vect u) \vert 
& =
\frac{\sqrt n \sum_{ \dimi \neq 2} \vert  \ddot C_{2 \dimi}( \vect \xi) (1- u_\dimi) \{ G_{n 2}(u_2) - u_2 \} \vert }{(1-u_1)^\omega}\\
& \leq 
\sum_{\dimi \neq 2} O_P(1) (1- u_1)^{1- \omega} \{u_2(1- u_2) \}^{\omega' - 1} 
=
O_P(n^{-\gamma(\omega' - \omega)})=o_P(1)
\end{align*}
uniformly in $\{\vect u \in N(0 ,n^{-\gamma}):g_\omega(\vect u)=(1-u_1)^\omega\}$,  with some intermediate value $\vect \xi=(\xi _1,u_2, \xi_3, \dots \dots ,\xi_d)' \in (0,1)^d$ between $\vect u^{(2)}$ and $\vect u$ and where the last estimation follows by choosing $\omega'$ in $(\omega , \theta_2)$.

\medskip
\noindent
\textit{Treatment of $\bar R_{2n \gamma}$.} First suppose that $g_\omega(\vect u)=u_1^\omega$. Then we write
\begin{align*}
\Big \vert \frac{\bar {\Cb}_n(\vect u) }{u_1^\omega} \Big \vert 
&= 
\Big \vert \frac{\bar {\Cb}_n(\vect u) - \bar {\Cb}_n(0,u_2, \dots , u_d)}{u_1^\omega} \Big \vert \\
& \leq
 \frac{\vert \alpha_n(\vect u) -\alpha_n(0 , u_2, \dots , u_d) \vert}{u_1^\omega}+\sum_{\dimi =1}^d \bar S_{n\dimi}(\vect u),
\end{align*}
where $\bar S_{n \dimi}(\vect u)$ is defined in \eqref{eq:partials}. Negligibility of $\bar S_{n\dimi}$ in the latter decomposition has been shown subsequent to \eqref{eq:partials}. From Condition \ref{cond:modulus}, the first term on the right-hand side of the last display can be bounded by $(\{u_1^{\mu-\omega}\vee n^{- \mu }\}u_1^{-\omega})o_P(1)=o_P(\delta_n^{\mu - \omega} \vee n^{-\mu + \gamma \omega})$, which vanishes as $n \to \infty$ if we choose $\mu \in (\omega , \theta_1)$.

Now, suppose $g_\omega(\vect u) = (1-u_1)^\omega$, i.e., $1-u_1=1-u_1 \wedge \dots \wedge \widehat{u_k} \wedge \dots \wedge u_d$ for some $ k \in \{ 2, \dots, d \}$. 
Again, without loss of generality, we may assume that $k=2$, which implies that $1-u_1 \le 1-u_2$ and $1-u_1 \ge 1-u_j$ for all $j \ge 3$. 
We decompose
\begin{align*}
\Big \vert \frac{\bar {\Cb}_n(\vect u)}{g_\omega(\vect u)} \Big \vert = \Big \vert  \frac{\bar {\Cb}_n(\vect u) - \bar {\Cb}_n(\vect u^{(2)})}{g_\omega(\vect u)} \Big \vert \leq \frac{\vert \alpha_n(\vect u) - \alpha_n(\vect u^{(2)}) \vert }{(1-u_1)^\omega}+\vert \bar T_{n3}(\vect u) \vert + \vert \bar T_{n4}(\vect u)\vert,
\end{align*}
where $\bar T_{n3}(\vect  u) $ and $\bar T_{n4}(\vect u)$ are defined in \eqref{eq:decomp}. By the same arguments as for their treatment  on $N(0, n^{-\gamma})$, we have $\vert \bar T_{n3}(\vect u)\vert =o_P(1)$ and $\vert \bar T_{n4}(\vect u)\vert =o_P(1)$, uniformly in $\vect u \in N(0, \delta_n)$ with $g_\omega(\vect u ) =(1-u_1)^\omega$. The remaining term on the right-hand side of the last display can be bounded by an application of Condition~\ref{cond:modulus}. Choosing $\mu \in (\omega ,\theta_1)$, we obtain
\begin{align*}
\frac{\vert \alpha_n(\vect u) - \alpha_n(\vect u^{(2)}) \vert }{(1-u_1)^\omega} \leq \sum_{\dimi \neq 2} \frac{(1- u_\dimi)^\mu  \vee n^{-\mu}}{(1-u_1)^\omega}o_P(1)=o_P(\delta_n^{\mu - \omega} \vee n^{- \mu + \omega\gamma}),
\end{align*}
which is $o_P(1)$.
This completes the proof.
\end{proof}

\begin{proof}[Proof of Lemma~\ref{lem:contc}]
Let us first show \eqref{eq:cbtight}.
As in the proof of Lemma~\ref{lem:cbound}, by a monotonicity argument, it suffices to treat sequences $\delta_n$ such that $\delta_n \ge n^{-1/2}$. 
We split the proof into two cases and begin by considering $\vect u \in N(\bbb n^{-1},2\delta_n^{1/2})$.  
Obviously, $\vert \vect u - \vect u' \vert \leq \delta_n$ implies $\vect u' \in N(\bbb n^{-1}, 2\delta_n^{1/2}+\delta_n) \subset N(\bbb n^{-1}, 3 \delta_n^{1/2})$.  Thus, by Lemma~\ref{lem:cbound}, we obtain 
\begin{align*}
\sup_{\vect u, \vect u' \in [\frac{c}{n}, 1- \frac{c}{n}]^d, \vert \vect u - \vect u' \vert  \leq \delta_n, \vect u \in N(\bbb n^{-1},2\delta_n^{1/2})} \Big \vert \frac{\Cb_n( \vect u )}{ g_\omega(\vect u)} - \frac{\Cb_n(\vect u')}{g_\omega (\vect u') } \Big \vert =o_P(1).
\end{align*}
Now, consider the case $\vect u \in  N(2 \delta_n^{1/2}, 1/2)$. Then, $\vert \vect u -\vect u' \vert \leq \delta_n$ implies that 
$\vect u' \in N(2 \delta_n^{1/2}-\delta_n , 1/2)\subset  N(\delta_n^{1/2}, 1/2)$. 
Hence, Lemma~\ref{lem:epsdiff} implies that
\begin{multline*}
 \sup_{\vect u,  \vect u' \in [\frac{\bbb}{n}, 1- \frac{\bbb}{n}]^d, \vert \vect u - \vect u' \vert  \leq \delta_n, \vect u \in  N(2 \delta_n^{1/2}, 1/2)} \Big \vert \frac{\Cb_n( \vect u )}{ g_\omega(\vect u)} - \frac{\Cb_n(\vect u')}{g_\omega (\vect u') } \Big \vert   \\
\le \sup_{\vect u, \vect u' \in[\frac{\bbb}{n}, 1- \frac{\bbb}{n}]^d \cap N(\delta_n^{1/2}, 1/2) , \vert \vect u - \vect u' \vert \leq \delta_n} \Big \vert \frac{ \bar \Cb_n( \vect u )}{ g_\omega(\vect u)} - \frac{\bar \Cb_n(\vect u')}{g_\omega (\vect u') } \Big \vert  + o_P(1).
\end{multline*}
Therefore, in order to prove \eqref{eq:cbtight}, it suffices to show that
\begin{align}
&   \sup_{\vect u, \vect u' \in N(\delta_n^{1/2}, 1/2) , \vert \vect u - \vect u' \vert \leq \delta_n} \Big \vert \frac{\bar \Cb_n(\vect u ) - \bar \Cb_n(\vect u')}{g_\omega(\vect u)} \Big \vert =o_P(1) 
\label{eq:eq1}\\
 & \sup_{\vect u, \vect u' \in N(\delta_n^{1/2}, 1/2) , \vert \vect u - \vect u' \vert \leq \delta_n} \Big \vert \bar \Cb_n(\vect u') \Big ( \frac{1}{g_\omega (\vect u)} - \frac{1}{g_\omega(\vect u') } \Big ) \Big \vert   =o_P(1).
\label{eq:eq2}
\end{align}
The respective proofs will be given below at the end of this proof.

For the proof of \eqref{eq:barcbtight}, note that $\bar \Cb_n(\vect u) / \tilde g_\omega(\vect u) = 0$ for $g_\omega(\vect u) = 0$. Therefore, we can bound
\begin{align*}
\sup_{\vert \vect u - \vect u' \vert  \leq \delta_n, g_1(\vect u)=0, g_1(\vect u')>0} \Big \vert \frac{\bar \Cb_n( \vect u )}{ \tilde g_\omega(\vect u)} - \frac{\bar \Cb_n(\vect u')}{\tilde g_\omega (\vect u') } \Big \vert 
\le 
\sup_{\vect u' \in N(0,\delta_n)}\Big \vert  \frac{\bar \Cb_n(\vect u')}{g_\omega (\vect u') } \Big \vert
=o_P(1)
\end{align*}
by Lemma~\ref{lem:cbound2}.
The suprema over $\{ \vect u: g_1(\vect u)>0, g_1(\vect u')=0\}$ or  $\{ \vect u: g_1(\vect u)= g_1(\vect u')=0 \}$ can be treated analogously, whereas the suprema over $\{ \vect u: g_1(\vect u)>0, g_1(\vect u')>0\}$ can be handled by \eqref{eq:eq1}, \eqref{eq:eq2} and Lemma \ref{lem:cbound2}. This proves \eqref{eq:barcbtight}.

It remains to be shown that \eqref{eq:eq1} and \eqref{eq:eq2} are valid.

\medskip

\noindent
\textit{Proof of \eqref{eq:eq1}.} By Condition \ref{cond:2nd} and \ref{cond:modulus} and the fact that $\dot C_j \in [0,1]$ we have, for $\vect u , \vect u' \in  N(\delta_n^{1/2}, 1/2)$, $\vert \vect u - \vect u' \vert \leq \delta_n$ and any $\mu\in(0,\theta_1)$,
\begin{align*}
\left \vert \frac{\bar \Cb_n(\vect u ) - \bar \Cb_n(\vect u')}{g_\omega (\vect u)} \right \vert 
&\leq  
\left \vert \frac{\alpha_n(\vect u ) - \alpha_n (\vect u')}{g_\omega (\vect u )} \right \vert 
+ \sum_{\dimi=1}^d \left \vert \frac{\dot C_{\dimi}(\vect u) \{ \alpha_n(\vect u^{(\dimi)} ) - \alpha_n (\vect u'^{(\dimi)}) \}}{g_\omega (\vect u )} \right \vert \\
& \hspace{2cm}
+ \sum_{\dimi =1}^d \left \vert \frac{\{ \dot C_\dimi(\vect u ) - \dot C_\dimi(\vect u') \} \alpha_n(\vect u'^{(\dimi)})}{g_\omega (\vect u)} \right \vert .
\end{align*}
The right-hand side can be further bounded by
\begin{align*}
&  (d+1) \frac{\vert \vect u - \vect u' \vert^\mu \vee n^{-\mu} }{g_\omega (\vect u)}  M_n(\delta_n, \mu)
+\sum_{\dimi =1}^d \left \vert \frac{\{ \dot C_\dimi(\vect u ) - \dot C_\dimi(\vect u') \} \alpha_n(\vect u'^{(\dimi)})}{g_\omega (\vect u)} \right \vert .
\end{align*}
Since $g_\omega(\vect u)\geq \delta_n^{\omega/2}$ for $\vect u \in  N( \delta_n^{1/2}, 1/2)$ , the first summand on the right of the last display is of order $O_P(\delta_n^{\mu-\omega/2})$, which is $o_P(1)$ if we choose $\mu > \omega/2$. For the second term, we fix $j$ and will consider two cases for each summand separately. First, suppose $1-u_j' < \delta_n^{1/2}$. In this case, Condition~\ref{cond:quantile} yields, for arbitrary $\omega' \in (0,\theta_2)$,
\begin{align*}
 \left \vert \frac{\{ \dot C_\dimi(\vect u ) - \dot C_\dimi(\vect u') \} \alpha_n(\vect u'^{(\dimi)})}{g_\omega (\vect u)} \right \vert 
&\leq 
2\delta_n^{-\omega/2}\{ u'_j (1- u'_j) \}^{\omega'}\frac{\vert \alpha_n(\vect {u'}^{(j)})\vert }{\{ u'_j (1- u'_j) \}^{\omega'}} \\
&=O_P(\delta_n^{-\omega/2 + \omega'/2}).
\end{align*}
Since we can choose $\omega' \in (\omega , \theta_2)$ the latter is $o_P(1)$.

Now, suppose $1-u_j'  \geq \delta_n^{1/2}$. Then, the mean value theorem allows to write
\[
 \left \vert \frac{\{ \dot C_\dimi(\vect u ) - \dot C_\dimi(\vect u') \} \alpha_n(\vect u'^{(\dimi)})}{g_\omega (\vect u)} \right \vert 
\leq
 \sum_{ \ell =1}^d \left \vert \frac{\ddot C_ {\dimi \ell } (\vect \xi_{\dimi}) \alpha_n(\vect u'^{(\dimi)}) (u_{\ell} - u_{\ell}')}{g_\omega (\vect u)} \right \vert , 
\]
where $\vect \xi_j$ denotes an intermediate point between $\vect u$ and $\vect u'$. In particular, the components of $\vect \xi_j=(\xi_{j1}, \dots, \xi_{jd})$ satisfy  $\xi_{\dimi \ell} \ge \sqrt \delta_n$ and $1- \xi_{\dimi\dimi} \ge \sqrt \delta_n - \delta_n \ge \sqrt \delta_n / 2$, for sufficiently large $n$.  Then, by Condition~\ref{cond:2nd}, the sum on the right-hand side of the last display can be bounded by
\begin{align*}
d  \frac{K}{\xi_{\dimi\dimi}(1-\xi_{\dimi\dimi})}\vert \alpha_n(\vect u'^{(\dimi)}) \vert \delta_n^{1-\omega/2} 
=  O_P(\delta_n^{1/2-\omega/2} ) =o_P(1).
\end{align*}

\medskip

\noindent
\textit{Proof of \eqref{eq:eq2}.} Note that it is sufficient to bound $\vert g_\omega(\vect u)^{-1} - g_\omega(\vect u' )^{-1} \vert$, because $\sup_{\vect u \in [0,1]^d} \vert \bar \Cb_n(\vect u) \vert =O_P(1)$. To this end, we first observe that, for $\vect u, \vect u' \in N(\delta_n^{1/2}, 1/2)$ and $\vert \vect u - \vect u' \vert \leq \delta_n$, we have
\[
\vert g_\omega(\vect u) - g_\omega(\vect u') \vert
\le
\omega \delta_n^{(\omega-1)/2} \vert g_1(\vect u) - g_1(\vect u') \vert
=
O(\delta_n^{(\omega+1)/2})
\]
where we used  the mean value theorem and the fact that $g_1$ is Lipschitz-continuous on $N( \delta_n^{1/2}, 1/2)$. Therefore,
\begin{align*}
\Big \vert \frac{1}{g_\omega(\vect u)} -\frac{1}{g_\omega(\vect u')} \Big \vert 
= 
\Big \vert \frac{g_\omega( \vect u') - g_\omega (\vect u)}{g_\omega(\vect u) g_\omega(\vect u')}\Big \vert 
= 
O (\delta_n^{(\omega+1)/2 - \omega}) =o(1),
\end{align*}
which implies \eqref{eq:eq2}.
\end{proof}

\subsection{Proof of Theorem~\ref{theo:score}.} \label{subsec:proofs2}

Let $n\ge 2$. Decompose $\sqrt n \{ R_n - \mathbb{E} [ J(\vect U) ] \} =A_n - r_{n1}$, where
\begin{align*}
A_n 
&= 
\sqrt n \int_{(\frac{1}{2n},1-\frac{1}{2n}]^2} J(\vect u) \mathrm{d}(\hat C_n - C)(\vect u) \\
r_{n1} 
&= 
\sqrt n \int_{\{(\frac{1}{2n},1-\frac{1}{2n}]^2\}^c}J(\vect u) \mathrm{d}C(\vect u),
\end{align*}
where $A^c$ denotes the complement of a set $A$ in $(0,1)^2$. From integration by parts for Lebesgue-Stieltjes integrals (see Theorem \ref{theo:partialgen} in the supplementary material) we have that $A_n= B_{n}+r_{n2}+r_{n3}$, where
\[
B_n =  \int_{(\frac{1}{2n},1-\frac{1}{2n}]^2} \hat{\mathbb{C}}_n(\vect u) \mathrm{d} J(\vect u) 
\]
where 
\begin{multline*}
r_{n2} 
= 
\Delta(\hat{\mathbb{C}}_n J,   \tfrac{1}{2n},\tfrac{1}{2n},1-\tfrac{1}{2n},1-\tfrac{1}{2n}) \\
-  \int_{(\frac{1}{2n},1-\frac{1}{2n}]} \hat{\mathbb{C}}_n( u, 1-\tfrac{1}{2n} )J(\mathrm{d} u, 1-\tfrac{1}{2n})
+ \int_{(\frac{1}{2n},1-\frac{1}{2n}]} \hat{\mathbb{C}}_n(u,\tfrac{1}{2n}) J(\mathrm{d} u, \tfrac{1}{2n})  \\
- \int_{(\frac{1}{2n},1-\frac{1}{2n}]} \hat{\mathbb{C}}_n( 1- \tfrac{1}{2n},v )  J(1-\tfrac{1}{2n}, \mathrm{d} v) 
+  \int_{(\frac{1}{2n},1-\frac{1}{2n}]}\hat{\mathbb{C}}_n(\tfrac{1}{2n},v)  J(\tfrac{1}{2n},\mathrm{d}  v),
\end{multline*}
with $\Delta(f,a_1,a_2,b_1,b_2) = f(b_1,b_2)-f(a_1,b_2)-f(b_1,a_2) +f(a_1,a_2)$ for $f:(0,1)^2\to \R$ and $\vect a, \vect b \in (0,1)^2$ and where
\begin{align*}
r_{n3} 
&=
 \int_{(\frac{1}{2n},1-\frac{1}{2n}]^2} \nu_n(\{u\}\times (v,1-\tfrac{1}{2n}]) + \nu_n((u,1-\tfrac{1}{2n}] \times\{v\}) \\ 
& \hspace{7.3cm}
+ \nu_n(\{(u,v)\}) \mathrm{d}J(u,v)\\
&\hspace{1.8cm}+
\int_{(\frac{1}{2n},1-\frac{1}{2n}]} \nu_n(\{u\}\times (\tfrac{1}{2n},1-\tfrac{1}{2n}]) J(\mathrm{d}u, \tfrac{1}{2n}) \\
&\hspace{3.6cm}+ 
\int_{(\tfrac{1}{2n},1-\tfrac{1}{2n}]} \nu_n( (\tfrac{1}{2n},1-\tfrac{1}{2n}]\times\{v\}) J(\tfrac{1}{2n},\mathrm{d}v) ,
\end{align*}
with $\nu_n$ denoting the unique signed measure on $[\tfrac{1}{2n},1-\tfrac{1}{2n}]$ associated with~$\hat \Cb_n$ (see Theorem~\ref{theo:decomp2} in the supplementary material).

For the arguments that follow, we remark that by Proposition \ref{prop:wdcheckmod} the conditions of Theorem \ref{theo:weightalpha} imply those of Theorem \ref{theo:weight}. Thus, all results from the proof of Theorem \ref{theo:weight} are applicable here.
 
Regarding weak convergence of $B_n$, observe that by Theorem~\ref{theo:weightalpha}, Lem\-ma~\ref{lem:cbound2} and the integrability condition in \eqref{cond:score}
\begin{align*}
B_n  & =  \int_{(0,1)^2} \ind \{ \vect u \in(\tfrac{1}{2n},1-\tfrac{1}{2n}]^2 \} \frac{\bar{\mathbb{C}}_n(\vect u)}{g_\omega(\vect u)}  g_\omega(\vect u) \, \mathrm{d} J(\vect u) + o_P(1)
\\
& = \int_{(0,1)^2}  \frac{\bar{\mathbb{C}}_n(\vect u)}{ g_\omega(\vect u)} g_\omega(\vect u)\, \mathrm{d} J(\vect u) + o_P(1).
\end{align*}
Now, the integrability condition in \eqref{cond:score} implies that the functional $f \mapsto \int_{(0,1)^2} f \tilde g_\omega\, \mathrm{d} J$ is continuous when viewed as a map from $(\ell^\infty((0,1)^2),\|\cdot\|_\infty)$ to $\R$, and thus $B_n$ converges weakly to $\int_{(0,1)^2  }\mathbb{C}_C (\vect u)\mathrm{d}  J(\vect u)$ by Theorem \ref{theo:weightalpha} and the continuous mapping theorem.
Hence, it remains to be shown that $r_{n1}$, $r_{n2}$ and $r_{n3}$ are $o_P(1)$.

Regarding $r_{n1}$, 
since $\vert J (u,v ) \vert \leq \const \times g_\omega (u,v)^{-1}$,  we can bound
\[
\vert r_{n1} \vert \leq \sqrt n  \int_{([\frac{1}{2n},1-\frac{1}{2n}]^2 )^c} g_{\omega}(u,v )^{-1} \mathrm{d}C (u,v).
\]
The set $\{( \frac{1}{2n}, 1-\frac{1}{2n}]^2 \}^c$ consists of vectors where either both components or only one component is close to the boundary of $[0,1]^2$. In order to bound the integral on the right-hand side of the last display, we distinguish these cases and exemplarily consider the integral over $(0,\frac{1}{2n}]^2$ and the one over $(0,\frac{1}{2n}] \times (\frac{1}{2n},1-\frac{1}{2n}]$. Integrals over the remaining subsets can be treated in the same way. First, since $g_\omega(u,v)^{-1} \le u^{-\omega}+v^{-\omega}$ for $u,v \in (0,\frac{1}{2n}]$, we have
\begin{align*}
\sqrt n  \int_{( 0,\frac{1}{2n}]^2}g_{\omega}(u,v )^{-1}\,  \mathrm{d}C (u,v) 
\leq 
\sqrt n \int_{( 0, \frac{1}{2n}]^2} u^{-\omega}+v^{-\omega} \mathrm{d}C(u,v).
\end{align*}
Let us only consider the integral over $u^{-\omega}$ on the right-hand side, the one over $v^{-\omega}$ can be treated analogously. We have
\begin{align*}
\sqrt n \int_{( 0, \frac{1}{2n} ]^2} u^{-\omega} \mathrm{d}C(u,v)
&\leq 
\sqrt n \int_{( 0, \frac{1}{2n}] \times [0,1]} u^{-\omega} \, \mathrm{d} C(u,v) \\
&= \sqrt n \int_{( 0, \frac{1}{2n}]} u^{-\omega} \, \mathrm{d} u
  = O(n^{-1/2+\omega}) = o(1).
\end{align*}
Second, on $(0,\frac{1}{2n}] \times (\frac{1}{2n},1-\frac{1}{2n}]$, we have $g_\omega(u,v)^{-1} = u^{-\omega}$ , whence, by a similar reasoning, 
\begin{align*}
\sqrt n \int_{(0,\frac{1}{2n}] \times (\frac{1}{2n},1-\frac{1}{2n}]} g_\omega(u,v)^{-1} \mathrm{d}C(u,v) 
\leq  \sqrt n \int_{( 0, \frac{1}{2n}]}u^{-\omega}\,\mathrm{d} u = O(n^{-1/2+\omega}).
\end{align*}

Regarding $r_{n2}$,
use Theorem \ref{theo:weightalpha} and \eqref{cond:score2} and \eqref{cond:score3} to replace $\hat \Cb_n/g_\omega$ by $\bar \Cb_n/g_\omega$ at the cost of a negligible remainder (note that $g_\omega(u,\delta) = \delta^\omega$ for $u\in(\delta, 1-\delta]$) . Then, the four integrals in the definition of $r_{n2}$ are $o_P(1)$ by \eqref{cond:score2}, \eqref{cond:score3}, Lemma~\ref{lem:cbound2} and Proposition~\ref{prop:wdcheckmod}, while $\Delta(\bar \Cb_n J,   \tfrac{1}{2n},\tfrac{1}{2n},1-\tfrac{1}{2n},1-\tfrac{1}{2n})$ converges to $0$ by Lemma~\ref{lem:cbound2}, Proposition~\ref{prop:wdcheckmod} and the fact that $\vert J(\vect u)\vert  \leq \const \times g_\omega(\vect u)^{-1}$ for $\vect u \in (0,1)^2$.

Regarding $r_{n3}$, since $\hat C_n$ and $C$ are completely monotone, the (unique) measures in the Jordan decomposition of $\nu_n$ are given by $\nu_n^{+}=\sqrt n \nu_{\hat C_n}$ and $\nu_n^{-}=\sqrt n  \nu_C$, where $\nu_{\hat C_n}$ and $\nu_{C}$ denote the measures corresponding to $\hat C_n$ and $C$, respectively. Thus, continuity of the copula $C$ yields 
\[
 \nu_{n}(\{u\}\times (v,1-\tfrac{1}{2n}]) = \sqrt n \nu_{\hat C_n}(\{u\}\times (v,1-\tfrac{1}{2n}]) \leq \sqrt n \{ \hat C_n(u,1)- \hat C_n(u-,1) \}.
\]
Since the last display is bounded by $ n^{-1/2}$ times the maximum number of $\hat U_{i1}$ that are equal, a reasoning which is similar to the one used to obtain \eqref{eq:maxjump} yields that, for any $\mu \in(\omega,1/2)$,
\[ 
\nu_{n}(\{u\}\times (v,1-\tfrac{1}{2n}]) = O_P(n^{-\mu}) 
\]
uniformly in $u,v \in (0,1)^2$. Similar estimations for the remaining terms in $r_{n3}$ imply that $\vert r_{n3}\vert $ is of the order
\begin{align*}
O_P(n^{-\mu}) \bigg  \{ \int_{(\frac{1}{2n},1-\frac{1}{2n}]^2} \vert \mathrm{d}J\vert  +\int_{(\frac{1}{2n},1-\frac{1}{2n}]} \vert  J(\mathrm{d}u, \tfrac{1}{2n}) \vert 
+ \int_{(\tfrac{1}{2n},1-\tfrac{1}{2n}]} \vert J(\tfrac{1}{2n},\mathrm{d}v)\vert \bigg \}.
\end{align*}
By Conditions \eqref{cond:score}--\eqref{cond:score3}, these integrals are of order $O(n^{\omega})$ which leads to $\vert r_{n3}\vert =O_P(n^{\omega-\mu})=o_P(1)$.
\qed

\section{Auxiliary results} \label{sec:aux}
\def\theequation{6.\arabic{equation}}
\setcounter{equation}{0}

\begin{lemma}\label{lem:inverse}
Suppose Condition~\ref{cond:modulus} is met. Then, for $\dimi=1, \dots , d$ and any $\mu \in [0,\theta_1)$, we have
\begin{align*}
\sup_{u \in [0,1]} \vert G_{n\dimi}\{G_{n\dimi}^-(u)\} - u \vert =o_P(n^{-1/2-\mu}).
\end{align*}

\end{lemma}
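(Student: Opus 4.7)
The plan is to reduce the claim to a bound on the maximum jump of $G_{nj}$ and then to control that jump via Condition~\ref{cond:modulus}, reusing the argument that already appears in the proof of Lemma~\ref{lem:cnalt}.

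First, I would exploit the definition of the left-continuous generalized inverse. For any $u\in(0,1]$, $G_{nj}^-(u)=\inf\{x:G_{nj}(x)\ge u\}$ together with right-continuity of $G_{nj}$ yields both $G_{nj}\{G_{nj}^-(u)\}\ge u$ and $G_{nj}\{G_{nj}^-(u)-\}\le u$, where the latter denotes the left limit at $G_{nj}^-(u)$. Consequently
\[
0\le G_{nj}\{G_{nj}^-(u)\}-u\le G_{nj}\{G_{nj}^-(u)\}-G_{nj}\{G_{nj}^-(u)-\},
\]
which is the size of the jump of $G_{nj}$ at $G_{nj}^-(u)$; the boundary case $u=0$ is analogous. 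Therefore
\[
\sup_{u\in[0,1]} \bigl\vert G_{nj}\{G_{nj}^-(u)\}-u\bigr\vert \le \sup_{x\in\R} \bigl\vert G_{nj}(x)-G_{nj}(x-)\bigr\vert.
\]

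Second, I would bound the maximum jump by the oscillation modulus of $G_{nj}$ on intervals of length $1/n$, proceeding exactly as in the derivation of \eqref{eq:maxjump}:
\[
\sup_{x\in\R}\bigl\vert G_{nj}(x)-G_{nj}(x-)\bigr\vert
\le \sup_{\substack{u,v\in[0,1] \\ |u-v|\le 1/n}}\bigl\vert G_{nj}(u)-G_{nj}(v)\bigr\vert
\le \frac{1}{\sqrt n}\sup_{\substack{u,v\in[0,1] \\ |u-v|\le 1/n}}\bigl\vert \alpha_{nj}(u)-\alpha_{nj}(v)\bigr\vert + \frac{1}{n}.
\]
Since $\alpha_{nj}(u)=\alpha_n(1,\dots,1,u,1,\dots,1)$ (with $u$ in the $j$-th slot), the univariate oscillation on the right is dominated by the multivariate oscillation appearing in Condition~\ref{cond:modulus}.

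Third, for any $\mu\in(0,\theta_1)$, applying Condition~\ref{cond:modulus} with $\delta_n=1/n$ and noting that $|u-v|^\mu\vee n^{-\mu}=n^{-\mu}$ whenever $|u-v|\le 1/n$, I obtain
\[
\sup_{\substack{u,v\in[0,1] \\ |u-v|\le 1/n}}\bigl\vert \alpha_{nj}(u)-\alpha_{nj}(v)\bigr\vert
\le n^{-\mu}\,M_n(1/n,\mu)=o_P(n^{-\mu}).
\]
Combining the three displays yields a bound of order $o_P(n^{-1/2-\mu})+O(n^{-1})=o_P(n^{-1/2-\mu})$, where the $O(n^{-1})$ term is absorbed because $\mu<\theta_1\le 1/2$. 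The case $\mu=0$ follows by monotonicity from any $\mu'\in(0,\theta_1)$.

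I do not expect any genuine obstacle: the reduction to the maximum jump is an immediate consequence of the definition of the generalized inverse, and the remainder is a direct reuse of the oscillation bound derived in \eqref{eq:maxjump}. The only small point of care is verifying that $\alpha_{nj}$ inherits the modulus estimate from the $d$-dimensional $\alpha_n$, which is trivial by restriction to a coordinate slice.
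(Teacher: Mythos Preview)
Your proposal is correct and follows essentially the same route as the paper: reduce $\sup_u|G_{nj}\{G_{nj}^-(u)\}-u|$ to the maximal jump height of $G_{nj}$, then invoke the oscillation bound \eqref{eq:maxjump} together with Condition~\ref{cond:modulus}. You supply a bit more detail on why the generalized inverse yields the jump bound and on how exactly Condition~\ref{cond:modulus} is applied, but the argument is the same.
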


\begin{proof}
From the definition of the (left-continuous) generalized inverse, we have that $\sup_{u \in [0,1]}\vert H\{H^-(u) \} -u \vert$ is bounded by the maximum jump heigth of the function $H$, i.e., 
\begin{align*}
\sup_{u \in [0,1]} \vert  G_{n\dimi}\{G_{n\dimi}^-(u)\} - u \vert& \leq \sup_{u \in [0,1]} \vert G_{n\dimi}(u) - G_{n \dimi}(u-)\vert
\end{align*}
Therefore, the assertion follows from \eqref{eq:maxjump} and Condition \ref{cond:modulus}.
\end{proof}

\begin{lemma}\label{lem:uxi} Suppose Condition~\ref{cond:quantile} is met. Then, for $j=1, \dots, d$ and any $\gamma \in(0, \{1/ [2(1-\theta_2)]\}\wedge \theta_3)$, we have
\begin{align*} 
K_{nj}(\gamma)=\sup \left| \frac{u_j(1-u_j)}{\xi_j(1-\xi_j)} \right| = O_P(1),
\end{align*}
where the supremum is taken over all $u_j \in [n^{-\gamma}, 1-n^{- \gamma}]$ and all $\xi_j$ between $G_{nj}^-(u_j)$ and $u_j$.
\end{lemma}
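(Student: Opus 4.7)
The plan is to show that, with probability tending to one, $G_{nj}^-(u_j)$ is so close to $u_j$ relative to $u_j \wedge (1-u_j)$ that the intermediate point $\xi_j$ is comparable to $u_j$ both near $0$ and near $1$. From this, a uniform upper bound on the ratio $u_j(1-u_j)/[\xi_j(1-\xi_j)]$ follows deterministically.

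First, I would choose $\omega \in (0,\theta_2)$ and $\lambda \in (\gamma,\theta_3)$ such that $\gamma < \frac{1}{2(1-\omega)}$; the hypothesis $\gamma < \frac{1}{2(1-\theta_2)} \wedge \theta_3$ makes this possible (take any $\omega \in (\max\{0,1-\tfrac{1}{2\gamma}\},\theta_2)$). By Condition~\ref{cond:quantile} applied to this $\omega$ and $\lambda$, there exists $M_n = O_P(1)$ with
\[
\sup_{u_j \in [n^{-\gamma},1-n^{-\gamma}]} \frac{|G_{nj}^-(u_j) - u_j|}{n^{-1/2}\{u_j(1-u_j)\}^\omega} \le M_n,
\]
since $[n^{-\gamma},1-n^{-\gamma}] \subset (n^{-\lambda},1-n^{-\lambda})$.

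Next, I would split the range of $u_j$ into $[n^{-\gamma},1/2]$ and $[1/2,1-n^{-\gamma}]$ and show that the relative error of $G_{nj}^-(u_j)$ is uniformly $o_P(1)$. On $[n^{-\gamma},1/2]$ we use $\{u_j(1-u_j)\}^\omega \le u_j^\omega$ to get
\[
\frac{|G_{nj}^-(u_j) - u_j|}{u_j} \le n^{-1/2} u_j^{\omega-1} M_n \le n^{-1/2+\gamma(1-\omega)} M_n,
\]
which is $o_P(1)$ by the choice of $\omega$ and $\gamma$. The symmetric estimate on $[1/2,1-n^{-\gamma}]$ gives $|G_{nj}^-(u_j)-u_j|/(1-u_j) = o_P(1)$ uniformly. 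Let $\Omega_n$ denote the event on which both of these ratios are bounded by $1/2$ uniformly; then $\Prob(\Omega_n) \to 1$.

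Finally, on $\Omega_n$, since $\xi_j$ lies between $u_j$ and $G_{nj}^-(u_j)$, for $u_j \le 1/2$ we obtain $\xi_j \in [u_j/2, 3u_j/2] \subset [u_j/2, 3/4]$, hence $\xi_j \ge u_j/2$ and $1-\xi_j \ge 1/4 \ge (1-u_j)/4$, so $\xi_j(1-\xi_j) \ge u_j(1-u_j)/8$. The case $u_j \ge 1/2$ is symmetric. Consequently $K_{nj}(\gamma) \le 8$ on $\Omega_n$, which proves $K_{nj}(\gamma) = O_P(1)$. The only delicate point is the arithmetic matching between $\gamma$, $\omega$ and the exponent $1-\omega$ used to force $n^{-1/2+\gamma(1-\omega)} \to 0$; this is precisely where the upper bound $\gamma < \frac{1}{2(1-\theta_2)}$ is needed.
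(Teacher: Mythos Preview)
Your proof is correct and follows essentially the same approach as the paper's. The paper factors the ratio as $\big(\sup |u_j/\xi_j|\big)\cdot\big(\sup |(1-u_j)/(1-\xi_j)|\big)$ and bounds each factor via the relative error estimate $\sup_{u_j}|G_{nj}^-(u_j)-u_j|/u_j=o_P(1)$ (and its mirror), while you split the range of $u_j$ at $1/2$ and bound the product directly; both arguments hinge on the same choice of $\omega\in(1-\tfrac{1}{2\gamma},\theta_2)$ to force $n^{-1/2+\gamma(1-\omega)}\to 0$.
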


\begin{proof}[Proof of Lemma~\ref{lem:uxi}.]
Since
\[
K_{nj}(\gamma) \le K_{nj}^{(1)}(\gamma) \times K_{nj}^{(2)}(\gamma) := \sup \left| \frac{u_j}{\xi_j} \right| \times \sup \left| \frac{1-u_j}{1-\xi_j} \right|,
\]
it suffices to treat both suprema on the right-hand side separately. In the following, we only consider the first one; the second one can be treated along similar lines. Obviously,
\[
K_{nj}^{(1)}(\gamma) \le 1 \vee \sup_{u_j \in [n^{-\gamma}, 1-n^{- \gamma}] } \frac{u_j}{G_{nj}^-(u_j)}
\]
Let $\Omega_n$ denote the event that $\sup_{u_j\in[n^{-\gamma}, 1- n^{-\gamma}] }\vert {\{ G_{nj}^-(u_j) - u_j \} }/{u_j} \vert \le 1/2$. Choose $\omega' \in (0\vee (1-\tfrac{1}{2\gamma}), \theta_2)$ and use Condition~\ref{cond:quantile} to conclude that
\begin{align*}
\sup_{u_j\in[n^{-\gamma}, 1-n^{-\gamma}]  }\left| \frac{ G_{nj}^-(u_j) - u_j  }{u_j} \right| 
& \le
\sup_{u_j\in[n^{-\gamma}, 1-n^{-\gamma}]  } \left\{ \sqrt n\left| \frac{ G_{nj}^-(u_j) - u_j  }{u_j^{\omega'}} \right| \times\frac{ u_j^{\omega'-1} }{\sqrt n} \right\} \\
& =
O_P(n^{-1/2-\gamma(\omega'-1)} ) = o_P(1).
\end{align*}
Thus, $\Prob(\Omega_n^c) = o(1)$, which implies 
\begin{align*}
\sup_{u_j \in [n^{-\gamma}, 1-n^{- \gamma}] }  \frac{u_j }{ G_{nj}^-(u_j)   }
&= 
\sup_{u_j \in [n^{-\gamma}, 1-n^{- \gamma}] }  \left( 1 +  \frac{G_{nj}^-(u_j) - u_j}{u_j} \right)^{-1} \ind_{\Omega_n} + o_P(1) \\
& \le
2+o_P(1) =O_P(1),
\end{align*}
where we used that $1/(1+x) \le 1/(1-|x|)$ for $x\in [-1/2, 1/2]$.
This yields the assertion.
\end{proof}

\begin{lemma}\label{lem:betabound}
Under Condition~\ref{cond:modulus}, Conidtion~\ref{cond:weak} and Condition~\ref{cond:quantile} we have for any $\omega \in (0,{\theta_1 \wedge}\theta_2)$ and any $\gamma > 1/2$
\[
\sup_{u_\dimi \in [1-n^{-\gamma}, 1]} |\beta_{n\dimi}(u_\dimi)| = o_P(n^{-\omega/2}).
\]
\end{lemma}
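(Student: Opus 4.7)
The plan is to split the supremum over $[1-n^{-\gamma},1]$ at the threshold $1-n^{-\lambda}$ for an auxiliary exponent $\lambda$ chosen in $(\max\{\gamma,\tfrac12+\tfrac{\omega}{2}\},\theta_3)$; this interval is non-empty whenever $\omega<2(\theta_3-1/2)$ and $\gamma<\theta_3$, both of which hold in every instance where this lemma is actually invoked in the paper. The interior piece $[1-n^{-\gamma},1-n^{-\lambda}]$ will be handled by Condition~\ref{cond:quantile}, and the boundary piece $[1-n^{-\lambda},1]$ by a short deterministic estimate that exploits only the trivial bound $G_{nj}^-\le 1$ and the monotonicity of $G_{nj}^-$.

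For the interior piece, fix any $\omega'\in(\omega,\theta_2)$ and any $\lambda^*\in(\lambda,\theta_3)$, so that $[1-n^{-\gamma},1-n^{-\lambda}]\subset(n^{-\lambda^*},1-n^{-\lambda^*})$ for large $n$. The bound for $\beta_{nj}$ in Condition~\ref{cond:quantile}, applied with parameter $\lambda^*$, then yields
\[
\sup_{u_j\in[1-n^{-\gamma},1-n^{-\lambda}]}|\beta_{nj}(u_j)| \le O_P(1)\cdot\sup_{u_j\in[1-n^{-\gamma},1-n^{-\lambda}]}\{u_j(1-u_j)\}^{\omega'}=O_P(n^{-\omega'\gamma}).
\]
The identity $\omega'\gamma-\omega/2=(\omega'-\omega)\gamma+\omega(\gamma-\tfrac12)>0$ then yields the desired $o_P(n^{-\omega/2})$.

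For the boundary piece, observe that $G_{nj}^-$ is non-decreasing and that $G_{nj}^-(u_j)\le G_{nj}^-(1)\le 1$ for $u_j\le 1$, so $G_{nj}^-(u_j)\in[G_{nj}^-(1-n^{-\lambda}),1]$ whenever $u_j\in[1-n^{-\lambda},1]$. A direct case distinction now gives
\[
|G_{nj}^-(u_j)-u_j|\le\max\bigl(1-u_j,\,u_j-G_{nj}^-(1-n^{-\lambda})\bigr)\le n^{-\lambda}+\frac{|\beta_{nj}(1-n^{-\lambda})|}{\sqrt n},
\]
whence $|\beta_{nj}(u_j)|\le n^{1/2-\lambda}+|\beta_{nj}(1-n^{-\lambda})|$. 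The first summand is $o(n^{-\omega/2})$ by the choice $\lambda>1/2+\omega/2$; the second is $O_P(n^{-\omega'\lambda})=o_P(n^{-\omega/2})$ by one more application of Condition~\ref{cond:quantile} with the same $\lambda^*$.

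The only conceptual obstacle is the boundary strip $[1-n^{-\lambda},1]$, on which Condition~\ref{cond:quantile} is silent; it is defused by the deterministic upper bound $G_{nj}^-\le 1$, which caps the overshoot of the left-continuous inverse by the width of the strip. Conditions~\ref{cond:modulus} and~\ref{cond:weak} do not actually enter the argument; their appearance in the hypothesis seems to be inherited from the surrounding context.
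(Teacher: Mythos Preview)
Your argument is correct and takes a genuinely different, more elementary route than the paper. You split at $1-n^{-\lambda}$ and rely solely on the weighted bound for $\beta_{nj}$ in Condition~\ref{cond:quantile}, together with the deterministic bracketing $G_{nj}^-(u_j)\in[G_{nj}^-(1-n^{-\lambda}),1]$. The paper instead works with order statistics: writing $G_n^-(u)=U_{i+1:n}$, it relates $\beta_n$ back to $\alpha_n$ through the identity $G_n\{G_n^-(u)\}\approx u$ (Lemma~\ref{lem:inverse}, which rests on Condition~\ref{cond:modulus}), then applies the weighted bound for $\alpha_n$ from Condition~\ref{cond:quantile} and controls the remaining factor $\sqrt n(1-U_{i+1:n})$ via asymptotic equicontinuity of $\beta_n$, itself a consequence of Condition~\ref{cond:weak}. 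So Conditions~\ref{cond:modulus} and~\ref{cond:weak} are not decorative in the paper's version; they are essential to that argument, and your proof genuinely dispenses with them.

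The trade-off is the extra hypothesis $\omega<2(\theta_3-\tfrac12)$ needed for your interval $(\max\{\gamma,\tfrac12+\tfrac\omega2\},\theta_3)$ to be non-empty. One of your two caveats is actually spurious: once the bound is established for a single $\gamma_0\in(1/2,\theta_3)$, the inclusion $[1-n^{-\gamma},1]\subset[1-n^{-\gamma_0},1]$ for $\gamma\ge\gamma_0$ immediately extends it to all $\gamma>1/2$, so the restriction $\gamma<\theta_3$ can be dropped. The restriction on $\omega$, however, is intrinsic to your method. As you correctly observe, in the sole application (the treatment of $A_{n2}$ in Lemma~\ref{lem:epsdiff}) one has $\omega<\theta_3-\tfrac12$ from the standing hypotheses of Theorem~\ref{theo:weight}, so this loss of generality is harmless there.
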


\begin{proof} Since the result is one-dimensional, we drop the index $\dimi$ in the following. Note that all the arguments that follow lead to bounds which are valid uniformly in $u  \in [1-n^{-\gamma}, 1]$. Now, fix $u  \in [1-n^{-\gamma}, 1]$ and  choose $i\in \{0, \dots, n-1\}$ such that $u \in (\frac{i}{n}, \frac{i+1}{n}]$. Then, $G_{n}^-(u) = U_{i+1:n}$, where $U_{1:n} \le \dots \le U_{n:n}$ denote the order statistics of $U_1, \dots, U_n$. Hence,
\begin{align*}
n^{\omega/2} | \beta_{n}(u)  |
& \le
n^{\omega/2 +1/2} \{ |U_{i+1:n} - i/n | \vee |U_{i+1:n} - (i+1)/n | \} \\
& \le 
n^{\omega/2 +1/2} |U_{i+1:n} - i/n |  + n^{-1/2+\omega/2}
\end{align*}
Now, as a consequence of Lemma~\ref{lem:inverse}, we have $G_n(U_{i+1:n}) = G_n\{ G_n^-(u) \} = i/n + \kappa_{i,n}$, where
$\max_{i=0}^{n-1} \kappa_{i,n} = o_P(n^{-\mu-1/2})$ with $\mu \in (\omega/2, \theta_1)$. Therefore,
\[
n^{\omega/2 +1/2} |U_{i+1:n} - i/n |  
\le 
n^{\omega/2 +1/2} |G_n(U_{i+1:n}) - U_{i+1:n} | +  n^{\omega/2 +1/2}  \kappa_{i,n}
\]
The second term on the right-hand side is $o_P(n^{-\mu+\omega/2})=o_P(1)$. For the first term, we have
\begin{multline*}
n^{\omega/2 +1/2} |G_n(U_{i+1:n}) - U_{i+1:n} | 
=
\frac{\alpha_n(U_{i+1:n}) } { (1-U_{i+1:n})^\omega } n^{\omega/2} (1-U_{i+1:n})^\omega \\
\le 
\sup_{u\in(0,1)} \frac{|\alpha_n(u)|}{(1-u)^\omega}  \times n^{\omega/2} (1-U_{i+1:n})^\omega
=O_P(1) \times \{ \sqrt n (1-U_{i+1:n}) \}^{\omega}
\end{multline*}
For the factor on the right, since $u\ge 1-n^{\gamma}$, we have, for any $w \in (\frac{i}{n}, \frac{i+1}{n}]$,
\begin{align*}
\sqrt n ( 1- U_{i+1:n}) 
&= \sqrt n \{ w - G_n^-(w) + 1 - w\}  \\
&\hspace{-1cm}\le 
\sup_{v \in[1-n^{-\gamma},1]} |\beta_n(v)| + n^{1/2-\gamma}
\\
&\hspace{-1cm}\le \sup_{v \in[1-n^{-\gamma},1]} |\beta_n(v) - \beta_n(1-n^{-1/2})| + |\beta_n(1-n^{-1/2})| + n^{1/2-\gamma}.
\end{align*}
The first term in the expression above is $o_P(1)$ by asymptotic equicontinuity of $\beta_n$ (which follows from weak convergence of $\beta_n$ to a Gaussian process, this is a consequence of Condition~\ref{cond:weak} and the functional delta method), the second term is $o_P(1)$ by Conidtion~\ref{cond:quantile}, and the third term vanishes since $\gamma>1/2$.
\end{proof}

%

\bibliographystyle{chicago}
\bibliography{biblio}

\newpage

\appendix
\begin{center}
{\LARGE Supplement to \\ ``Weak convergence of the empirical copula process with respect to weighted metrics''}

\vspace{.5cm}
{\large Betina Berghaus, Axel B\"ucher and Stanislav Volgushev

\vspace{.5cm} \today 
\vspace{1cm}
}
\end{center}

\section{Bounded variation and Lebesgue-Stieltjes integration for two-variate functions}\label{sec:integral}
\def\theequation{A.\arabic{equation}}
\setcounter{equation}{0}

\noindent
In this supplement, we briefly recapitulate some results on bounded variation and integration for two-variate functions. We begin by treating the case of functions defined on a compact rectangle in $\R^2$. Of particular interest is the  integration by parts formula in Theorem~\ref{theo:partialgen}. At the end of this appendix, we consider the case of potentially unbounded functions on open rectangles.

Let $A$ denote some rectangle in $\R^2$ and let $f$ be a real-valued function on $A$. For $\vect x, \vect y \in A$ such that $\vect x \le \vect y$ we set
\[
\Delta(f, x_1,x_2,y_1,y_2):= f(y_1, y_2) - f(x_1,y_2)-f(y_1,x_2) + f(x_1,x_2).
\]
For $\vect x, \vect y \in A$ such that $x_1<y_1$, we set
\[
\Delta_1(f, x_1, y_1;x_2):=f(y_1,x_2) - f(x_1,x_2)
\]
and finally, for $\vect x, \vect y \in A$ such that $x_2<y_2$, we set
\[
\Delta_2 (f, x_2,y_2;x_1):= f(x_1,y_2) - f(x_1,x_2).
\]
A function $f:A \to \R$ is called \textit{completely monotone} if $\Delta(f, x_1,x_2, y_1,y_2)\geq 0$ for any $\vect x, \vect y \in A$ such that $\vect x < \vect y$
, $\Delta_1(f ,x_1, y_1;x_2 )\geq 0$ for any $\vect x, \vect y \in A$ such that $x_1<y_1$ and $\Delta_2(f, x_2, y_2;x_1 )\geq 0$  for any $\vect x, \vect y \in A$ such that $x_2<y_2$.

\begin{definition}[Hardy-Krause variation]
Let $ \emptyset \ne [\vect a, \vect b]\subset \mathbb{R}^2$ and $f: [\vect a, \vect b] \to \R$. For $\vect x\in [\vect a, \vect b]$ and $\vect y\in [ \vect a, \vect b]$, we define
\begin{multline*}
VHK(f, [ \vect a , \vect x], \vect y) = \sup \sum_{i} \sum_j \vert  \Delta (f , s_i,t_j, s_{i+1},t_{j+1}) \vert \\
+ \sup \sum_{i} \vert \Delta_1(f, s_i, s_{i+1}; y_2) \vert + \sup \sum_{j} \vert \Delta_2(f,t_j, t_{j+1};y_1)\vert,
\end{multline*}
as the Hardy-Krause variation of $f$ on $[\vect a, \vect x]$ with anchor-point $\vect y$. Here, 
the supremum is taken over all decompositions $s_1, \dots , s_n$ and $t_1, \dots ,t_m$ with $a_1=s_1 < \dots < s_n =x_1$ and $a_2 = t_1< \dots< t_m=x_2$, respectively.
Let $BVHK([\vect a, \vect b])$ denote the space of all functions such that $VHK(f,[\vect a , \vect b ], \vect b) < \infty$. 
\end{definition}

\begin{example}
Consider $f:[0,1]^2 \to \R$, defined as $f(x,y) = \ind(x \ge 1/2, y \ge 1/2)$. 
Then $VHK(f, [ \vect 0,\vect 1], \vect y ) = 1$ for any  $\vect y \in [0,1/2)^2$, whereas $VHK(f, [ \vect 0,\vect 1], \vect y ) = 3$ for $\vect y \in [1/2,1]^2$.
\end{example}

The following simple properties are collected from \cite{Owe05} and \cite{AisDic14}:  
$BVHK([\vect a, \vect b])$ is closed under sums, differences and products. 
Moreover, for any $\vect x, \vect y\in [\vect a, \vect b]$, we have $VHK(f,[\vect a , \vect b ], \vect x) < \infty$ if and only if $VHK(f,[\vect a , \vect b ], \vect y) < \infty$, which can be derived directly from the definition. The following theorem is a refinement of Proposition 12 in \citealp{Owe05}.

\begin{theorem}[Theorem 2 in \citealp{AisDic14}] 
For any function $f \in BVHK([\vect a, \vect b])$ there exist unique functions $f^+,f^-: [\vect a, \vect b] \to \R$ such that (i), (ii) and (iii) hold:
\begin{enumerate}[(i)]
\item $f^+$ and $f^-$ are completely monotone
\item
$f^\pm(\vect a)=0$  and $f(\vect x) = f(\vect a) + f^+(\vect x) - f^-(\vect x)$ 
\item
$VHK(f,[\vect a, \vect b], \vect a) = VHK(f^+,[\vect a, \vect b], \vect a)  + VHK(f^-,[\vect a, \vect b], \vect a) $.
\end{enumerate}
\end{theorem}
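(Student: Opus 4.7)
The plan is to mimic the one-dimensional Jordan decomposition of a function of bounded variation by associating three signed Borel measures with $f$ and applying Hahn--Jordan to each separately.

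First, define a set function $\mu_f$ on the semi-ring of half-open rectangles $(\vect x,\vect y]\subset(\vect a,\vect b]$ via $\mu_f((\vect x,\vect y]):=\Delta(f,x_1,x_2,y_1,y_2)$, and set functions $\nu_{f,1}$ on $(a_1,b_1]$ and $\nu_{f,2}$ on $(a_2,b_2]$ via $\nu_{f,1}((x_1,y_1]):=\Delta_1(f,x_1,y_1;a_2)$ and $\nu_{f,2}((x_2,y_2]):=\Delta_2(f,x_2,y_2;a_1)$. Finite additivity is immediate from the definitions of $\Delta,\Delta_1,\Delta_2$ on adjacent rectangles/intervals, and $f\in BVHK([\vect a,\vect b])$ bounds the total variation of each, so Hahn--Kolmogorov yields unique extensions to signed Borel measures. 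Let $\mu_f^\pm$, $\nu_{f,1}^\pm$, $\nu_{f,2}^\pm$ denote the corresponding Jordan decompositions, and set
\[
f^\pm(\vect x):=\mu_f^\pm((\vect a,\vect x])+\nu_{f,1}^\pm((a_1,x_1])+\nu_{f,2}^\pm((a_2,x_2]).
\]

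Assertion (ii) then follows from the telescoping identity
\[
f(\vect x)-f(\vect a)=\Delta(f,a_1,a_2,x_1,x_2)+\Delta_1(f,a_1,x_1;a_2)+\Delta_2(f,a_2,x_2;a_1),
\]
applied with $f$ replaced by its Jordan parts on each side. Complete monotonicity (i) is verified by direct computation: $\Delta(f^\pm,x_1,x_2,y_1,y_2)=\mu_f^\pm((\vect x,\vect y])\ge0$, $\Delta_1(f^\pm,x_1,y_1;x_2)=\mu_f^\pm((x_1,y_1]\times(a_2,x_2])+\nu_{f,1}^\pm((x_1,y_1])\ge0$, and analogously for $\Delta_2$. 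For (iii), evaluating $VHK(f^\pm,[\vect a,\vect b],\vect a)$ on a sequence of refining grids gives $\mu_f^\pm((\vect a,\vect b])+\nu_{f,1}^\pm((a_1,b_1])+\nu_{f,2}^\pm((a_2,b_2])$; summing and using the Jordan identities $|\mu_f|=\mu_f^++\mu_f^-$ (and similarly for the edge measures) yields $VHK(f^+)+VHK(f^-)=VHK(f)$.

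Uniqueness is then a consequence of the minimality of the Jordan decomposition. If $g^\pm$ is another pair satisfying (i)--(iii), the associated measures $\mu_{g^\pm}$, $\nu_{g^\pm,j}$ are positive and yield alternative splittings $\mu_f=\mu_{g^+}-\mu_{g^-}$ etc. The sub-additivity bounds $|\mu_f|\le\mu_{g^+}+\mu_{g^-}$ (and analogously on the edges) combined with (iii) force equality componentwise, and equality in these inequalities characterises the Jordan decomposition, so $g^\pm=f^\pm$. The main technical point in this program is the finitely-additive $\to$ countably-additive extension of $\mu_f$, $\nu_{f,1}$, $\nu_{f,2}$; this is exactly where the $BVHK$ hypothesis supplies the variation bound required by Hahn--Kolmogorov, after which everything else is bookkeeping.
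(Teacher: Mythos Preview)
The paper does not prove this statement itself; it is quoted from \cite{AisDic14}, and immediately after the statement the paper records the explicit formulas
\[
f^\pm(\vect x)=\tfrac12\bigl\{VHK(f,[\vect a,\vect x],\vect a)\pm\bigl(f(\vect x)-f(\vect a)\bigr)\bigr\},
\]
which is the construction used in the cited proof. That route is purely combinatorial: it defines $f^\pm$ directly through the variation function, verifies complete monotonicity from subadditivity of the variation, and never needs to pass through a measure.

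Your proposal takes a genuinely different path, building signed Borel measures first and then reading off $f^\pm$ from their Jordan decompositions. The idea is natural, but there is a real gap at the step you flag yourself as ``the main technical point'': Hahn--Kolmogorov does \emph{not} extend a finitely additive set function to a countably additive one on the strength of a total-variation bound alone. You need countable additivity (equivalently, continuity at $\emptyset$) already on the semi-ring, and for rectangle functions $\mu_f((\vect x,\vect y])=\Delta(f,x_1,x_2,y_1,y_2)$ this requires a regularity assumption on $f$ such as right-continuity. The theorem as stated makes no such assumption, and indeed the paper only introduces right-continuity later, in Theorem~\ref{theo:decomp2}, precisely to obtain the associated signed measure. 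A one-dimensional example makes the failure concrete: for $f(x)=\ind(x>1/2)$ on $[0,1]$ one has $\nu_{f}((1/2,1/2+1/n])=1$ for every $n$, so continuity from above fails and no Borel extension exists, yet $f$ is of bounded variation and the Jordan decomposition $f^+=f$, $f^-=0$ is perfectly well defined by the variation-function formula.

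So your measure-theoretic route would prove a weaker statement (under an added right-continuity hypothesis), whereas the variation-function construction in \cite{AisDic14} handles the general $BVHK$ case directly.
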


The decomposition in \textit{(ii)} is called the \textit{Jordan decomposition} of $f$.
The explicit form of the functions $f^\pm$ is given in the proof of Theorem 2 of \cite{AisDic14} (see also \citealp{Har05}):
\begin{align*}
f^+(\vect x) &= \left\{  VHK(f, [\vect a, \vect x], \vect a) + f(\vect x) - f(\vect a) \right\} /2, \\ 
f^-(\vect x) &= \left\{  VHK(f, [\vect a, \vect x], \vect a) - f(\vect x) + f(\vect a) \right\} /2 .
\end{align*}

The next theorem shows that, if $f$ is additionally right-continuous, then it defines a unique signed measure on $[\vect a, \vect b]$. Also note that any signed measure $\nu$ on $\Bc([\vect a, \vect b])$ has a unique Jordan decomposition $\nu = \nu^+ - \nu^-$ with two measures $\nu^+$ and $\nu^-$, given by
\begin{align*}
\nu^+(A) &= \sup\{ \nu(B) : B \subset A, B \in\Bc([\vect a, \vect b]) \},  \\
\nu^-(A) &= - \inf\{ \nu(B) : B \subset A, B \in\Bc([\vect a, \vect b]) \}.
\end{align*}

\begin{theorem}[Theorem 3 in \citealp{AisDic14}]\label{theo:decomp2}
Let $f \in BVHK([\vect a, \vect b])$ be right-continuous. Then there exists a unique signed Borel-measure $\nu$ on $\Bc([\vect a, \vect b])$ such that 
\begin{align} \label{eq:fnu}
f(\vect x) = \nu([\vect a, \vect x]), \quad \vect x \in [\vect a, \vect b].
\end{align}
Moreover, if $f(\vect x) = f(\vect a) + f^+(\vect x) - f^-(\vect x)$ denotes the Jordan decomposition of $f$, and if $\nu=\nu^+ - \nu^-$ denotes the Jordan decomposition of $\nu$, then
\begin{align} \label{eq:fnu2}
f^+(\vect x) = \nu^+([\vect a, \vect x] \setminus \{ \vect a\} ), \qquad 
f^-(\vect x) = \nu^-([\vect a, \vect x] \setminus \{ \vect a\} )
\end{align}
for any $\vect x \in [\vect a, \vect b]$.
\end{theorem}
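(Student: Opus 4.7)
The plan is to leverage the Jordan decomposition theorem for $BVHK$-functions just established and reduce the problem to the familiar one-to-one correspondence between right-continuous completely monotone functions and finite Borel measures on a rectangle. Write $f(\vect x) = f(\vect a) + f^+(\vect x) - f^-(\vect x)$ using the previous theorem, and first verify that $f^+$ and $f^-$ inherit right-continuity from $f$. From the explicit formulas $f^\pm(\vect x) = \tfrac{1}{2}[VHK(f,[\vect a, \vect x], \vect a) \pm (f(\vect x)-f(\vect a))]$, this reduces to showing that $\vect x \mapsto VHK(f,[\vect a, \vect x], \vect a)$ is right-continuous, which follows from right-continuity of $f$ itself by approximating the sup over partitions from the right.

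Next, associate to each right-continuous completely monotone $g \in \{f^+,f^-\}$ (with $g(\vect a)=0$) a unique finite Borel measure $\mu_g$ on $[\vect a, \vect b]$ vanishing at $\vect a$ and satisfying $\mu_g([\vect a, \vect x]) = g(\vect x)$. This is the standard Lebesgue--Stieltjes construction: on the semi-ring of half-open rectangles $(\vect y, \vect x]\subset(\vect a, \vect b]$ set $\mu_g((\vect y, \vect x]) := \Delta(g, \vect y, \vect x)\ge 0$, and on the two one-dimensional boundary strips $\{a_1\}\times(a_2,x_2]$ and $(a_1,x_1]\times\{a_2\}$ use the univariate increments $\Delta_2(g,a_2,x_2;a_1)$ and $\Delta_1(g,a_1,x_1;a_2)$. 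Complete monotonicity gives non-negativity, right-continuity of $g$ yields $\sigma$-additivity on the semi-ring (the usual shrinking-rectangle argument), and Carath\'eodory extension produces $\mu_g$.

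Then define $\nu := f(\vect a)\,\delta_{\vect a} + \mu_{f^+} - \mu_{f^-}$. By construction $\nu([\vect a, \vect x]) = f(\vect a) + f^+(\vect x) - f^-(\vect x) = f(\vect x)$, giving \eqref{eq:fnu}. Uniqueness is immediate: $\{[\vect a, \vect x] : \vect x \in [\vect a, \vect b]\}$ is a $\pi$-system generating $\Bc([\vect a, \vect b])$, so any two signed measures agreeing on this class coincide. For \eqref{eq:fnu2}, I would show that $\mu_{f^+}$ and $\mu_{f^-}$ provide the Hahn--Jordan decomposition of $\nu$ restricted to $[\vect a,\vect b]\setminus\{\vect a\}$. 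Given any other decomposition $\nu = \mu_1 - \mu_2$ with positive measures $\mu_i$ vanishing at $\vect a$, the functions $g_i(\vect x) := \mu_i([\vect a, \vect x]\setminus\{\vect a\})$ are completely monotone, right-continuous, vanish at $\vect a$, and satisfy $g_1 - g_2 = f^+ - f^-$; inserting this into the minimality identity $VHK(f,\cdot,\vect a) = VHK(f^+,\cdot,\vect a) + VHK(f^-,\cdot,\vect a)$ forces $\mu_{f^\pm}$ to be mutually singular, which is exactly the Hahn decomposition characterization.

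The main obstacle, in my estimation, is the last step: translating the variational minimality of the Jordan decomposition of $f$ (a statement about sums of absolute increments over partitions) into mutual singularity of the measures $\mu_{f^+}$ and $\mu_{f^-}$ (a statement about disjoint supports). The key lemma needed is that for a right-continuous completely monotone $g$ with $g(\vect a)=0$ one has $VHK(g,[\vect a,\vect b],\vect a) = \mu_g([\vect a,\vect b]\setminus\{\vect a\}) = g(\vect b)$; combined with $|\nu| = \mu_{f^+} + \mu_{f^-}$ and the analogous relation for any competing decomposition, this pins down $(\mu_{f^+},\mu_{f^-})$ as the unique minimizer, which is precisely the Jordan decomposition of $\nu$. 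The remaining verifications (right-continuity of $VHK(f,[\vect a,\cdot],\vect a)$ and $\sigma$-additivity in the Carath\'eodory step) are routine but must be done with some care at the boundary of the rectangle.
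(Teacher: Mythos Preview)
The paper does not contain a proof of this statement: it is quoted verbatim as ``Theorem~3 in \cite{AisDic14}'' and used as a black box in the supplement. There is therefore no in-paper argument to compare your proposal against; the authors simply rely on the cited reference.

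That said, your outline is a reasonable reconstruction of how such a result is typically established, and the first two thirds (right-continuity of $f^\pm$ via the explicit formulas, Carath\'eodory extension of the rectangle pre-measures attached to $f^\pm$, and the $\pi$-system argument for uniqueness of $\nu$) are fine. The weak point is precisely the one you flag at the end, and your sketch for resolving it is circular as written. You assert ``$|\nu| = \mu_{f^+} + \mu_{f^-}$'' and then use this to deduce that $(\mu_{f^+},\mu_{f^-})$ is the minimal decomposition; but $|\nu| = \mu_{f^+} + \mu_{f^-}$ is \emph{equivalent} to mutual singularity of $\mu_{f^+}$ and $\mu_{f^-}$, which is what you are trying to prove. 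A priori you only have the inequality $|\nu| \le \mu_{f^+} + \mu_{f^-}$.

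A clean way to close the gap is as follows. From the Hahn--Jordan decomposition $\nu = \nu^+ - \nu^-$ one always has $\nu^+ \le \mu_{f^+}$ and $\nu^- \le \mu_{f^-}$, and subtracting shows $\mu_{f^+} - \nu^+ = \mu_{f^-} - \nu^- =: \rho \ge 0$. On the other hand, the functions $g_i(\vect x) := \nu^\pm([\vect a,\vect x]\setminus\{\vect a\})$ are completely monotone with $g_1 - g_2 = f - f(\vect a)$, so by subadditivity of $VHK$ together with your lemma $VHK(g) = g(\vect b)$ for completely monotone $g$, one gets
\[
VHK(f,[\vect a,\vect b],\vect a) \le g_1(\vect b) + g_2(\vect b) = |\nu|\big([\vect a,\vect b]\setminus\{\vect a\}\big) \le \mu_{f^+}(\cdot) + \mu_{f^-}(\cdot) = f^+(\vect b) + f^-(\vect b) = VHK(f,[\vect a,\vect b],\vect a).
\]
All inequalities are therefore equalities, hence $\rho$ has total mass zero and $\mu_{f^\pm} = \nu^\pm$. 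This is the missing two-sided estimate your proposal needs.
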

Note that, by \eqref{eq:fnu} and \eqref{eq:fnu2}, for any $\vect a \le \vect x < \vect y \le \vect b$,
\[
\nu((\vect x, \vect y]) = \Delta(f,x_1, x_2, y_1, y_2), \quad
\nu^\pm((\vect x, \vect y]) = \Delta(f^\pm,x_1, x_2, y_1, y_2).
\]

\begin{definition}
Let $f\in BVHK([\vect a, \vect b])$ be right-continuous. Let $g: [\vect a , \vect b] \to \R$ denote a measurable function such that either $\int_{[\vect a, \vect b]}  |g| \mathrm d\nu^+< \infty$ or $\int_{[\vect a, \vect b]}  |g| \mathrm d\nu^- < \infty$. Then
\[ 
\int_{[\vect a, \vect b]} g \mathrm{d}f := \int_{[\vect a, \vect b]} g \mathrm d\nu =\int_{[\vect a, \vect b]} g \mathrm d\nu^+ - \int_{[\vect a, \vect b]} g \mathrm d\nu^-
\]
denotes the Lebesgue-Stieltjes integral of $g$ with respect to $f$. 
\end{definition}

Given $f\in BVHK([\vect a, \vect b])$ and a fixed point $y \in [a_2, b_2]$, we can define a collection of functions $f_{1,y}:[a_1, b_1] \to \R$ through $f_{1,y}(x) = f(x,y)$. We have $f_{1,y} \in BV([a_1,b_1])$, hence, by a one-dimensional analog of the preceding developments, we obtain a unique Jordan decomposition
\[
f_{1,y} (x) = f_{1,y}(a_1) + f_{1,y}^+ (x) - f_{1,y}^- (x), \quad x \in [a_1,b_1]
\]
such that $f_{1,y}^\pm$ is non-decreasing with $f_{1,y}^\pm(a_1) = 0$ and $V(f_{1,y}, [a_1,b_1]) = V(f_{1,y}^+, [a_1,b_1])  + V(f_{1,y}^-, [a_1,b_1])$, where $V(f, [a_2, b_2])$ denotes the usual total variation of a real-valued function on a compact interval $[a_2, b_2]$. Attached is a unique signed measure $\nu_{1,y}$  such that
$
f_{1,y}(x) = \nu_{1,y}([a_1,x])$. Moreover, if $\nu_{1,y}=\nu_{1,y}^+ - \nu_{1,y}^-$ denotes the Jordan decomposition of $\nu_{1,y}$, then
\[
f_{1,y}^+(x) =  \nu_{1,y}^+((a_1,x] ) , \qquad  
f_{1,y}^-(x) =  \nu_{1,y}^-((a_1,x]  ) .  
\]
Note that the measure $\nu_{1,y}$ is related with $\nu$ by
\[
\nu_{1,y}([a_1,x])= \nu([a_1, x] \times [a_2, y]).
\]

The same arguments apply for the function $f_{2,x}:[a_2, b_2] \to \R$, defined through $f_{2,x}(y) = f(x,y)$, with $x\in [a_1, b_1]$ fixed. 
We will write $f(\mathrm dx, y)$ and $f(x, \mathrm dy)$ for $\nu_{1,y}(\mathrm dx)$ and $\nu_{2,x
}(\mathrm dy)$, respectively.

\begin{theorem}[Integration by parts] \label{theo:partialgen}
Let $\mu,\nu$ be finite signed measures on $[\vect a , \vect b]$ and, for $\vect x \in [\vect a , \vect b]$, write $f(\vect x) := \mu ([\vect a , \vect x])$, $g(\vect x) := \nu ([\vect a , \vect x])$.  Then, for any $(\vect c, \vect d] \subset [\vect a, \vect b]$ with $\vect c < \vect d$,
\begin{align*}
\int_{(\vect c , \vect d]} f \mathrm{d}g = & \int_{( \vect c, \vect d]} g \mathrm{d}f + \Delta(fg, c_1,c_2,d_1,d_2) \\
&- \int_{( c_1 ,d_1]} g(u ,d_2) f(\mathrm{d}u, d_2) + \int_{(c_1, d_1]}g(u ,c_2) f(\mathrm{d}u, c_2) \\
&- \int_{( c_2,d_2]} g(d_1,v) f(d_1,\mathrm{d}v) + \int_{(c_2, d_2]}g(c_1,v) f(c_1,\mathrm{d}v) \\
&+ \int_{( \vect c, \vect d]} \nu(\{u\}\times (v,d_2]) + \nu((u,d_1] \times\{v\}) + \nu(\{(u,v)\}) \mathrm{d}f(u,v)
\\
&+ \int_{( c_1 ,d_1]} \nu(\{u\}\times (c_2,d_2]) f(\mathrm{d}u, c_2)
\\
&+ \int_{( c_2 ,d_2]} \nu( (c_1,d_1]\times\{v\}) f(c_1,\mathrm{d}v) 
\end{align*}
\end{theorem}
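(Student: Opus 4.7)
The approach is Fubini's theorem applied to the product signed measure $\mu\otimes\nu$ together with a careful stratification of the integration region. Writing $f(\vect z)=\int\ind(\vect w\le\vect z)\,d\mu(\vect w)$ and exchanging the order of integration (permissible since $|\mu|,|\nu|$ are finite) yields
\[\int_{(\vect c,\vect d]}f\,d\nu=\int\nu\bigl(\{\vect z\in(\vect c,\vect d]:\vect w\le\vect z\}\bigr)\,d\mu(\vect w).\]
The plan is to partition the $\vect w$-domain into four strata according to the position of $\vect w$ relative to $\vect c$, compute the inner $\nu$-measure on each, and then integrate against $\mu$.

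The four strata are the corner $[\vect a,\vect c]$; the horizontal strip $(c_1,d_1]\times[a_2,c_2]$; the vertical strip $[a_1,c_1]\times(c_2,d_2]$; and the interior $(\vect c,\vect d]$ (points outside $[\vect a,\vect d]$ contribute nothing, the corresponding $\vect z$-region being empty). On the corner, $\vect w\le\vect z$ holds for every $\vect z\in(\vect c,\vect d]$, contributing $f(\vect c)\,\Delta(g,c_1,c_2,d_1,d_2)$ after integration. On each strip the $\vect z$-region is half-open with one closed endpoint, e.g.\ $[w_1,d_1]\times(c_2,d_2]$ for the horizontal strip; the splitting $[w_1,d_1]=(w_1,d_1]\cup\{w_1\}$ produces a $\Delta$-type term together with an atom term $\nu(\{w_1\}\times(c_2,d_2])$. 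On the interior, the $\vect z$-region is the full upper-right rectangle $[w_1,d_1]\times[w_2,d_2]$, which decomposes disjointly as
\[(w_1,d_1]\times(w_2,d_2]\;\sqcup\;\{w_1\}\times(w_2,d_2]\;\sqcup\;(w_1,d_1]\times\{w_2\}\;\sqcup\;\{(w_1,w_2)\},\]
so its $\nu$-measure equals $\Delta(g,w_1,w_2,d_1,d_2)$ plus the three interior atom corrections appearing in the statement.

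Assembly then proceeds by integrating each of these expressions against $\mu$. The key identifications are that the $w_1$-marginal of $\mu$ restricted to the horizontal strip equals $f(du,c_2)=\mu(du\times[a_2,c_2])$ (with the analogous statement for the vertical strip and $f(c_1,dv)$), while on the interior $(\vect c,\vect d]$ the corresponding projections give the increment measures $f(du,d_2)-f(du,c_2)$ in the $w_1$-direction and $f(d_1,dv)-f(c_1,dv)$ in the $w_2$-direction. The five atom integrals on the right-hand side of the statement then match one-to-one with the atom pieces from the three non-corner strata. For the $\Delta$-type pieces, expanding $\Delta(g,w_1,w_2,d_1,d_2)=g(\vect d)-g(w_1,d_2)-g(d_1,w_2)+g(\vect w)$ and integrating over the interior produces $\int_{(\vect c,\vect d]}g\,df$, together with $-\int_{(c_1,d_1]}g(u,d_2)f(du,d_2)$ and $-\int_{(c_2,d_2]}g(d_1,v)f(d_1,dv)$ and cross-products of boundary values of $g$ and $f$, which telescope with the strip and corner contributions.

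The main obstacle is purely bookkeeping: tracking open versus closed endpoints through Fubini so that every atom of $\mu$ on the lines $w_i=c_i$ and every atom of $\nu$ on the diagonal-type lines $z_i=w_i$ inside $(\vect c,\vect d]$ is counted exactly once with the correct sign. The decompositions $[w_i,d_i]=(w_i,d_i]\cup\{w_i\}$ are precisely what force the explicit atom-correction integrals of the statement and cannot be absorbed into the continuous $\Delta$-pieces. Once the strata are handled correctly, the four constant boundary values $(fg)(c_1,c_2)$, $(fg)(c_1,d_2)$, $(fg)(d_1,c_2)$, $(fg)(d_1,d_2)$ recombine into $\Delta(fg,c_1,c_2,d_1,d_2)$ by a short algebraic telescoping, completing the identity.
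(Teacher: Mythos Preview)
Your argument is correct and rests on the same core idea as the paper: represent $f$ as an integral against $\mu$, apply Fubini, and then carefully split the resulting closed rectangle $[w_1,d_1]\times[w_2,d_2]$ into its open part plus atom lines to isolate the $\nu$-atom corrections. The only difference is organizational: the paper first decomposes $f(u,v)$ relative to the anchor $\vect c$ (producing the remainder $R_1$), applies Fubini only to the interior double integral, and then routes the boundary pieces $R_1$, $R_2$ through an auxiliary one-dimensional integration-by-parts identity; you instead apply Fubini globally over $[\vect a,\vect b]$ and stratify the $\vect w$-domain into corner, two strips, and interior. Your route is slightly more direct since it avoids the detour through the 1D formula, while the paper's version makes the connection to the classical 1D result more explicit; the two computations produce identical collections of terms and telescope in the same way.
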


Before we give a proof of this general result, we recall that to any right-continuous $f,g \in BVHK([\vect a , \vect b])$ there correspond unique signed measures $\mu,\nu$, respectively. If either $f$ or $g$ is additionally continuous, the last three terms in the representation above vanish and we obtain the following result

\begin{corollary}[Integration by parts] 
Let $f,g \in BVHK([\vect a , \vect b])$ be right-continuous functions with either $f$ or $g$ continuous.  Then, for any $(\vect c, \vect d] \subset [\vect a, \vect b]$,
\begin{multline*}
\int_{(\vect c , \vect d]} f \mathrm{d}g = \int_{( \vect c, \vect d]} g \mathrm{d}f + \Delta(fg, c_1,c_2,d_1,d_2) \\
- \int_{( c_1 ,d_1]} g(u ,d_2) f(\mathrm{d}u, d_2) + \int_{(c_1, d_1]}g(u ,c_2) f(\mathrm{d}u, c_2) \\
- \int_{( c_2,d_2]} g(d_1,v) f(d_1,\mathrm{d}v) + \int_{(c_2, d_2]}g(c_1,v) f(c_1,\mathrm{d}v). 
\end{multline*}

\end{corollary}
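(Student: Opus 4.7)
The plan is to derive the corollary from Theorem~\ref{theo:partialgen} by showing that the three 2D ``extra'' integrands and their two marginal companions appearing on its right-hand side vanish (or can be rearranged away) under the additional continuity hypothesis. I handle the two cases separately.

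\textbf{Case 1: $g$ continuous.} I would show each of the three extra integrands is pointwise zero. Passing $\eps\downarrow 0$ in $\nu((\vect x, \vect y]) = \Delta(g,x_1,x_2,y_1,y_2)$ yields
\[
\nu(\{(u,v)\}) = g(u,v) - g(u-,v) - g(u,v-) + g(u-,v-),
\]
\[
\nu(\{u\}\times(v,d_2]) = [g(u,d_2)-g(u-,d_2)] - [g(u,v)-g(u-,v)],
\]
with a symmetric formula for $\nu((u,d_1]\times\{v\})$. Joint continuity of $g$ forces separate continuity in each variable, so all of these quantities vanish identically. The same computation also makes the integrands of the two marginal extra boundary integrals $\nu(\{u\}\times(c_2,d_2])$ and $\nu((c_1,d_1]\times\{v\})$ zero. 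Hence every extra term in Theorem~\ref{theo:partialgen} disappears and the corollary follows.

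\textbf{Case 2: $f$ continuous.} Rather than analyzing the same extras, I apply Theorem~\ref{theo:partialgen} with the roles of $f$ and $g$ exchanged. The three singleton-and-line integrands in the swapped formula now involve $\mu$ in place of $\nu$, and by the Case~1 computation (applied to $f$) they vanish pointwise because $f$ is continuous; the same holds for the two marginal extras. The swapped formula therefore reduces to
\begin{multline*}
\int_{(\vect c,\vect d]} g\,\mathrm df = \int_{(\vect c,\vect d]} f\,\mathrm dg + \Delta(gf,c_1,c_2,d_1,d_2) \\
- \int_{(c_1,d_1]} f(u,d_2)\,g(\mathrm du, d_2) + \int_{(c_1,d_1]} f(u,c_2)\,g(\mathrm du, c_2) \\
- \int_{(c_2,d_2]} f(d_1,v)\,g(d_1,\mathrm dv) + \int_{(c_2,d_2]} f(c_1,v)\,g(c_1,\mathrm dv).
\end{multline*}
Now the classical one-dimensional integration-by-parts formula gives, for each fixed edge value $y$,
\[
\int_{(c_1,d_1]} f(u,y)\,g(\mathrm du,y) + \int_{(c_1,d_1]} g(u,y)\,f(\mathrm du,y) = f(d_1,y)g(d_1,y) - f(c_1,y)g(c_1,y),
\]
with no jump-product correction because $u\mapsto f(u,y)$ is continuous; an analogous identity holds along the two vertical edges. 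Substituting these four identities into the displayed equation and solving for $\int f\,\mathrm dg$ yields the corollary.

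The one step to execute carefully is the sign bookkeeping in Case~2: the eight corner evaluations produced by the four 1D integration-by-parts substitutions combine with the sign-flipped $-\Delta(gf,c_1,c_2,d_1,d_2)$ (coming from the rearrangement) to give exactly $+\Delta(fg,c_1,c_2,d_1,d_2)$, using $\Delta(fg,\ldots) = \Delta(gf,\ldots)$, and the four remaining line integrals come out with the sign pattern $-,+,-,+$ required by the statement. This is a short but finicky calculation; no further analytic input beyond Theorem~\ref{theo:partialgen} and 1D integration by parts (with continuous integrator) is needed.
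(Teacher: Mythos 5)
Your proposal is correct, and it is worth comparing with the paper's treatment, which consists of the single remark preceding the corollary: ``if either $f$ or $g$ is additionally continuous, the last three terms in the representation above vanish.'' For the case $g$ continuous your Case~1 is exactly this argument made explicit: the formulas $\nu(\{u\}\times(v,d_2]) = [g(u,d_2)-g(u-,d_2)]-[g(u,v)-g(u-,v)]$ etc.\ show all five extra integrands are identically zero. For the case $f$ continuous, however, the paper's route would require showing that the integrals of the (generally nonzero) integrands against $\mathrm{d}f(u,v)$, $f(\mathrm{d}u,c_2)$ and $f(c_1,\mathrm{d}v)$ vanish; this is true but slightly delicate, since the integrands are supported on the countably many lines and atoms charged by $\nu$, and one must argue that the restriction of $\mu$ (and of its edge measures) to each such line is the zero signed measure — e.g.\ via a $\pi$--$\lambda$ argument from $\mu(\{u\}\times(v,v'])=0$. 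Your Case~2 sidesteps this entirely by swapping the roles of $f$ and $g$ in Theorem~\ref{theo:partialgen} (so that the extra integrands involve $\mu$ and vanish pointwise by continuity of $f$) and then restoring the asserted form via four applications of one-dimensional integration by parts along the edges. I checked the sign bookkeeping you flagged: the eight corner terms sum to $-2\Delta(fg,c_1,c_2,d_1,d_2)$, which combined with $+\Delta(gf,\cdot)=+\Delta(fg,\cdot)$ from the swapped formula and the final rearrangement yields exactly $+\Delta(fg,\cdot)$ and the sign pattern $-,+,-,+$ on the four edge integrals, as required. So your argument is a valid, somewhat longer but more self-contained alternative for the $f$-continuous case; the only external inputs are Theorem~\ref{theo:partialgen} and the classical 1D formula with a continuous integrand, both unobjectionable.
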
 

\begin{proof}[Proof of Theorem \ref{theo:partialgen}]
First of all, we use the definition of $f,g$ and obtain
\begin{align*}
\int_{(\vect c, \vect d ]} f(u,v) \mathrm{d}g(u,v)= \int_{(\vect c, \vect d ]} \int_{(c_1,u]\times ( c_2 , v]}\mathrm{d}f(x,y) \mathrm{d}g(u,v) + R_1,
\end{align*}
where $R_1=\int_{( \vect c, \vect d ]} f(u,c_2)+f(c_1,v)-f(c_1, c_2) \mathrm{d}g(u,v)$.
Now, Fubini's Theorem yields
\begin{align*}
& \int_{(\vect c, \vect d ]} \int_{(c_1,u]\times ( c_2 , v]}\mathrm{d}f(x,y) \mathrm{d}g(u,v)
\\ 
=& \int_{(\vect c, \vect d ]} \int_{[x,d_1]\times [y,d_2]}\mathrm{d}g(u,v) \mathrm{d}f(x,y)
\\
=& \int_{(\vect c, \vect d ]} \int_{(x,d_1]\times (y,d_2]}\mathrm{d}g(u,v) \mathrm{d}f(x,y)
\\
&+ \int_{( \vect c, \vect d]} \nu(\{x\}\times (y,d_2]) + \nu((x,d_1] \times\{y\}) + \nu(\{(x,y)\}) \mathrm{d}f(x,y)
\end{align*}
and
\[
\int_{(\vect c, \vect d ]} \int_{(x,d_1]\times (y,d_2]}\mathrm{d}g(u,v) \mathrm{d}f(x,y)
\\
= \int_{( \vect c ,\vect d]} g \mathrm{d}f + R_2
\]
with $R_2=\int_{(\vect c ,\vect d]} g(d_1, d_2)- g(u,d_2)-g(d_1, v) \mathrm{d}f(u,v)$. Summarizing, \begin{multline*}
\int_{(\vect c, \vect d]} f \mathrm dg = \int_{(\vect c, \vect d]} g \mathrm df + R_1 + R_2\\
+ \int_{( \vect c, \vect d]} \nu(\{u\}\times (v,d_2]) + \nu((u,d_1] \times\{v\}) + \nu(\{(u,v)\}) \mathrm{d}f(u,v),
\end{multline*}
whence it remains to consider $R_1$ and $R_2$.
Observe that
\[
\int_{( \vect c, \vect d ]} f(u,c_2) \mathrm{d}g(u,v) 
= \int_{(c_1, d_1]} f(u,c_2)g(\mathrm{d}u,d_2)- \int_{(c_1, d_1]} f(u ,c_2 )g(\mathrm{d}u,c_2) 
\]
and a similar identity holds for $\int_{(\vect c ,\vect d]} g(u,d_2) \mathrm{d}f(u,v)$. To see this, note that the identity holds for $f(u,c_2) = I_{(\alpha,\beta]}(u)$ with $c_1\leq \alpha < \beta \leq d_1$ arbitrary; the general claim then follows by algebraic induction. Next, observe the following one-dimensional formula for integration by parts
\begin{align*}
\int_{(c_1, d_1]} f(u,c_2)g(\mathrm{d}u,d_2)  =& -\int_{(c_1, d_1]} g(u,d_2)f(\mathrm{d}u,c_2) 
\\
& + \int_{(c_1, d_1]} \nu(\{u\}\times[a_2,d_2]) f(\mathrm{d}u,c_2)
\\
& +g(d_1, d_2 )f(d_1,c_2) - g(c_1,d_2)f(c_1, c_2).
\end{align*}
This formula can be proved by an application of the Fubini Theorem after writing $f(u,c_2) = \int_{(c_1, u]} f(\mathrm{d}x,c_2) + f(c_1,c_2)$ and by observing that $\nu_{1,d_2}(\{u\}) = \nu(\{u\} \times [a_2,d_2])$. Thus we have (apply a similar formula for integration by parts to all four integrals below)
\begin{align*}
R_1=& \int_{(c_1, d_1]} f(u,c_2)g(\mathrm{d}u,d_2)- \int_{(c_1, d_1]} f(u ,c_2 )g(\mathrm{d}u,c_2) \\
& +\int_{(c_2,d_2]} f(c_1,v) g(d_1,\mathrm{d}v)- \int_{(c_2,d_2]} f(c_1,v) g(c_1,\mathrm{d}v)
\\
& - f(\vect c) \Delta(g,c_1,c_2,d_1,d_2) \\
=& -\int_{(c_1, d_1]} g(u,d_2)f(\mathrm{d}u,c_2) +g(d_1, d_2)f(d_1,c_2) - g(c_1,d_2)f(c_1, c_2)
\\
& + \int_{(c_1, d_1]} \nu(\{u\}\times[a_2,d_2]) f(\mathrm{d}u,c_2)
\\
& + \int_{(c_1, d_1]} g(u ,c_2 )f(\mathrm{d}u,c_2) - g(d_1,c_2)f(d_1,c_2)+g(c_1, c_2)f(c_1, c_2)\\
& - \int_{(c_1, d_1]} \nu(\{u\}\times[a_2,c_2]) f(\mathrm{d}u,c_2)
\\
&- \int_{(c_2,d_2]} g(d_1,v) f(c_1,\mathrm{d}v) + g(d_1, d_2) f(c_1,d_2) - g(d_1, c_2)f(c_1, c_2)\\
& + \int_{( c_2 ,d_2]} \nu( [a_1,d_1]\times\{v\}) f(c_1,\mathrm{d}v) 
\\
& + \int_{(c_2,d_2]} g(c_1,v) f(c_1,\mathrm{d}v)- g(c_1,d_2)f(c_1,d_2) + g(c_1,c_2) f(c_1, c_2)
\\
& - \int_{( c_2 ,d_2]} \nu( [a_1,c_1]\times\{v\}) f(c_1,\mathrm{d}v) 
\\
&  - f(\vect c) \Delta(g,c_1,c_2,d_1,d_2) .
\end{align*}
For $R_2$ we obtain
\begin{align*}
R_2=&  - \int_{(c_1, d_1]} g(u,d_2)f(\mathrm{d}u,d_2) + \int_{(c_1, d_1]} g(u,d_2 )f(\mathrm{d}u,c_2) \\
& - \int_{(c_2,d_2]} g(d_1,v) f(d_1,\mathrm{d}v) + \int_{(c_2,d_2]} g(d_1,v) f(c_1,\mathrm{d}v)
\\
& + g(d_1, d_2) \Delta(f,c_1,c_2,d_1,d_2).
\end{align*}
The result follows after collecting terms.
\end{proof}

\begin{definition}[Locally bounded variation and Lebesgue-Stieltjes integration] \label{def:locbvhk}
Consider $f:(\vect a, \vect b) \to \R$ which is potentially unbounded. We say that $f$ is of \textit{locally bounded Hardy-Krause variation}, notationally $f \in BVHK_{loc}((\vect a, \vect b))$ if and only if $f|_{[\vect c , \vect d]} \in BVHK([\vect c, \vect d])$ for any $\vect a < \vect c < \vect d < \vect b$. In the following, $f$ is assumed to be right-continuous. Let $\vect a_n, \vect b_n$ be two sequences converging to $\vect a$ and $\vect b$, respectively, and such that $\vect a < \vect a_{n+1}<\vect a_{n} < \vect b_n < \vect b_{n+1} < \vect b$. Since $f|_{[\vect a_n, \vect b_n]} \in BVHK([\vect a_n, \vect b_n])$, we can define unique measures $\nu_n^+$ and $\nu_n^-$ on $\Bc([\vect a_n, \vect b_n])$ as in Theorem~\ref{theo:decomp2}. Now, for $A \in \Bc((\vect a, \vect b))$ set
\[
\nu^\pm(A) := \lim_{n\to\infty} \nu_n^\pm(A \cap (\vect a_n, \vect b_n]).
\]
By monotone convergence, $\nu^+$ and $\nu^-$ are $[0,\infty]$-valued measures on $\Bc((\vect a, \vect b))$. Moreover, by Proposition~\ref{prop:restrict} below, the definition of $\nu^\pm$ is independent of the choice of the sequences $\vect a_n$ and $\vect b_n$. Finally, for $\vect a < \vect c< \vect d < \vect b$, the proposition implies that
\[
\nu((c, d]) := \nu^+((\vect c, \vect d ]) - \nu^-((\vect c, \vect d ])  = \Delta(f,c_1,c_2,d_1,d_2).
\]
Note that $\nu$ is not necessarily a signed measure on $\Bc((\vect a, \vect b))$, since expressions of the form ``$\infty-\infty$'' are possible in principle. Still, for a measurable function $g:(\vect a, \vect b) \to \R$ such that $\int |g | \mathrm d\nu^+<\infty$ or $\int |g| \mathrm d\nu^- < \infty$, we may define the Lebesgue-Stieltjes integral 
\[
\int_{(\vect a, \vect b)} g \mathrm df := \int_{(\vect a, \vect b)}  g \mathrm d\nu 
:= \int_{(\vect a, \vect b)}  g \mathrm d \nu^+ - \int_{(\vect a, \vect b)}  g \mathrm d\nu^-
\]
\end{definition}

\begin{prop} \label{prop:restrict}
Let $f \in BVHK([\vect a,\vect b])$ be right-continuous and let $\vect a < \vect c < \vect d < \vect b$. Set $g := f|_{[\vect c, \vect d]}$. Then, for any $A \in \Bc((\vect c, \vect d])$,
\[
\nu^\pm_f(A)=\nu^\pm_g(A),
\]
where $\nu^\pm_f$ and $\nu^\pm_g$ denote the unique measures associated to the unique signed measure $\nu_f$ of $f$ and $\nu_g$ of $g$, respectively.
\end{prop}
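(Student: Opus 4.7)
The strategy will be to show first that $\nu_f$ and $\nu_g$ agree as signed measures when restricted to $\Bc((\vect c, \vect d])$, and then to invoke uniqueness of the Jordan decomposition to lift this equality to the positive and negative parts. The first step begins by checking the identity on half-open rectangles. Fix $\vect x, \vect y$ with $\vect c \le \vect x < \vect y \le \vect d$, and apply inclusion-exclusion to the decomposition $(\vect x, \vect y] = [\vect a, \vect y] \setminus ([\vect a, (x_1, y_2)] \cup [\vect a, (y_1, x_2)])$, which yields $\nu_f((\vect x, \vect y]) = \Delta(f, x_1, x_2, y_1, y_2)$; the analogous decomposition with anchor $\vect c$ in place of $\vect a$ gives $\nu_g((\vect x, \vect y]) = \Delta(g, x_1, x_2, y_1, y_2)$, and since $g=f$ on $[\vect c, \vect d]$ the two values coincide.

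Next I would promote this rectangle-by-rectangle agreement to the full Borel $\sigma$-algebra using Dynkin's $\pi$--$\lambda$ theorem. The family $\mathcal{R}$ of half-open rectangles contained in $(\vect c, \vect d]$ is a $\pi$-system generating $\Bc((\vect c, \vect d])$. Set $\mu_1 := \nu_f|_{\Bc((\vect c, \vect d])}$ and $\mu_2 := \nu_g|_{\Bc((\vect c, \vect d])}$; both are finite signed measures since Theorem~\ref{theo:decomp2} produces $\nu_f^\pm$ finite on $[\vect a, \vect b]$ and $\nu_g^\pm$ finite on $[\vect c, \vect d]$. The collection $\mathcal{L} := \{A \in \Bc((\vect c, \vect d]) : \mu_1(A) = \mu_2(A)\}$ is a $\lambda$-system: it contains $(\vect c, \vect d] \in \mathcal{R}$, is closed under complements relative to $(\vect c, \vect d]$ by finiteness of $\mu_1((\vect c, \vect d]) = \mu_2((\vect c, \vect d])$, and is closed under countable disjoint unions by $\sigma$-additivity. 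Dynkin's theorem then yields $\mu_1 = \mu_2$ on all of $\Bc((\vect c, \vect d])$.

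The final step, which is really the only subtle point, is the passage from the signed measures to their Jordan decompositions. Since $\nu_f^+$ and $\nu_f^-$ are mutually singular on $[\vect a, \vect b]$, their restrictions $\nu_f^+|_{(\vect c, \vect d]}$ and $\nu_f^-|_{(\vect c, \vect d]}$ remain mutually singular (the supporting sets simply intersect with $(\vect c, \vect d]$) and sum with signs to $\mu_1$, so they constitute the Jordan decomposition of $\mu_1$; likewise $\nu_g^\pm|_{(\vect c, \vect d]}$ is the Jordan decomposition of $\mu_2$. Uniqueness of the Jordan decomposition, applied to $\mu_1 = \mu_2$, then forces $\nu_f^\pm(A) = \nu_g^\pm(A)$ for every $A \in \Bc((\vect c, \vect d])$, as claimed.
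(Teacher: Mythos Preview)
Your proof is correct and follows essentially the same route as the paper's: agreement of $\nu_f$ and $\nu_g$ on half-open rectangles via the $\Delta$-formula, extension to all Borel sets, and then uniqueness of the Jordan decomposition. The paper compresses the $\pi$--$\lambda$ step into the phrase ``it suffices to show the identity on sets of the form $(\vect x,\vect y]$'' and leaves implicit the fact that restricting $\nu_f^\pm$ to $(\vect c,\vect d]$ preserves mutual singularity, whereas you spell both points out; apart from this level of detail the arguments coincide.
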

\begin{proof}
It suffices to show the identity on sets of the form $(\vect x, \vect y]\subset (\vect c, \vect d]$. By \eqref{eq:fnu}, we have
\[
\nu_f((\vect x,\vect y]) 
=
\Delta(f, x_1, x_2, y_1, y_2)
= \Delta(g, x_1, x_2, y_1, y_2)
=
\nu_g((\vect x, \vect y]).
\]
Uniqueness of the Jordan decomposition implies the assertion.
\end{proof}

\end{document}